\newtheorem{theorem}{Theorem}[section]
\newtheorem*{directp}{Theorem \ref{directp}}
\newtheorem*{dbundle}{Theorem \ref{dbundle}}
\newtheorem*{ncon}{Theorem \ref{ncon}}
\newtheorem*{hmmtype}{Theorem \ref{hmmtype}}
\newtheorem{pro}[theorem]{Proposition}
\newtheorem{conj}[theorem]{Conjecture}
\newtheorem{lem}[theorem]{Lemma}
\newtheorem{coro}[theorem]{Corollary}
\theoremstyle{definition}
\newtheorem{defi}[theorem]{Definition}
\newtheorem{ques}[theorem]{Question}
\newtheorem{exam}[theorem]{Example}
\newtheorem{remark}[theorem]{Remark}
\def\Sym{\mathrm{Sym}}
\def\Aut{\mathrm{Aut}}
\def\Cay{\mathrm{Cay}}
\newcommand{\Rmnum}[1]{\expandafter\@slowromancap\romannumeral #1@}
\begin{document}
\title{Graph product and the stability of circulant graphs}
\date{}

\author{Junyang Zhang}
\affil{\small School of Mathematical Sciences~\& Chongqing Key Lab of Cognitive Intelligence and Intelligent Finance, Chongqing Normal University, Chongqing 401331, P. R. China}

\maketitle

\renewcommand{\thefootnote}{\fnsymbol{footnote}}
 \footnotetext{E-mail addresses: jyzhang@cqnu.edu.cn (Junyang Zhang)}
\begin{abstract}
A graph $\Gamma$ is said to be stable if
$\mathrm{Aut}(\Gamma\times K_2)\cong\mathrm{Aut}(\Gamma)\times \mathbb{Z}_{2}$ and unstable  otherwise. If an unstable graph is connected, non-bipartite and any two of its distinct vertices have different neighbourhoods, then it is called nontrivially unstable. We establish conditions guaranteeing the instability of various graph products, including direct products, direct product bundles, Cartesian products, strong products, semi-strong products, and lexicographic products. Inspired by a condition for the instability of direct product bundles, we propose a new sufficient condition for circulant graphs to be unstable and refine existing instability conditions from the literature. Based on these results, we categorize unstable circulant graphs into two distinct types and further propose a classification framework.

\medskip
{\em Keywords:} direct product; stable graph; circulant graph

\end{abstract}

\section{Introduction}
Given a set $S$, a subset $R$ of $S\times S$ is called a \emph{binary relation} on $S$. We write $a\sim_R b$ (respectively, $a\nsim_R b$) to denote that $(a,b)\in R$ (respectively, $(a,b)\notin R$). For brevity, we refer to a binary relation simply as a relation in this paper. A relation $R$ on $S$ is said to be \emph{irreflexive} if $(a,a)\notin R$ for all $a\in S$. The dual relation $R^{\ast}$ of $R$ is defined as $R^{\ast}:=\{(a,b)\mid (b,a)\in R\}$. If $R^{\ast}=R$, then $R$ is called a \emph{symmetric} relation.

\medskip
Throughout this paper, all graphs are assumed to be finite, undirected, and simple. Thus, a graph $\Gamma$ is represented as an ordered pair $(V(\Gamma),E(\Gamma))$, where $V(\Gamma)$ is the vertex set and $E(\Gamma)$ is the edge set. Here, $E(\Gamma)$ is an irreflexive symmetric relation on $V(\Gamma)$. For convenience, we write $u\sim_{\Gamma} v$ instead of $u\sim_{E(\Gamma)} v$ to indicate that $(u,v)\in E(\Gamma)$. For each $v\in V(\Gamma)$, define
\begin{equation*}
\Gamma(v):=\{u\in V(\Gamma)\mid u\sim_{\Gamma} v\}~\mbox{and}~ \Gamma[v]:=\Gamma(v)\cup\{v\}.
\end{equation*}
The set $\Gamma(v)$ is called the \emph{neighbourhood} of $v$ in $\Gamma$. The \emph{automorphism group} of $\Gamma$, denoted by $\Aut(\Gamma)$, consists of all permutations of $V(\Gamma)$
that preserve adjacency of $\Gamma$. We use exponential notation for group actions: if $\alpha$ is a permutation of a set $S$ and $x\in S$, then $x^{\alpha}$ denotes the image of $x$ under $\alpha$. For undefined group theoretical and graph theoretical terminologies, we refer the reader to \cite{R1995} and \cite{BM2008}, respectively.

\medskip
The \emph{direct product} $\Gamma\times\Sigma$ of two graphs $\Gamma$ and $\Sigma$ is defined as a graph with vertex set $V(\Gamma)\times V(\Sigma)$ and $(a,x)\sim_{\Gamma\times\Sigma}(b,y)\Longleftrightarrow a\sim_{\Gamma}b~\mbox{and}~x\sim_{\Sigma}y$ for every pair of vertices
$(a,x),(b,y)\in V(\Gamma\times\Sigma)$. Note that an element $(\sigma,\rho)$ in the direct product $\Aut(\Gamma)\times\Aut(\Sigma)$ of the two groups $\Aut(\Gamma)$ and $\Aut(\Sigma)$ can be seen as a permutation on $V(\Gamma\times\Sigma)$ such that $(a,x)^{(\sigma,\rho)}=(a^{\sigma},x^{\rho})$ for all
$(a,x)\in V(\Gamma\times\Sigma)$. Since
\begin{align*}
  (a^{\sigma},x^{\rho})\sim_{\Gamma\times\Sigma}
  (b^{\sigma},y^{\rho})
  \Longleftrightarrow&
  a^{\sigma}\sim_{\Gamma}b^{\sigma}~\mbox{and}~
  x^{\rho}\sim_{\Sigma}y^{\rho} \\
  \Longleftrightarrow&
  a\sim_{\Gamma}b~\mbox{and}~x\sim_{\Sigma}y \\
  \Longleftrightarrow& (a,x)\sim_{\Gamma\times\Sigma} (b,y)
\end{align*}
for every pair of vertices $(a,x),(b,y)\in V(\Gamma\times\Sigma)$, we treat $\Aut(\Gamma)\times\Aut(\Sigma)$ as a subgroup of $\Aut(\Gamma\times \Sigma)$. The graph pair $(\Gamma,\Sigma)$ is said to be \emph{stable} if $\Aut(\Gamma\times \Sigma)=\Aut(\Gamma)\times\Aut(\Sigma)$ and \emph{unstable}  otherwise \cite[Definition 1.1.]{QXZZ2021}. In particular,  $\Gamma$ is said to be \emph{stable} (respectively, \emph{unstable}) if the graph pair $(\Gamma,K_2)$ is stable (respectively, unstable), where $K_2$ is the complete graph with two vertices. Recall that the concept of stability of a graph was first given by Maru\v si\v c et al. \cite{MSZ1989}.

\medskip
A graph is called \emph{$R$-thick} if there exists a pair of distinct vertices which have the same neighbourhood and \emph{$R$-thin} otherwise \cite{HIK2011}. An \emph{$R$-thin} graph is also said to be vertex-determining \cite{MSZ1989}, worthy \cite{W2008} or twin-free \cite{KL1996}. It is straightforward to check that a graph is unstable whenever it is disconnected, or $R$-thick, or a bipartite graph admitting a nontrivial automorphism. If an unstable graph is connected, $R$-thin and non-bipartite, then it is called \emph{nontrivially} unstable \cite{W2008}.

\medskip
The stability of graphs has received considerable attention in the literature. In \cite{MSZ1992}, Maru\v si\v c et al. extended the concept of Cayley graphs to that of generalized Cayley graphs and proved that every generalized Cayley graph which is not a Cayley graph is unstable. In \cite{NS1996}, it was shown by Nedela and \v Skoviera that the stability of graphs played an important role in finding regular embeddings of canonical double covers on orientable surfaces. In \cite{S2001}, Surowski
illustrated some methods for constructing arc-transitive nontrivially unstable graphs and constructed three infinite families of such graphs as applications.
In \cite{W2008}, Wilson demonstrated three sufficient conditions for a graph to be unstable, and constructed several infinite families of nontrivially unstable graphs.  Additional results on this topic can be found in  \cite{AKK2023,FH2022,HM2024,HMM2021,
HMM2023,HK2023,QXZ2019,QXZ2021,WM2021,WM2023}.

\medskip
Classifying all nontrivially unstable members within an infinite family of graphs is extremely difficult. Such a classification remains incomplete even for circulant graphs. In \cite[Theorems C.1--C.4]{W2008}, Wilson proposed four sufficient conditions for the instability of circulant graphs. Two of these conditions were found to be flawed and remedied by Qin et al. \cite{QXZ2019} and Hujdurovi\'c et al. \cite{HMM2021} respectively. Building on this, Hujdurovi\'c et al. \cite[Theorem 3.2, Proposition 3.7 and Proposition 3.12]{HMM2021} established novel sufficient conditions for the instability of circulant graphs. They proved that any circulant graph satisfying one of Wilson's conditions must also satisfy at least one of theirs, and demonstrated the existence of circulant graphs that satisfy their new conditions yet lie outside the scope of Wilson's framework.

\medskip
A key question in the study of circulant graphs is whether the conditions proposed by Hujdurovi\'c et al. characterize all nontrivially unstable cases. This problem can be approached in two ways: either by confirming the conditions for specific families of circulant graphs or by exploring new conditions that lead to the instability of circulant graphs.

\medskip
For the first approach, existing results show that no nontrivially unstable circulant graphs exist within arc-transitive families \cite{QXZ2019} or those of odd order
\cite{FH2022}. Additionally, all unstable circulant graphs of valency at most $7$ satisfy Wilson's conditions C.1 or C.4 \cite{HMM2023}. Recent work by Hujdurovi\'c and Kov\'acs \cite{HK2023} further demonstrates that nontrivially unstable circulant graphs of order twice a prime power must comply with the conditions in \cite[Theorem C.1]{W2008} or \cite[Proposition 3.7]{HMM2023}.

\medskip
For the second approach, Jiang and Zhang \cite{JZ2025} established a condition under which a Cayley graph is unstable. When applied to circulant graphs, this condition reduces to Proposition \ref{preserving}, serving as a generalization of one of a criteria of  Hujdurovi\'c et al (\cite[Proposition 3.12]{HMM2021}). However, we have not yet succeeded in using this condition to construct an unstable circulant graph that violates any of the conditions of Hujdurovi\'c et al.. Actually, through extensive computational efforts spanning several days,  Hujdurovi\'c et al. \cite[Remark 6.2]{HMM2021} identified all nontrivially unstable circulant graphs of order up to $50$ and confirmed that each of these graphs adheres to at least one of their proposed conditions.

\medskip
The lack of counterexamples with order $\leq 50$ or degree $\leq 7$ to the conditions proposed by Hujdurovi\'c et al. complicates computer-aided efforts to investigate the second approach (exploring new conditions).
To address this challenge, we focus on foundational graph constructions by investigating the stability of several kinds of graph products: direct products, direct product bundles, Cartesian products, strong products, semi-strong products, and lexicographic products. Through this framework, we develop multiple families of nontrivially unstable circulant graphs. Analysis of one such family reveals a new sufficient condition for instability in circulant graphs. Building on this, we refine the conditions proposed by Hujdurovi\'c et al. and formulate several conjectures regarding the stability of circulant graphs.

\medskip
Let $\Gamma$ and $\Sigma$ be two graphs. The \emph{Cartesian product} $\Gamma\Box\Sigma$, the \emph{strong product} $\Gamma\boxtimes\Sigma$, the \emph{semi-strong product} $\Gamma\ltimes\Sigma$ and the \emph{lexicographic product} $\Gamma[\Sigma]$ are all graphs with vertex set $V(\Gamma)\times V(\Gamma)$. Their adjacency relations are defined as follows: for any two vertices $(a,x),(b,y)\in V(\Gamma)\times V(\Gamma)$,
\begin{equation*}
(a,x)\sim_{\Gamma\Box\Sigma}(b,y)\Longleftrightarrow
  a=b~\mbox{and}~x\sim_{\Sigma}y~\mbox{or}~ a\sim_{\Gamma}b~\mbox{and}~x=y;
\end{equation*}
\begin{equation*}
(a,x)\sim_{\Gamma\boxtimes\Sigma}(b,y)\Longleftrightarrow a=b~\mbox{and}~x\sim_{\Sigma}y,~\mbox{or}~a\sim_{\Gamma}b ~\mbox{and}~x\sim_{\Sigma}y,~\mbox{or}~
  a\sim_{\Gamma}b~\mbox{and}~x=y;
\end{equation*}
\begin{equation*}
(a,x)\sim_{\Gamma\ltimes\Sigma}(b,y)\Longleftrightarrow
  a\sim_{\Gamma}b~\mbox{or}~a=b,~\mbox{and}~x\sim_{\Sigma}y;
\end{equation*}
\begin{equation*}
(a,x)\sim_{\Gamma[\Sigma]}(b,y)\Longleftrightarrow
  a\sim_{\Gamma}b,~\mbox{or}~a=b~\mbox{and}~x\sim_{\Sigma}y.
\end{equation*}

\medskip
A graph is \emph{prime} with respect to the direct product (respectively, Cartesian product) if it is nontrivial---meaning it has at least two vertices---and cannot be factored into a direct product (respectively, Cartesian product) of two nontrivial graphs.
Two graphs $\Gamma$ and $\Sigma$ are \emph{coprime}
with respect to the direct product (respectively, Cartesian product) if there exists no nontrivial graph
 $\Lambda$ such that $\Gamma=\Gamma_1\times \Lambda$ and $\Sigma=\Sigma_1\times \Lambda$ (respectively,  $\Gamma=\Gamma_1\Box \Lambda$ and $\Sigma=\Sigma_1\Box \Lambda$) for some graphs $\Gamma_1$ and $\Sigma_1$.

\medskip
Our results on direct product involve the concept of a Cartesian skeleton, which was first introduced non-algorithmically in \cite{HIK2009}. To exclude graphs with loops, we instead adopt the definition given in \cite{QXZ2019}.
As explained in \cite{QXZ2019}, this definition agrees with that in \cite{HIK2009}.
The \emph{Boolean square} $B(\Gamma)$ of a graph $\Gamma$ is the graph with vertex set $V(\Gamma)$ and
\begin{equation*}
u\sim_{B(\Gamma)}v\Longleftrightarrow u\neq v~\mbox{and}~\Gamma(u)\cap \Gamma(v)\ne \emptyset
\end{equation*}
for all $u,v\in V(\Gamma)$. An edge of $B(\Gamma)$ with ends $u$ and $v$ is said to be \emph{dispensable} with respect to $\Gamma$ if there exists $w\in V(\Gamma)$ such that
\begin{equation*}
  \Gamma(u)\cap \Gamma(v)\subsetneq \Gamma(u)\cap \Gamma(w)~\mbox{or}~
  \Gamma(u)\subsetneq\Gamma(w)\subsetneq \Gamma(v)
\end{equation*}
and
\begin{equation*}
  \Gamma(u)\cap \Gamma(v)\subsetneq \Gamma(v)\cap \Gamma(w)~\mbox{or}~
  \Gamma(v)\subsetneq\Gamma(w)\subsetneq \Gamma(u).
\end{equation*}
The \emph{Cartesian skeleton} $S(\Gamma)$ of $\Gamma$ is the spanning subgraph of $B(\Gamma)$ obtained by removing from $B(\Gamma)$ all dispensable edges with respect to $\Gamma$.

\medskip
We are now ready to state our main results of this paper. For direct product, we prove the following theorem.

\begin{theorem}
\label{directp}
Let $\Gamma$ and $\Sigma$ be two graphs. We have
\begin{enumerate}
  \item if $\Gamma$ or $\Sigma$ is unstable, then $\Gamma\times\Sigma$ is unstable;
  \item if $S(\Sigma)$ and $S(\Gamma)$ are coprime with respect to Cartesian product and $\Gamma$ and $\Sigma$ are both stable, then $\Gamma\times\Sigma$ is stable;
  \item if $S(\Gamma)$ and $S(\Sigma)$ are coprime with respect to Cartesian product and $\Gamma\times\Sigma$ is nontrivially unstable, then either $\Gamma$ or  $\Sigma$ is nontrivially unstable.
\end{enumerate}
\end{theorem}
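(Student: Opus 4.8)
The unifying tool is the Cartesian skeleton, via the factorisation $S(\Gamma\times\Sigma)=S(\Gamma)\Box S(\Sigma)$ together with $S(K_2)=2K_1$, so that $S(\Gamma\times\Sigma\times K_2)$ splits into two disjoint copies of $S(\Gamma)\Box S(\Sigma)$, one on each $K_2$-layer. Since the skeleton is built canonically from adjacency, every graph automorphism induces, via the \emph{same} vertex permutation, a skeleton automorphism; hence $\Aut(\Gamma\times\Sigma\times K_2)\le\Aut\bigl(S(\Gamma\times\Sigma\times K_2)\bigr)$. I would run the argument where the relevant factors are connected, non-bipartite and $R$-thin, treating the degenerate cases (a factor disconnected, bipartite, or $R$-thick) separately: disconnectedness and $R$-thickness force instability outright, an $R$-thick factor makes $\Gamma\times\Sigma$ $R$-thick, and non-bipartiteness of $\Gamma\times\Sigma$ forces both factors non-bipartite.

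For (i), I would exploit commutativity of the direct product: $(\Gamma\times\Sigma)\times K_2\cong(\Gamma\times K_2)\times\Sigma$. Given an unexpected $\alpha\in\Aut(\Gamma\times K_2)\setminus(\Aut(\Gamma)\times\Z_2)$, transport $\alpha\times 1_\Sigma$ through this isomorphism to an automorphism $\widetilde\alpha$ of $(\Gamma\times\Sigma)\times K_2$. Writing $(w,i)^\alpha=(a(w,i),b(w,i))$, a short coordinate comparison shows $\widetilde\alpha\in\Aut(\Gamma\times\Sigma)\times\Z_2$ only if $b(w,i)=i+\epsilon$ is independent of $w$ and $a(w,i)$ is independent of $i$; but these two conditions say precisely that $\alpha\in\Aut(\Gamma)\times\Z_2$, contradicting its choice. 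Hence $\widetilde\alpha$ is unexpected and $\Gamma\times\Sigma$ is unstable (one only needs $\Sigma$ nonempty for the transport to retain the witness).

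For (ii) and (iii), which share a common core, let $\alpha\in\Aut(\Gamma\times\Sigma\times K_2)$. As $\Gamma\times\Sigma$ is connected non-bipartite, $\Gamma\times\Sigma\times K_2$ is connected bipartite with its layers as the unique bipartition, so $\alpha$ preserves or swaps the layers; composing with the layer-swap $1_{\Gamma\times\Sigma}\times(\text{transposition})$, an expected automorphism, I may assume $\alpha$ preserves each layer. Since the skeleton components coincide with the layers, $\alpha$ restricts on layer $i$ to an automorphism of $S(\Gamma)\Box S(\Sigma)$. Invoking connectedness of $S(\Gamma),S(\Sigma)$ (valid for connected non-bipartite graphs), the unique prime factorisation of connected graphs over $\Box$, and the coprimality hypothesis, no prime factor of $S(\Gamma)$ can be transposed with one of $S(\Sigma)$, whence $\Aut\bigl(S(\Gamma)\Box S(\Sigma)\bigr)=\Aut(S(\Gamma))\times\Aut(S(\Sigma))$ and $\alpha|_{\text{layer }i}\colon(w,x)\mapsto(w^{\mu_i},x^{\nu_i})$. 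Feeding in the cross-layer edges, adjacency preservation reads
\begin{equation*}
(w\sim_\Gamma w')\wedge(x\sim_\Sigma x')\iff\bigl(w^{\mu_0}\sim_\Gamma (w')^{\mu_1}\bigr)\wedge\bigl(x^{\nu_0}\sim_\Sigma (x')^{\nu_1}\bigr)
\end{equation*}
for all $w,w',x,x'$; interpreting the two sides as an equality $A\times B=A'\times B'$ of ordered edge-sets and using that nonempty such products force $A=A'$ and $B=B'$, I obtain that $(\mu_0,\mu_1)$ defines a layer-preserving automorphism of $\Gamma\times K_2$ and $(\nu_0,\nu_1)$ one of $\Sigma\times K_2$. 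For (ii), stability of $\Gamma$ and of $\Sigma$ forces each to be expected, i.e.\ $\mu_0=\mu_1\in\Aut(\Gamma)$ and $\nu_0=\nu_1\in\Aut(\Sigma)$, so $\alpha$ is expected and $\Gamma\times\Sigma$ is stable. For (iii), if $\alpha$ is unexpected then $(\mu_0,\nu_0)\ne(\mu_1,\nu_1)$, so $\mu_0\ne\mu_1$ or $\nu_0\ne\nu_1$; the corresponding nontrivial layer-preserving automorphism exhibits $\Gamma$ or $\Sigma$ as unstable, and since that factor is connected, non-bipartite and $R$-thin, it is nontrivially unstable.

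The main obstacle I anticipate is the foundational layer: rigorously justifying $S(\Gamma\times\Sigma\times K_2)=S(\Gamma)\Box S(\Sigma)\Box S(K_2)$ with the bipartite factor $K_2$ present, the connectedness of $S(\Gamma),S(\Sigma)$, and the passage from coprimality to the direct-product splitting of $\Aut\bigl(S(\Gamma)\Box S(\Sigma)\bigr)$. A second delicate point, specific to (ii), is the reduction to the connected non-bipartite case, since stability by itself permits an asymmetric \emph{bipartite} factor whose skeleton is disconnected; I must verify that under the coprimality hypothesis such configurations either do not arise or can be disposed of directly. Once these structural facts are secured, the layer bookkeeping and the set-product separation are routine.
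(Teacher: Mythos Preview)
Your argument is correct and follows the same skeleton-factorisation strategy as the paper, but the paper packages everything through TF-morphisms of $\Gamma\times\Sigma$ rather than automorphisms of $\Gamma\times\Sigma\times K_2$. Concretely, for (ii)--(iii) the paper takes an arbitrary TF-morphism $(\alpha,\beta)$ of $\Gamma\times\Sigma$, invokes the lemma (from \cite{QXZ2019}) that each component of a TF-morphism already lies in $\Aut(S(\Gamma\times\Sigma))$, applies $S(\Gamma\times\Sigma)=S(\Gamma)\Box S(\Sigma)$ together with coprimality to write $\alpha=(\alpha_1,\alpha_2)$ and $\beta=(\beta_1,\beta_2)$, and then reads off from the product adjacency that $(\alpha_1,\beta_1)$ and $(\alpha_2,\beta_2)$ are TF-morphisms of $\Gamma$ and $\Sigma$---exactly your $(\mu_0,\mu_1)$ and $(\nu_0,\nu_1)$ under the usual dictionary between TF-morphisms and layer-preserving automorphisms of the double cover. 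The payoff of the TF-morphism formulation is that the $K_2$ factor never enters the skeleton computation, so your first ``main obstacle'' (the three-factor formula $S(\Gamma\times\Sigma\times K_2)=S(\Gamma)\Box S(\Sigma)\Box S(K_2)$ with a bipartite factor present) simply evaporates: only the two-factor identity for $R$-thin graphs without isolated vertices is needed. For (i) the paper takes a slightly different route, splitting on whether the unstable factor is bipartite (then $\Gamma\times\Sigma$ is bipartite with a nontrivial automorphism inherited from that factor) or non-bipartite (then lift a nontrivial TF-morphism coordinatewise to $\Gamma\times\Sigma$); your transport argument through $(\Gamma\times K_2)\times\Sigma\cong(\Gamma\times\Sigma)\times K_2$ is an equally valid alternative that avoids the case split.

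On your second flagged concern: the paper's proof of (ii) opens with ``Since $\Gamma$ and $\Sigma$ are stable, they are connected, non-bipartite, and $R$-thin'', so it makes exactly the reduction you are worried about, without further comment.
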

Theorem \ref{directp} (iii) generalizes
\cite[Lemma 4.3]{QXZ2019}, while the proof of Theorem \ref{directp} (ii) relies on a result by Qin et al. concerning the Cartesian skeleton of a graph (see Lemma \ref{tfsk}).

\medskip
Our next two theorems require the concepts of two-fold morphism (TF-morphism) and two-fold semi-morphism (TFS-morphism). Recall that the concept of a two-fold morphism was first introduced in \cite{LMS2014}, where it was termed a two-fold automorphism.
\begin{defi}
Let $\Gamma$ be a graph and $\alpha$ and $\beta$ be two permutations on $V(\Gamma)$.
\begin{enumerate}
  \item If $u\sim_{\Gamma}v$ implies $u^\alpha\sim_{\Gamma}v^\beta$ for any two vertices $u,v\in V(\Gamma)$, then we call the ordered pair $(\alpha,\beta)$ a \emph{two-fold morphism} of $\Gamma$.
  \item  If $u^\alpha\sim_{\Gamma}u^\beta$ for every $u\in V(\Gamma)$ and  $u^\alpha\sim_{\Gamma}v^\beta$ or $u^\alpha=v^\beta$ for each pair of adjacent vertices $u$ and $v$ of $\Gamma$, then we call the ordered pair $(\alpha,\beta)$ a \emph{two-fold semi-morphism} of $\Gamma$.
\end{enumerate}
\end{defi}
\begin{remark}
\label{rtfs}
\begin{enumerate}
  \item Obviously, $(\alpha,\alpha)$ is a TF-morphism of $\Gamma$ if and only if $\alpha\in \Aut(\Gamma)$. A TF-morphism $(\alpha,\beta)$ of $\Gamma$ is said to be \emph{nontrivial} if $\alpha\neq \beta$.
  \item Let $E^{\circ}=\{(u,v)\mid u=v~\mbox{or}~u\sim_{\Gamma}v,~u,v\in V(\Gamma)\}$.
If $(\alpha,\beta)$ is a TFS-morphism of $\Gamma$, then $(\alpha,\beta)$ preserves $E^{\circ}$ and the restriction of $(\alpha,\beta)$ to $E^{\circ}$ is injective. Since $E^{\circ}$ is finite, we have that $(\alpha,\beta)$ acts as a permutation on $E^{\circ}$. It follows that
 \begin{equation*}
u\sim_{\Gamma}v\Longleftrightarrow u^\alpha=v^\beta~\mbox{or}~
u^\alpha\sim_{\Gamma}v^\beta
\end{equation*}
for any two distinct vertices $u,v\in V(\Gamma)$.

Similarly, if $(\alpha,\beta)$ is a TF-morphism of $\Gamma$, then $(\alpha,\beta)$ acts as a permutation on the set $\{(u,v)\mid u\sim_{\Gamma}v,~u,v\in V(\Gamma)\}$. Thus
$u\sim_{\Gamma}v\Longleftrightarrow u^\alpha\sim_{\Gamma}v^\beta$ for any two distinct vertices $u,v\in V(\Gamma)$.
\end{enumerate}
\end{remark}

\medskip
Our second theorem is dedicated to direct product  bundles of two graphs $\Gamma$ and $\Sigma$, which are a variant of direct product graphs. Let $p$ be a mapping from $V(\Gamma)\times V(\Gamma)$ to $\Aut(\Sigma)$ such that $p(a,b)=p(b,a)^{-1}$ for every $(a,b)\in V(\Gamma)\times V(\Gamma)$. Then the \emph{direct product bundle} $\Gamma\times^{p}\Sigma$ is the graph with vertex set $V(\Gamma)\times V(\Sigma)$ such that $(a,x)\sim_{\Gamma\times^{p}\Sigma}(b,y)\Longleftrightarrow
  a\sim_{\Gamma}b~\mbox{and}~x\sim_{\Sigma}y^{p(a,b)^{-1}}$ for all $(a,x),(b,y)\in V(\Gamma)\times V(\Sigma)$ \cite{KM1995}.

\begin{theorem}
\label{dbundle}
Let $\Gamma$ and $\Sigma$ be two graphs and $p$ be a mapping from $V(\Gamma)\times V(\Gamma)$ to $\Aut(\Sigma)$ such that $p(a,b)=p(b,a)^{-1}$. Then  $\Gamma\times^{p}\Sigma$ is unstable if one of the following two statements holds:
\begin{enumerate}
  \item $\Gamma$ has a nontrivial TF-morphism $(\alpha,\beta)$ such that
  $p(a^{\alpha},b^{\beta})=p(a,b)$ for every $(a,b)\in V(\Gamma)\times V(\Gamma)$;
  \item there is a TF-morphism $(\alpha,\beta)$ of $\Sigma$ and a permutation $\theta$ on $V(\Sigma)$ such that $\alpha\neq\theta$ and $\theta p(a,b)^{-1}=p(a,b)^{-1}\beta$ for every $(a,b)\in V(\Gamma)\times V(\Gamma)$.
\end{enumerate}
\end{theorem}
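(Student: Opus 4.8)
The plan is to prove both parts by exhibiting a nontrivial two-fold automorphism of $\Delta:=\Gamma\times^{p}\Sigma$ and invoking the standard translation between two-fold automorphisms and the automorphisms of the canonical double cover $\Delta\times K_2$. Recall that the vertices of $\Delta\times K_2$ are $V(\Delta)\times\{0,1\}$ with $(u,i)\sim(v,j)$ iff $u\sim_{\Delta}v$ and $i\neq j$. Given any pair of permutations $(\phi,\psi)$ of $V(\Delta)$ satisfying $u\sim_{\Delta}v\Leftrightarrow u^{\phi}\sim_{\Delta}v^{\psi}$ for all $u,v$, the map $\zeta$ preserving each layer and acting as $\phi$ on layer $0$ and as $\psi$ on layer $1$ is an automorphism of $\Delta\times K_2$; moreover $\zeta$ lies in $\Aut(\Delta)\times\Z_2$ only if it is layer-preserving of the form $(\rho,\mathrm{id})$, which forces $\phi=\rho=\psi$. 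Hence it suffices to produce such a pair with $\phi\neq\psi$. As in Remark \ref{rtfs}, because $\phi,\psi$ are permutations of the finite set $V(\Delta)$, it is enough to verify the single implication $u\sim_{\Delta}v\Rightarrow u^{\phi}\sim_{\Delta}v^{\psi}$; a counting argument then upgrades it to the full equivalence (in particular it automatically yields the diagonal condition $u^{\phi}\not\sim_{\Delta}u^{\psi}$, so the $a=b$ cases need no separate treatment).

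For statement (i), I would lift $(\alpha,\beta)$ to the first coordinate: define $\hat\alpha,\hat\beta$ on $V(\Delta)$ by $(a,x)^{\hat\alpha}=(a^{\alpha},x)$ and $(a,x)^{\hat\beta}=(a^{\beta},x)$. To check the required implication, suppose $(a,x)\sim_{\Delta}(b,y)$, i.e.\ $a\sim_{\Gamma}b$ and $x\sim_{\Sigma}y^{p(a,b)^{-1}}$. Since $(\alpha,\beta)$ is a TF-morphism of $\Gamma$ we get $a^{\alpha}\sim_{\Gamma}b^{\beta}$, and the hypothesis $p(a^{\alpha},b^{\beta})=p(a,b)$ gives $y^{p(a^{\alpha},b^{\beta})^{-1}}=y^{p(a,b)^{-1}}$, whence $x\sim_{\Sigma}y^{p(a^{\alpha},b^{\beta})^{-1}}$. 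Together these say $(a^{\alpha},x)\sim_{\Delta}(b^{\beta},y)$, i.e.\ $(a,x)^{\hat\alpha}\sim_{\Delta}(b,y)^{\hat\beta}$. Nontriviality is immediate: $\alpha\neq\beta$ gives some $a$ with $a^{\alpha}\neq a^{\beta}$, so $\hat\alpha\neq\hat\beta$.

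For statement (ii), I would instead act on the second coordinate, using $\alpha$ on one layer and $\theta$ on the other: set $(a,x)^{\Phi}=(a,x^{\alpha})$ and $(a,x)^{\Psi}=(a,x^{\theta})$. Assume $(a,x)\sim_{\Delta}(b,y)$, so $a\sim_{\Gamma}b$ and $x\sim_{\Sigma}y^{p(a,b)^{-1}}$; the first coordinate is untouched, so it remains to show $x^{\alpha}\sim_{\Sigma}(y^{\theta})^{p(a,b)^{-1}}$. Here is the crux: the relation $\theta p(a,b)^{-1}=p(a,b)^{-1}\beta$ rewrites $(y^{\theta})^{p(a,b)^{-1}}=y^{\theta p(a,b)^{-1}}=y^{p(a,b)^{-1}\beta}=\bigl(y^{p(a,b)^{-1}}\bigr)^{\beta}$, after which the TF-morphism property of $(\alpha,\beta)$ applied to $x\sim_{\Sigma}y^{p(a,b)^{-1}}$ yields exactly $x^{\alpha}\sim_{\Sigma}\bigl(y^{p(a,b)^{-1}}\bigr)^{\beta}$. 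Thus $(\Phi,\Psi)$ satisfies the required implication, and $\alpha\neq\theta$ makes it nontrivial.

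The routine parts are the two verifications above; the step demanding the most care is the first paragraph, namely pinning down the equivalence ``nontrivial two-fold automorphism $\Rightarrow$ instability'' and, within it, justifying that checking a single implication suffices and that the constructed $\zeta$ genuinely escapes $\Aut(\Delta)\times\Z_2$ rather than coinciding with a layer-swapping element. In part (ii) one must also respect the right-action convention so that the conjugation identity $\theta p(a,b)^{-1}=p(a,b)^{-1}\beta$ is applied in the correct order; reversing it would spoil the computation.
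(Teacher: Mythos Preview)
Your proof is correct and follows essentially the same approach as the paper: in case (i) you construct the TF-morphism $((\alpha,1),(\beta,1))$ and in case (ii) the TF-morphism $((1,\alpha),(1,\theta))$ of $\Gamma\times^{p}\Sigma$, exactly as the paper does, and then conclude instability. The only difference is cosmetic: the paper invokes Lemma~\ref{TF} directly, whereas you spell out the translation to $\Aut(\Delta\times K_2)$ by hand, which has the minor advantage of not implicitly appealing to the connected non-bipartite hypotheses of that lemma.
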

The complement $\overline{\Gamma}$ of a nontrivial graph $\Gamma$ is the graph with vertex set $V(\Gamma)$ such that $u\sim_{\overline{\Gamma}}v$ if and only if $u\neq v$ and $u\nsim_{\Gamma}v$ for all $u,v\in V(\Gamma)$. For other kinds of graph products, we have the following theorem.

\begin{theorem}
\label{others}
Let $\Gamma$ and $\Sigma$ be two connected nontrivial graphs. Then we have
\begin{enumerate}
\item if one of $\Gamma$ and $\Sigma$ is bipartite and the other has a nontrivial TF-morphism, then $\Gamma\Box\Sigma$ is unstable;
 \item if one of $\Gamma$ and $\Sigma$ is bipartite and the other is $R$-thick, then $\Gamma\boxtimes\Sigma$ is unstable;
  \item if $\Gamma$ or $\Sigma$ has a TFS-morphism, then $\overline{\Gamma\boxtimes\Sigma}$ is unstable;
  \item if $\Sigma$ is unstable or $\Gamma$ has a TFS-morphism, then $\Gamma\ltimes\Sigma$ is unstable;
  \item if $\Sigma$ has a nontrivial TF-morphism, then $\Gamma[\Sigma]$ is unstable.
\end{enumerate}
\end{theorem}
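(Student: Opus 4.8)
The backbone of my argument is the (by now standard) correspondence between nontrivial TF-morphisms and instability. For any graph $\Lambda$, regard $V(K_2)=\{0,1\}$ with $0\sim 1$, so that $(u,i)\sim_{\Lambda\times K_2}(v,j)$ iff $u\sim_{\Lambda}v$ and $i\neq j$. A pair of permutations $(\alpha,\beta)$ of $V(\Lambda)$ is a TF-morphism exactly when the fibre-preserving map $\tau_{\alpha,\beta}\colon(u,0)\mapsto(u^{\alpha},0),\ (u,1)\mapsto(u^{\beta},1)$ is an automorphism of $\Lambda\times K_2$, and $\tau_{\alpha,\beta}\in\Aut(\Lambda)\times\Z_2$ precisely when $\alpha=\beta\in\Aut(\Lambda)$. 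Hence $\Lambda$ is unstable if and only if it admits a \emph{nontrivial} TF-morphism (one with $\alpha\neq\beta$); in particular the unstable $\Sigma$ of part (iv) supplies such a pair. My plan is therefore to exhibit, in each case, a nontrivial TF-morphism of the product graph. I will use two facts repeatedly: a TFS-morphism $(\alpha,\beta)$ automatically has $\alpha\neq\beta$, since $u^{\alpha}\sim_{\Gamma}u^{\beta}$ forbids $u^{\alpha}=u^{\beta}$; and by Remark \ref{rtfs} a TF-morphism (resp.\ TFS-morphism) acts as a permutation on the edge set (resp.\ on $E^{\circ}$), so it suffices to check that edges map to edges.

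The lexicographic and semi-strong cases are the most direct. For part (v), given a nontrivial TF-morphism $(\alpha,\beta)$ of $\Sigma$, I set $(a,x)^{\alpha'}=(a,x^{\alpha})$ and $(a,x)^{\beta'}=(a,x^{\beta})$: along an edge with $a\sim_{\Gamma}b$, adjacency in $\Gamma[\Sigma]$ is insensitive to the second coordinate, while along an edge with $a=b$, $x\sim_{\Sigma}y$ the TF-morphism property yields $x^{\alpha}\sim_{\Sigma}y^{\beta}$; nontriviality follows from $\alpha\neq\beta$. For part (iv), if $\Sigma$ is unstable the same fibrewise recipe works because the first-coordinate condition ``$a\sim_{\Gamma}b$ or $a=b$'' is preserved identically. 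If instead $\Gamma$ carries a TFS-morphism $(\alpha,\beta)$, I act on the \emph{first} coordinate, $(a,x)^{\alpha'}=(a^{\alpha},x)$, $(a,x)^{\beta'}=(a^{\beta},x)$: the first-coordinate relation of $\Gamma\ltimes\Sigma$ is exactly the reflexive closure $E^{\circ}$ of $\Gamma$, which the TFS-morphism permutes, so from $(a,b)\in E^{\circ}$ one gets $(a^{\alpha},b^{\beta})\in E^{\circ}$ while $x\sim_{\Sigma}y$ is untouched, and $\alpha\neq\beta$ gives nontriviality.

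The Cartesian and strong cases both exploit a proper $2$-colouring $\ve\colon V(\Gamma)\to\{0,1\}$ of the bipartite factor $\Gamma$, using a ``parity twist''. For part (i), with a nontrivial TF-morphism $(\alpha,\beta)$ of $\Sigma$, I would put
\[
(a,x)^{\alpha'}=\begin{cases}(a,x^{\alpha}),&\ve(a)=0,\\(a,x^{\beta}),&\ve(a)=1,\end{cases}
\qquad
(a,x)^{\beta'}=\begin{cases}(a,x^{\beta}),&\ve(a)=0,\\(a,x^{\alpha}),&\ve(a)=1.\end{cases}
\]
Along a $\Sigma$-edge ($a=b$, $x\sim_{\Sigma}y$) both endpoints share the colour $\ve(a)$ and the TF-morphism property applies; along a $\Gamma$-edge ($a\sim_{\Gamma}b$, $x=y$) bipartiteness forces $\ve(a)\neq\ve(b)$, so $\alpha'$ at $a$ and $\beta'$ at $b$ apply the \emph{same} permutation to the common second coordinate, keeping the image a $\Gamma$-edge; nontriviality follows from $\alpha\neq\beta$. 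For part (ii), $R$-thickness of $\Sigma$ furnishes false twins $y_1,y_2$, hence the transposition $\tau=(y_1\,y_2)\in\Aut(\Sigma)$, and $(\mathrm{id},\tau)$ is already a nontrivial TF-morphism of $\Sigma$. I run the same twist with $\alpha=\mathrm{id}$, $\beta=\tau$, i.e.\ apply $\tau^{\ve(a)}$ under $\alpha'$ and $\tau^{1-\ve(a)}$ under $\beta'$. The extra edges of the strong product, where $a\sim_{\Gamma}b$ \emph{and} $x\sim_{\Sigma}y$, are exactly where I must use that $\tau$ is a genuine automorphism rather than merely a TF-morphism, since there both coordinates move and I need $x^{\tau}\sim_{\Sigma}y^{\tau}$; the remaining edge types are handled as in part (i). Because $R$-thickness is in general \emph{not} inherited by $\Gamma\boxtimes\Sigma$, this explicit construction, rather than an appeal to ``$R$-thick implies unstable'', is what makes the case work, and I expect it to be the main obstacle.

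Finally, for part (iii) I first rewrite the complement's adjacency: $(a,x)\sim_{\overline{\Gamma\boxtimes\Sigma}}(b,y)$ holds iff $(a,b)\notin E^{\circ}$ of $\Gamma$ or $(x,y)\notin E^{\circ}$ of $\Sigma$ (each disjunct already forces $(a,x)\neq(b,y)$). Assuming, by commutativity of $\boxtimes$ and of complementation, that $\Sigma$ has a TFS-morphism $(\alpha,\beta)$, I act fibrewise, $(a,x)^{\alpha'}=(a,x^{\alpha})$, $(a,x)^{\beta'}=(a,x^{\beta})$. If the $\Gamma$-disjunct holds it is left untouched; otherwise $(x,y)\notin E^{\circ}$ of $\Sigma$, and since a TFS-morphism permutes $E^{\circ}$ it also permutes the complement of $E^{\circ}$, giving $(x^{\alpha},y^{\beta})\notin E^{\circ}$ of $\Sigma$ and hence the $\Sigma$-disjunct for the image. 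As $\alpha\neq\beta$ for any TFS-morphism, $(\alpha',\beta')$ is a nontrivial TF-morphism of $\overline{\Gamma\boxtimes\Sigma}$, so the complement is unstable. The one delicate point here is matching the complement's adjacency with the $E^{\circ}$-permutation property of Remark \ref{rtfs}, which is precisely why TFS-morphisms, and not TF-morphisms, are the correct hypothesis in this part.
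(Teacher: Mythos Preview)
Your approach is essentially the paper's: in each of (i)--(v) the paper constructs exactly the TF-morphisms you describe (the parity-twist for (i) and (ii), the fibrewise action for (iii), (iv) and (v)), and your handling of (iii) via the $E^{\circ}$-description of $\overline{\Gamma\boxtimes\Sigma}$ is just an inlining of the paper's Lemma~\ref{tfaa}.

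There is, however, one genuine gap. Your opening claims that \emph{any} graph $\Lambda$ is unstable if and only if it admits a nontrivial TF-morphism, but your argument only shows that the fibre-preserving automorphisms $\tau_{\alpha,\beta}$ lie in $\Aut(\Lambda)\times\Z_2$ exactly when $\alpha=\beta$; it does not show that every automorphism of $\Lambda\times K_2$ is of the form $\tau_{\alpha,\beta}$ or its composite with the swap. That last step requires $\Lambda$ to be connected and non-bipartite, and the paper explicitly warns that the equivalence fails otherwise ($K_2$ is unstable yet has no nontrivial TF-morphism). The forward direction ``nontrivial TF-morphism $\Rightarrow$ unstable'' is unconditional, so parts (i), (ii), (iii), (v), and the TFS-case of (iv) are unaffected. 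The problem is solely in part (iv), case ``$\Sigma$ unstable'': if $\Sigma$ is connected and bipartite you cannot extract a nontrivial TF-morphism of $\Sigma$, and your fibrewise recipe has nothing to feed on. The paper closes this gap via Lemma~\ref{product}: when $\Sigma$ is connected, bipartite and unstable it has a nontrivial automorphism $\sigma$, so $(1,\sigma)$ is a nontrivial automorphism of the bipartite graph $\Gamma\ltimes\Sigma$, which is therefore unstable. Adding this one-line bipartite case to your argument for (iv) makes the proof complete.
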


\medskip
We demonstrate that Theorems \ref{directp}, \ref{dbundle} and \ref{others} yield infinitely many nontrivially unstable circulant graphs. Through analyzing  a specific family of these graphs, we derive a sufficient condition for the instability of circulant graphs, which is presented as follows.
\begin{theorem}
\label{ncon}
Let $\Gamma=\Cay(\mathbb{Z}_{2m},S)$ be a circulant graph. If $\Cay\big(\mathbb{Z}_{2m},(S\setminus\{m\})+m\big)$ has an automorphism fixing $0$ but moving $m$, then $\Gamma$ is unstable.
\end{theorem}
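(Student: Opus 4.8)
The plan is to produce an automorphism of the canonical double cover $\Gamma\times K_2$ lying outside $\Aut(\Gamma)\times\mathbb{Z}_2$, which is exactly what instability requires. I will obtain one from a two-fold morphism $(\alpha,\beta)$ of $\Gamma$ whose components are permutations with $\alpha\neq\beta$. Indeed, such a pair defines a sheet-preserving bijection $g$ of $V(\Gamma\times K_2)=\mathbb{Z}_{2m}\times\mathbb{Z}_2$ by $(a,0)^{g}=(a^{\alpha},0)$ and $(b,1)^{g}=(b^{\beta},1)$; since the edges of the bipartite double cover run between the two sheets and are governed by membership of the difference in $S$, the two-fold morphism condition makes $g$ map edges to edges, so $g\in\Aut(\Gamma\times K_2)$ by finiteness. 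Because the sheet-preserving members of $\Aut(\Gamma)\times\mathbb{Z}_2$ are precisely those with $\alpha=\beta$, the requirement $\alpha\neq\beta$ guarantees $g\notin\Aut(\Gamma)\times\mathbb{Z}_2$, hence $\Gamma$ is unstable.

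Letting $t\colon x\mapsto x+m$ be the (involutory, since $2m=0$) translation of $\mathbb{Z}_{2m}$, the candidate pair is $(\alpha,\beta)=(\phi,\,t\phi t)$, where $\phi$ is the hypothesized automorphism of $\Gamma'=\Cay\big(\mathbb{Z}_{2m},(S\setminus\{m\})+m\big)$ fixing $0$ and moving $m$. The idea, mirroring the twist used in Theorem \ref{dbundle}(ii), is that conjugating $\phi$ by $t$ on the second sheet compensates for the shift by $m$ relating the connection sets of $\Gamma$ and $\Gamma'$.

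First I would record the identity $(S\setminus\{m\})+m=(S+m)\setminus\{0\}$ and restate $\phi\in\Aut(\Gamma')$ as $u-v\in(S+m)\setminus\{0\}\Leftrightarrow u^{\phi}-v^{\phi}\in(S+m)\setminus\{0\}$ for all distinct $u,v$. Second, substituting $v=w+m=w^{t}$ and using $v^{\phi}+m=w^{t\phi t}$ turns this, for all $w$ with $u-w\neq m$, into $u-w\in S\Leftrightarrow u^{\phi}-w^{t\phi t}\in S$, which is exactly the two-fold morphism condition for $(\phi,t\phi t)$ away from difference $m$. Third, I would settle the difference-$m$ case by direct computation: when $u-w=m$ one finds $u^{\phi}-w^{t\phi t}=-m=m$, so both sides of the equivalence reduce to the single statement ``$m\in S$'' and therefore agree, whether or not $m\in S$. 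Thus $u\sim_{\Gamma}v\Rightarrow u^{\phi}\sim_{\Gamma}v^{t\phi t}$ for every edge, and since $\Gamma$ is finite, Remark \ref{rtfs} upgrades this to a two-fold automorphism of $\Gamma$. Finally, nontriviality follows because $\phi=t\phi t$ would force $\phi$ to commute with $t$, whence $m^{\phi}=(0^{t})^{\phi}=(0^{\phi})^{t}=0^{t}=m$ (using $0^{\phi}=0$), contradicting $m^{\phi}\neq m$; so $\alpha\neq\beta$ and the conclusion of the first paragraph applies.

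I expect the only real obstacle to be the bookkeeping around the order-two element $m$. The substitution that converts an automorphism of $\Gamma'$ into a two-fold morphism of $\Gamma$ collapses the difference $u-w=m$ onto the diagonal $u=v$, which is excluded from the automorphism condition; consequently that difference must be checked separately, and this is precisely the point at which defining $\Gamma'$ through $(S\setminus\{m\})+m$ rather than $S+m$ is engineered so that the computation closes uniformly in both cases $m\in S$ and $m\notin S$. Everything else is a routine translation between Cayley-graph adjacency and membership in $S$, together with the finiteness argument underlying Remark \ref{rtfs}.
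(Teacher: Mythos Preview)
Your proof is correct and follows essentially the same route as the paper: build a nontrivial TF-morphism of $\Gamma$ from the hypothesized automorphism $\phi$ of $\Gamma'$ by conjugating with the translation $t:x\mapsto x+m$, then handle the difference-$m$ case separately. The paper's pair is $(\alpha,\beta)=(\phi t,\,t\phi)$ while yours is $(\phi,\,t\phi t)$; these differ only by post-composing both components with $t\in\Aut(\Gamma)$, so the arguments are equivalent, and your direct construction of $g\in\Aut(\Gamma\times K_2)\setminus(\Aut(\Gamma)\times\mathbb{Z}_2)$ even sidesteps the connectedness/non-bipartiteness hypotheses of Lemma~\ref{TF}.
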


We failed to construct an unstable circulant graph  violating the conditions of Hujdurovi\'c et al. via graph products. On the other hand, some existing instability conditions for circulant graphs are too cumbersome to be practical. Additionally, multiple overlapping conditions may apply to the same unstable circulant graph, further complicating their use. We prove (see Lemma \ref{equi}) that for a circulant graph $\Gamma=\Cay(\mathbb{Z}_{2m}, S)$ satisfying the condition in \cite[Theorem 3.2]{HMM2021}, the following holds: if $m\in H\setminus K_{o}$, then $\Gamma$ must satisfy the condition in \cite[Proposition 3.7]{HMM2021}; if $4\mid|H|$ and $m\notin H\setminus K_{o}$, then $\Gamma$ must satisfy the condition in Theorem \ref{ncon}. Consequently, the first two conditions proposed by Hujdurovi\'c et al. (\cite[Theorem 3.2,~Proposition 3.7]{HMM2021}) along with the condition in Theorem \ref{ncon} can be unified and optimized as follows:
\begin{theorem}
\label{hmmtype}
Let $\Gamma=\Cay(\mathbb{Z}_{2m}, S)$ be a circulant graph.
Let $\Gamma=\Cay(\mathbb{Z}_{2m}, S)$ be a circulant graph.
Then $\Gamma$ is unstable if any of the following conditions is true:
\begin{enumerate}
  \item $(S\setminus K_o)+H=S\setminus K_o$ where $K$ is an even order subgroup of $\mathbb{Z}_{2m}$, $K_o=K\setminus 2K$ and $H$ is an odd order subgroup of $K$;
  \item $\Gamma\cong\Cay(\mathbb{Z}_{2m}, S+m)$;
  \item $\Cay\big(\mathbb{Z}_{2m},(S\setminus\{m\})+m\big)$ has an automorphism fixing $0$ but moving $m$.
\end{enumerate}
\end{theorem}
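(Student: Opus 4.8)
The plan is to verify the three conditions one at a time, in each case either invoking a result that already guarantees instability or exhibiting an automorphism of the canonical double cover $\Gamma\times K_2$ that fails to lie in the subgroup $\Aut(\Gamma)\times\mathbb{Z}_2$. Condition (iii) needs no argument: it is word for word the hypothesis of Theorem \ref{ncon}, so $\Gamma$ is unstable at once.

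For condition (ii) I would work directly with double covers. One first checks that $\phi\colon (a,i)\mapsto(a+mi,i)$ defines an isomorphism from $\Cay(\mathbb{Z}_{2m},S)\times K_2$ onto $\Cay(\mathbb{Z}_{2m},S+m)\times K_2$; this is a one-line verification using $2m=0$ in $\mathbb{Z}_{2m}$ and is valid for every $S$. (The hypothesis $\Gamma\cong\Cay(\mathbb{Z}_{2m},S+m)$ forces $0\notin S+m$, i.e.\ $m\notin S$, so the target is a bona fide simple graph.) Any isomorphism $\psi\colon\Gamma\to\Cay(\mathbb{Z}_{2m},S+m)$ lifts to the double-cover isomorphism $(a,i)\mapsto(\psi(a),i)$, and composing its inverse with $\phi$ gives an automorphism $g$ of $\Gamma\times K_2$ that preserves both layers, acting as $\psi^{-1}$ on one and as $a\mapsto\psi^{-1}(a+m)$ on the other. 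Since these layer maps differ whenever $m\neq 0$, the map $g$ is not a diagonal element $(\sigma,\mathrm{id})$ and, being layer-preserving, is not a swap $(\sigma,\mathrm{swap})$ either; hence $g\notin\Aut(\Gamma)\times\mathbb{Z}_2$ and $\Gamma$ is unstable. This reproves the pertinent case of \cite[Proposition 3.7]{HMM2021}.

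For condition (i), the hypothesis $(S\setminus K_o)+H=S\setminus K_o$ with $K$ of even order, $K_o=K\setminus 2K$, and $H$ an odd-order subgroup of $K$ is exactly the odd-order instance of the translation-invariance criterion of \cite[Theorem 3.2]{HMM2021}, which already furnishes instability; here the work is merely to align the notation and quote that theorem. If a self-contained derivation is wanted, one mimics the construction in its proof, building the witnessing automorphism of $\Gamma\times K_2$ from a coset-dependent translation by a nontrivial element of $H$, adjacency preservation following from the $H$-invariance of $S\setminus K_o$.

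I expect the main obstacle to be organizational rather than conceptual. In (ii) the single substantive point is ruling out that $g$ is a layer-swapping element of $\Aut(\Gamma)\times\mathbb{Z}_2$, which is precisely where the layer-preserving form of $g$ together with $m\neq 0$ enters. In (i) the delicate part is to track how the distinguished coset $K_o$ distributes across the two layers of $\Gamma\times K_2$ and to confirm that the oddness of $|H|$ is what makes the coset-dependent shift well defined; this same point underlies the separate Lemma \ref{equi}, which is what licenses restricting $H$ to odd order in the optimized statement.
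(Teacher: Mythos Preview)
Your proposal is correct and matches the paper's approach: the paper does not give a standalone proof of Theorem~\ref{hmmtype} but simply observes that condition~(iii) is Theorem~\ref{ncon}, condition~(ii) is Lemma~\ref{hmmtype2} (i.e.\ \cite[Proposition~3.7]{HMM2021}), and condition~(i) is the special case of Lemma~\ref{hmmtype1}~(2) with $|H|$ odd (so $|H|\ne 2$), while Lemma~\ref{equi} serves only to justify that nothing is lost by restricting to odd~$|H|$. Your explicit double-cover construction for~(ii) is a correct reproof of \cite[Proposition~3.7]{HMM2021} rather than a departure from the paper's route.
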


\medskip
The third condition of Hujdurovi\'c et al.
(\cite[Proposition 3.12]{HMM2021}) provides a method for constructing large unstable circulant graph from smaller ones. We generalize this construction to Proposition \ref{preserving} and accordingly categorize unstable circulant graphs $\Gamma=\Cay(\mathbb{Z}_{2m}, S)$ into two distinct types: those for which there exists a TF-morphism $(\alpha,\beta)$ such that both $\alpha$ and $\beta$ preserve $2\mathbb{Z}_{2m}$, and those for which no such TF-morphism exists. For the former, we observe that they satisfy the condition of Proposition \ref{preserving}. For the latter, we propose three conjectures that aid in classification.

\medskip
The remainder of the paper is structured as follows. Section \ref{sec2} presents preliminary results used in subsequent sections. Section \ref{sec3} establishes several fundamental results on graph products.  Section \ref{sec4} contains the proofs of Theorems \ref{directp} and \ref{dbundle}, while Section \ref{sec5} is devoted to the proof of Theorem \ref{others}. In Section \ref{sec6}, we employ graph products to construct nontrivially unstable circulant graphs. In Section \ref{sec7}, we investigate the stability of circulant graphs and provide the proof of Theorem \ref{ncon}. Finally, in Section \ref{sec8}, we categorize unstable circulant graphs into two distinct types and propose a classification framework based on this categorization.

\section{Preliminaries}
\label{sec2}

Employing the language of symmetric $(0, 1)$-matrices, Maru\v si\v c et al. \cite{MSZ1989} established a necessary and sufficient condition for stability of connected non-bipartite graphs. In view of TF-morphisms, this result can be reformulated as follows:
\begin{lem}
\label{TF}
A connected non-bipartite graph is unstable if and only if it has a nontrivial TF-morphism.
\end{lem}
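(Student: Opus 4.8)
The plan is to identify $\Aut(\Gamma\times K_2)$ with a group of two-fold morphisms, exploiting that $\Gamma\times K_2$ is precisely the canonical double cover of $\Gamma$. Write $V_i=V(\Gamma)\times\{i\}$ for $i\in\{0,1\}$, so that $\{V_0,V_1\}$ is a bipartition of $\Gamma\times K_2$, and let $\tau$ denote the automorphism $(u,i)\mapsto(u,1-i)$, which generates the $\mathbb{Z}_2$-factor of $\Aut(\Gamma)\times\mathbb{Z}_2$. The first step is the structural observation that, because $\Gamma$ is connected and non-bipartite, $\Gamma\times K_2$ is connected (the standard fact that the canonical double cover of a connected non-bipartite graph is connected). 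A connected bipartite graph has a unique bipartition, so every automorphism of $\Gamma\times K_2$ either preserves each of $V_0,V_1$ or interchanges them; since $\tau$ interchanges them, the subgroup $G^0$ of side-preserving automorphisms has index $2$, and multiplying by $\tau$ lets us reduce any automorphism to one lying in $G^0$.

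The core of the argument is a bijection between $G^0$ and the set of two-fold morphisms of $\Gamma$. Given $\sigma\in G^0$, I would define permutations $\alpha,\beta$ of $V(\Gamma)$ by $(u,0)^\sigma=(u^\alpha,0)$ and $(u,1)^\sigma=(u^\beta,1)$. Since the edges of $\Gamma\times K_2$ are exactly the pairs $\{(u,0),(v,1)\}$ with $u\sim_\Gamma v$, the fact that $\sigma$ preserves adjacency translates into $u\sim_\Gamma v\Leftrightarrow u^\alpha\sim_\Gamma v^\beta$ for all $u,v$; in particular $(\alpha,\beta)$ is a TF-morphism. Conversely, any TF-morphism $(\alpha,\beta)$ yields a side-preserving permutation $\sigma$ by the same formulas, and I claim it is an automorphism of $\Gamma\times K_2$. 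For distinct $u,v$ the required equivalence $u\sim_\Gamma v\Leftrightarrow u^\alpha\sim_\Gamma v^\beta$ is exactly Remark \ref{rtfs}(ii); the only additional point is the diagonal case $u=v$, where one must rule out a spurious edge $(u,0)^\sigma\sim(u,1)^\sigma$. This is where I would use that $(\alpha,\beta)$, being a TF-morphism of a finite graph, acts as a permutation of the relation $E(\Gamma)$ and hence of its complement in $V(\Gamma)\times V(\Gamma)$; as the diagonal lies in that complement, $u^\alpha\nsim_\Gamma u^\beta$ for every $u$, so no spurious edge arises. These two assignments are mutually inverse, giving the bijection $G^0\leftrightarrow\{\text{TF-morphisms of }\Gamma\}$.

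Finally I would match the two sides of the claimed equivalence. By Remark \ref{rtfs}(i), the side-preserving elements of $\Aut(\Gamma)\times\mathbb{Z}_2$ correspond under this bijection exactly to the \emph{trivial} TF-morphisms $(\gamma,\gamma)$ with $\gamma\in\Aut(\Gamma)$. For the forward direction, if $\Gamma$ is unstable then $\Aut(\Gamma\times K_2)$ strictly contains $\Aut(\Gamma)\times\mathbb{Z}_2$, and after multiplying a witnessing automorphism by $\tau$ if necessary we may take it in $G^0$; the corresponding TF-morphism $(\alpha,\beta)$ cannot satisfy $\alpha=\beta$ (else it would be trivial and hence lie in $\Aut(\Gamma)\times\mathbb{Z}_2$), so it is nontrivial. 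For the converse, a nontrivial TF-morphism $(\alpha,\beta)$ produces $\sigma\in G^0\subseteq\Aut(\Gamma\times K_2)$ which, having $\alpha\neq\beta$, is not side-preservingly of the form $(\gamma,\gamma)$ and hence lies outside $\Aut(\Gamma)\times\mathbb{Z}_2$, witnessing instability.

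I expect the main obstacle to be the correct handling of the diagonal pairs $u=v$ in the converse construction: a TF-morphism only postulates a one-directional implication on genuine edges, so a finiteness/permutation argument is essential to guarantee that the induced map on $\Gamma\times K_2$ creates no new adjacencies along the diagonal. The structural reduction to $G^0$ via uniqueness of the bipartition is routine once connectivity of $\Gamma\times K_2$ is invoked, but it is the other place where the hypotheses \emph{connected} and \emph{non-bipartite} are genuinely used.
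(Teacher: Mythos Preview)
Your argument is correct and complete. Note, however, that the paper does not actually supply its own proof of this lemma: it attributes the result to Maru\v si\v c et al.\ \cite{MSZ1989} (in the language of symmetric $(0,1)$-matrices) and to the TF-morphism reformulation of Lauri et al.\ \cite{LMS2014}, and then moves on. So there is nothing in the paper to compare your proof against line by line.

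That said, your approach is exactly the standard one underlying those references: connectedness of the canonical double cover (from \emph{connected} $+$ \emph{non-bipartite}) forces every automorphism to respect the bipartition up to the swap $\tau$, and then the index-$2$ subgroup $G^0$ of colour-preserving automorphisms is put in bijection with the TF-morphisms of $\Gamma$. Your handling of the diagonal case $u=v$ via the observation that $(\alpha,\beta)$, acting bijectively on $V(\Gamma)\times V(\Gamma)$ and preserving $E(\Gamma)$, must also preserve its complement (which contains the diagonal), is the clean way to close that gap and is precisely the content behind the second paragraph of Remark~\ref{rtfs}(ii). One very minor wording point: when you invoke ``Remark~\ref{rtfs}(ii)'' for the equivalence $u\sim_\Gamma v\Leftrightarrow u^\alpha\sim_\Gamma v^\beta$, you mean the \emph{second} paragraph of that remark (the one about TF-morphisms), not the first (which concerns TFS-morphisms); it would be worth making that explicit.
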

While this restatement was first proposed by Lauri et al. \cite{LMS2014}, the non-bipartiteness condition was inadvertently overlooked. We emphasize that this condition is crucial, as its omission compromises the validity of the criterion. For instance, $K_2$ is unstable yet admits no TF-morphism.

\medskip
The framework of TF-morphisms emphasizes the intrinsic structural properties of the graph itself, providing a streamlined method for analyzing stability. We now apply this framework to establish the following lemma.

\begin{lem}
\label{product}
Let $\Gamma$ and $\Sigma$ be two connected graphs, and let $L$ and $R$ be symmetric relations on $V(\Gamma)$ and $V(\Sigma)$, respectively. Let $\Gamma\ast\Sigma$ denote the graph with vertex set $V(\Gamma)\times V(\Sigma)$ such that two vertices $(a,x)\sim_{\Gamma\ast\Sigma}(b,y)$ if and only if $a\sim_L b$ and $x\sim_R y$.
If either $\Gamma$ is unstable with $E(\Gamma)=L$ or $\Sigma$ is unstable with $E(\Sigma)=R$, then $\Gamma\ast\Sigma$ is unstable.
\end{lem}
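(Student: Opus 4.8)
The plan is to argue directly from the definition of instability, deliberately avoiding Lemma~\ref{TF}: that criterion requires $\Gamma\ast\Sigma$ to be connected and non-bipartite, which need not hold, and it is also unavailable when the unstable factor is itself disconnected, $R$-thick, or bipartite. Interchanging coordinates via $(a,x)\mapsto(x,a)$ shows the two hypotheses are symmetric (under swapping $L$ and $R$), so I may assume that $\Gamma$ is unstable and $E(\Gamma)=L$. Since $\Aut(\Gamma)\times\mathbb{Z}_2\leq\Aut(\Gamma\times K_2)$, instability of $\Gamma$ supplies an automorphism $\phi$ of $\Gamma\times K_2$ with $\phi\notin\Aut(\Gamma)\times\mathbb{Z}_2$. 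Writing $V(K_2)=\{0,1\}$ and $\phi(a,i)=(\phi_1(a,i),\phi_2(a,i))$, my goal is to lift $\phi$ to an automorphism of $(\Gamma\ast\Sigma)\times K_2$ lying outside $\Aut(\Gamma\ast\Sigma)\times\mathbb{Z}_2$.

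I would define $\widehat{\phi}$ on $V\big((\Gamma\ast\Sigma)\times K_2\big)$ by $\widehat{\phi}(a,x,i)=(\phi_1(a,i),x,\phi_2(a,i))$; that is, $\widehat{\phi}$ applies $\phi$ to the $\Gamma$- and $K_2$-coordinates and fixes the $\Sigma$-coordinate, so it is a permutation. First I check $\widehat{\phi}\in\Aut\big((\Gamma\ast\Sigma)\times K_2\big)$. Because $E(\Gamma)=L$, adjacency reads $(a,x,i)\sim(b,y,j)$ if and only if $a\sim_\Gamma b$, $x\sim_R y$, and $i\neq j$. The clause $x\sim_R y$ is untouched by $\widehat{\phi}$, while the conjunction of the other two clauses is exactly the adjacency of $(a,i)$ and $(b,j)$ in $\Gamma\times K_2$, which $\phi$ preserves in both directions (Remark~\ref{rtfs}). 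Factoring out $x\sim_R y$ yields $(a,x,i)\sim(b,y,j)\Leftrightarrow\widehat{\phi}(a,x,i)\sim\widehat{\phi}(b,y,j)$, so $\widehat{\phi}$ is an automorphism.

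It remains to show $\widehat{\phi}\notin\Aut(\Gamma\ast\Sigma)\times\mathbb{Z}_2$. Suppose instead that $\widehat{\phi}$ is the element sending $((a,x),i)$ to $\big((a,x)^\pi,i+\varepsilon\big)$ for some $\pi\in\Aut(\Gamma\ast\Sigma)$ and $\varepsilon\in\{0,1\}$ (indices mod $2$). Comparing $K_2$-coordinates forces $\phi_2(a,i)=i+\varepsilon$ to be independent of $a$; comparing the $V(\Gamma\ast\Sigma)$-coordinate, $(a,x)^\pi=(\phi_1(a,i),x)$ is independent of $i$, so $\phi_1(a,i)$ does not depend on $i$ and we may write $\phi_1(a,i)=a^\sigma$. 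Hence $\phi(a,i)=(a^\sigma,i+\varepsilon)$, and since $\phi$ preserves adjacency we get $a\sim_\Gamma b\Leftrightarrow a^\sigma\sim_\Gamma b^\sigma$, i.e.\ $\sigma\in\Aut(\Gamma)$; thus $\phi\in\Aut(\Gamma)\times\mathbb{Z}_2$, contradicting the choice of $\phi$. Therefore $\widehat{\phi}$ witnesses the instability of $\Gamma\ast\Sigma$.

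The step I expect to require the most care is this last one: the rigidity of the $\mathbb{Z}_2$-factor—it must act uniformly on the $K_2$-coordinate and trivially on the $\Sigma$-coordinate—is exactly what forces a putative ``expected'' lift to descend from a diagonal $\phi$ back inside $\Aut(\Gamma)\times\mathbb{Z}_2$. The verification that $\widehat{\phi}$ is an automorphism rests entirely on the hypothesis $E(\Gamma)=L$, which aligns the $\Gamma$-factor relation with the graph adjacency that $\phi$ respects; for an arbitrary relation $L$ the lift need not preserve adjacency. Notably, the argument uses neither connectedness nor non-bipartiteness, and so handles every unstable $\Gamma$ uniformly, including the disconnected, $R$-thick, and bipartite cases subsumed by the definition of instability.
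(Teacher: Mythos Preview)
Your proof is correct and takes a genuinely different route from the paper's. The paper assumes without loss of generality that $\Sigma$ is unstable with $R=E(\Sigma)$ and then splits into two cases: when $\Sigma$ is bipartite, it observes that $\Gamma\ast\Sigma$ is bipartite with a nontrivial automorphism; when $\Sigma$ is non-bipartite, it invokes Lemma~\ref{TF} to obtain a nontrivial TF-morphism $(\alpha,\beta)$ of $\Sigma$ and lifts it to the nontrivial TF-morphism $((1,\alpha),(1,\beta))$ of $\Gamma\ast\Sigma$. You instead work directly at the level of the canonical double cover, lifting an unexpected automorphism of $\Gamma\times K_2$ to one of $(\Gamma\ast\Sigma)\times K_2$, with no case split and no appeal to Lemma~\ref{TF}.

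Your approach is more elementary and more uniform: it bypasses the TF-morphism machinery entirely, and---as you note---it does not use the connectedness hypothesis anywhere, so it in fact proves a slightly stronger statement. The paper's approach, by contrast, integrates smoothly with the TF-morphism framework that drives the rest of the paper. One small remark: your citation of Remark~\ref{rtfs} when checking that $\widehat{\phi}$ preserves adjacency is unnecessary, since $\phi$ is already an automorphism of $\Gamma\times K_2$ and hence preserves adjacency in both directions by definition; this does not affect the validity of the argument.
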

\begin{proof}
Without loss of any generality, we assume that $\Sigma$ is unstable and $R= E(\Sigma)$.

If $\Sigma$ is a bipartite graph with bipartition $\{U,W\}$, then it is obvious that $\Gamma\ast\Sigma$ is a bipartite graph with bipartition $\{V(\Gamma)\times U,V(\Gamma)\times W\}$. Since $\Sigma$ is connected and unstable, it has a nontrivial automorphism, say $\sigma$. It is straightforward to check that the permutation $(1,\sigma)$ on $V(\Gamma)\times V(\Sigma)$ is a nontrivial automorphism of $\Gamma\ast\Sigma$. Thus $\Gamma\ast\Sigma$ is unstable.

Now we assume that $\Sigma$ is non-bipartite.  Since $\Sigma$ is unstable, by Lemma \ref{TF} there exists a TF-morphism $(\alpha,\beta)$ of $\Sigma$ such that  $\alpha\ne\beta$. Since $R= E(\Sigma)$, we have $x\sim_R y\Longleftrightarrow x\sim_{\Sigma} y$. Consider the two permutations $(1,\alpha)$ and  $(1,\beta)$ on $V(\Gamma)\times V(\Sigma)$. Then $(1,\alpha)\ne(1,\beta)$ and
\begin{align*}
  (a,x)\sim_{\Gamma\ast\Sigma}(b,y)\Longleftrightarrow&
  a\sim_{L}b~\mbox{and}~x\sim_{R}y \\
  \Longleftrightarrow&
  a\sim_{L}b~\mbox{and}~x\sim_{\Sigma}y \\
  \Longleftrightarrow& a\sim_{L}b~ \mbox{and}~x^{\alpha}\sim_{\Sigma}y^{\beta}\\
  \Longleftrightarrow& a\sim_{L}b~ \mbox{and}~x^{\alpha}\sim_{R}y^{\beta}\\
  \Longleftrightarrow& (a,x^{\alpha})\sim_{\Gamma\ast\Sigma} (b,y^{\beta})\\
  \Longleftrightarrow& (a,x)^{(1,\alpha)}\sim_{\Gamma\ast\Sigma} (b,y)^{(1,\beta)}.
\end{align*}
Therefore $((1,\alpha),(1,\beta))$ is a nontrivial TF-morphism of $\Gamma\ast\Sigma$. By Lemma \ref{TF}, $\Gamma\ast\Sigma$ is unstable.
\end{proof}
\begin{lem}[{\cite[Proposition 8.10]{HIK2011}}]
\label{sdc}
Let $\Gamma$ and $\Sigma$ be two $R$-thin graphs without isolated vertices. Then
$S(\Gamma\times\Sigma)=S(\Gamma)\Box S(\Sigma)$.
\end{lem}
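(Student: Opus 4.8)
The plan is to compare the two graphs edge by edge, exploiting the fact that neighbourhoods in a direct product factorise. First I would record that $(\Gamma\times\Sigma)\big((a,x)\big)=\Gamma(a)\times\Sigma(x)$, so that
\[(\Gamma\times\Sigma)\big((a,x)\big)\cap(\Gamma\times\Sigma)\big((b,y)\big)=\big(\Gamma(a)\cap\Gamma(b)\big)\times\big(\Sigma(x)\cap\Sigma(y)\big).\]
Since neither factor has isolated vertices, this intersection is nonempty exactly when $\Gamma(a)\cap\Gamma(b)\neq\emptyset$ and $\Sigma(x)\cap\Sigma(y)\neq\emptyset$; hence $(a,x)\sim_{B(\Gamma\times\Sigma)}(b,y)$ iff the two vertices are distinct and both coordinate intersections are nonempty. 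From this, together with the definitions of $S(\Gamma)$, $S(\Sigma)$ and the Cartesian product, one checks at once that every edge of $S(\Gamma)\Box S(\Sigma)$ is an edge of $B(\Gamma\times\Sigma)$. It therefore suffices to show that the non-dispensable edges of $B(\Gamma\times\Sigma)$ are precisely the edges of $S(\Gamma)\Box S(\Sigma)$. I would split the edges of $B(\Gamma\times\Sigma)$ into the ``diagonal'' ones, with $a\neq b$ and $x\neq y$, and those in which exactly one coordinate is equal.

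For a diagonal edge $(a,x)(b,y)$ the goal is to prove it is always dispensable, and here $R$-thinness is essential: it gives $\Gamma(a)\neq\Gamma(b)$ and $\Sigma(x)\neq\Sigma(y)$. I would try the two witnesses $(a,y)$ and $(b,x)$. Writing $A=\Gamma(a)$, $B=\Gamma(b)$, $X=\Sigma(x)$, $Y=\Sigma(y)$, the two clauses of the dispensability definition for the witness $(a,y)$ reduce, using that $A$, $X\cap Y$ and $A\cap B$ are nonempty, to simple containment statements such as $A\not\subseteq B$, or the nested pair $X\subsetneq Y$ and $A\subsetneq B$, for the first clause, and analogously for the second; the witness $(b,x)$ yields the mirror statements. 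A short bookkeeping over the possible containment patterns of the pair $(A,B)$ and the pair $(X,Y)$ --- each pair being either incomparable or strictly nested, since $A\neq B$ and $X\neq Y$ --- shows that at least one of the two witnesses certifies dispensability in every configuration. Thus no diagonal edge survives, matching the absence of such edges in $S(\Gamma)\Box S(\Sigma)$.

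For an edge with a common coordinate, say $(a,x)(a,y)$ with $x\neq y$, the claim is that it is dispensable in $B(\Gamma\times\Sigma)$ if and only if $xy$ is dispensable in $B(\Sigma)$; the case $x=y$ is symmetric. One direction is easy: if $z$ witnesses the dispensability of $xy$ in $\Sigma$, then $(a,z)$ witnesses it for $(a,x)(a,y)$, since with a common first coordinate every relevant product intersection carries the common factor $A$ and the defining inclusions collapse to their $\Sigma$-coordinate. For the converse, given a witness $(c,z)$ for the product edge, I would observe that each of the two options in either clause of the product dispensability definition forces $A\subseteq\Gamma(c)$ (indeed $A=\Gamma(c)$ in the nested option); the first factors then cancel and the surviving second-coordinate inclusions are exactly the clauses asserting that $z$ witnesses the dispensability of $xy$ in $B(\Sigma)$. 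Combining the three cases, the surviving edges of $B(\Gamma\times\Sigma)$ are those with $a=b$ and $x\sim_{S(\Sigma)}y$, or with $x=y$ and $a\sim_{S(\Gamma)}b$, which is exactly $E\big(S(\Gamma)\Box S(\Sigma)\big)$.

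The main obstacle is the converse of the common-coordinate case together with the witness bookkeeping for diagonal edges: in both one must carefully track which of the two ``$\subsetneq$'' options of the dispensability definition can hold, and verify that the product-level inclusions force, and are forced by, the factor-level inclusions. Keeping the nonemptiness hypothesis (no isolated vertices) and the distinctness hypothesis ($R$-thinness) visible at every step is what makes these reductions go through.
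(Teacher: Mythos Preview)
The paper does not prove this lemma at all: it is quoted verbatim as \cite[Proposition~8.10]{HIK2011} and used as a black box in the proof of Theorem~\ref{directp}(ii). Your proposal is therefore not being compared against anything in the paper itself; what you have sketched is essentially the standard argument from the cited source, and it is correct. The only point worth tightening is the converse in the common-coordinate case: when the first clause of product dispensability holds via the nested option $A\times X\subsetneq C\times Z\subsetneq A\times Y$, you actually get $A=C$ (not merely $A\subseteq C$), whereas the intersection option yields only $A\subseteq C$; either way the first factor drops out as you say, but your parenthetical ``indeed $A=\Gamma(c)$ in the nested option'' should be kept distinct from the weaker conclusion in the other option so the reader sees why the reduction to the $\Sigma$-clause is valid in both cases.
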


The following lemma is extracted from \cite[Lemma 4.2]{QXZ2019}.
\begin{lem}
\label{tfsk}
Let $\Gamma$ be a graph with a TF-morphism $(\alpha,\beta)$. Then $\alpha,\beta\in\Aut(S(\Gamma))$.
\end{lem}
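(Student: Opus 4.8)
The plan is to show that the two neighbourhood identities forced by a TF-morphism make both $\alpha$ and $\beta$ preserve every set-theoretic relation entering the definition of $S(\Gamma)$. The starting point is Remark \ref{rtfs}(ii), which gives $u\sim_{\Gamma}v\Leftrightarrow u^{\alpha}\sim_{\Gamma}v^{\beta}$ for all distinct $u,v$. Fixing $u$ and letting $w$ range over $V(\Gamma)$, this equivalence reads $w\in\Gamma(u)\Leftrightarrow w^{\beta}\in\Gamma(u^{\alpha})$, so $\Gamma(u^{\alpha})=\Gamma(u)^{\beta}$ for every $u$; fixing $v$ instead gives the companion identity $\Gamma(v^{\beta})=\Gamma(v)^{\alpha}$. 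These two identities are the crux: they say $\alpha$ transports neighbourhoods exactly, up to relabelling by the bijection $\beta$, and vice versa.

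First I would verify $\alpha\in\Aut(B(\Gamma))$. Because $\beta$ is a bijection, $\Gamma(u^{\alpha})\cap\Gamma(v^{\alpha})=\Gamma(u)^{\beta}\cap\Gamma(v)^{\beta}=(\Gamma(u)\cap\Gamma(v))^{\beta}$, so this intersection is nonempty iff $\Gamma(u)\cap\Gamma(v)$ is; combined with the fact that $\alpha$ is a permutation (hence $u\neq v\Leftrightarrow u^{\alpha}\neq v^{\alpha}$), this yields $u\sim_{B(\Gamma)}v\Leftrightarrow u^{\alpha}\sim_{B(\Gamma)}v^{\alpha}$. The identity $\Gamma(v^{\beta})=\Gamma(v)^{\alpha}$ gives $\beta\in\Aut(B(\Gamma))$ in the same way.

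It remains to show that $\alpha$ and $\beta$ send dispensable edges of $B(\Gamma)$ to dispensable edges and non-dispensable to non-dispensable, for then they preserve the spanning subgraph $S(\Gamma)$ obtained by deleting exactly the dispensable edges. For $\alpha$ I would take a $B(\Gamma)$-edge $\{u,v\}$ with a dispensability witness $w$ and propose $w^{\alpha}$ as a witness for $\{u^{\alpha},v^{\alpha}\}$. Applying the identities $\Gamma(u^{\alpha})=\Gamma(u)^{\beta}$, $\Gamma(v^{\alpha})=\Gamma(v)^{\beta}$, $\Gamma(w^{\alpha})=\Gamma(w)^{\beta}$ converts each of the four strict-inclusion alternatives in the definition (the two of the form $\Gamma(u)\cap\Gamma(v)\subsetneq\Gamma(u)\cap\Gamma(w)$, $\Gamma(u)\subsetneq\Gamma(w)\subsetneq\Gamma(v)$ and their $u\leftrightarrow v$ mirrors) into the corresponding inclusion for $u^{\alpha},v^{\alpha},w^{\alpha}$; since $\beta$ is a bijection, each strict inclusion $A\subsetneq B$ holds iff $A^{\beta}\subsetneq B^{\beta}$, so the transfer is an equivalence in both directions. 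Hence $\{u,v\}$ is dispensable iff $\{u^{\alpha},v^{\alpha}\}$ is, and $\alpha\in\Aut(S(\Gamma))$; the argument for $\beta$ is identical, using $\Gamma(v^{\beta})=\Gamma(v)^{\alpha}$ and the bijection $\alpha$ on subsets.

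The only point requiring care—and which I would flag as the main obstacle—is not any single computation but ensuring the equivalences run both ways: one must exploit that $\alpha,\beta$ are genuine permutations, so that the neighbourhood identities hold for \emph{every} vertex (in particular for the preimage $(w')^{\alpha^{-1}}$ of an arbitrary witness $w'$ of the image edge), rather than only as one-directional containments. Once this is observed, dispensability is preserved verbatim and the lemma follows.
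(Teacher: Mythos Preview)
Your argument is correct. The paper does not supply its own proof of this lemma; it simply records the statement as ``extracted from \cite[Lemma~4.2]{QXZ2019}''. Your direct approach---deriving the neighbourhood identities $\Gamma(u^{\alpha})=\Gamma(u)^{\beta}$ and $\Gamma(v^{\beta})=\Gamma(v)^{\alpha}$ from Remark~\ref{rtfs}(ii), then checking that these identities transport both Boolean-square adjacency and dispensability verbatim because $\alpha,\beta$ are bijections---is exactly the standard route and is what the cited reference does. One microscopic point: you quote the equivalence $u\sim_{\Gamma}v\Leftrightarrow u^{\alpha}\sim_{\Gamma}v^{\beta}$ only for \emph{distinct} $u,v$, but then implicitly use it for $w=u$ when asserting $\Gamma(u^{\alpha})=\Gamma(u)^{\beta}$; this is harmless, since Remark~\ref{rtfs}(ii) says $(\alpha,\beta)$ permutes the arc set, so $u^{\alpha}\sim_{\Gamma}u^{\beta}$ would force the non-arc $(u,u)$ to be an arc, and both sides of the equivalence are false when $u=v$.
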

For a TFS-morphism $(\alpha,\beta)$ of $\Gamma$, the composition $\alpha\beta^{-1}$ is clearly a \emph{derangement} (see \cite[Definition 1.1.1]{BG2016}), meaning it fixes no vertex of $\Gamma$. The following lemma describes the relationship between TF-morphisms and TFS-morphisms.
\begin{lem}
\label{tfaa}
Let $\alpha$ and $\beta$ be two permutations on the vertex set $V(\Gamma)$ of a graph $\Gamma$. Then $(\alpha,\beta)$ is a TFS-morphism of $\Gamma$ if and only if $\alpha\beta^{-1}$ is a derangement and  $(\alpha,\beta)$ is a TF-morphism of $\overline{\Gamma}$.
\end{lem}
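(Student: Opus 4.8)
The plan is to route everything through the reflexive closure of adjacency and to use that, on $V(\Gamma)\times V(\Gamma)$, the edge set of $\overline{\Gamma}$ is the set-complement of that closure. Concretely, I would keep the notation $E^{\circ}=\{(u,v)\mid u=v\ \text{or}\ u\sim_{\Gamma}v\}$ from Remark \ref{rtfs} and record the single structural observation $E(\overline{\Gamma})=\big(V(\Gamma)\times V(\Gamma)\big)\setminus E^{\circ}$, which holds because $(u,v)\in E(\overline{\Gamma})$ means exactly $u\neq v$ and $u\nsim_{\Gamma}v$. Since $\alpha$ and $\beta$ are permutations of $V(\Gamma)$, the induced map $(u,v)\mapsto(u^{\alpha},v^{\beta})$ is a bijection of $V(\Gamma)\times V(\Gamma)$, and a bijection that carries a subset onto itself carries the complementary subset onto itself as well; this is the pivot on which both directions turn.

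For the forward implication I would assume $(\alpha,\beta)$ is a TFS-morphism. The derangement claim is immediate: the diagonal condition $u^{\alpha}\sim_{\Gamma}u^{\beta}$ forces $u^{\alpha}\neq u^{\beta}$ for every $u$ since $\Gamma$ is simple, so $\alpha\beta^{-1}$ fixes no vertex. For the rest, Remark \ref{rtfs} already grants that $(\alpha,\beta)$ permutes $E^{\circ}$; applying the complementation observation then shows $(\alpha,\beta)$ permutes $E(\overline{\Gamma})$, and in particular $u\sim_{\overline{\Gamma}}v$ implies $u^{\alpha}\sim_{\overline{\Gamma}}v^{\beta}$, which is precisely the statement that $(\alpha,\beta)$ is a TF-morphism of $\overline{\Gamma}$.

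For the converse I would assume $\alpha\beta^{-1}$ is a derangement and that $(\alpha,\beta)$ is a TF-morphism of $\overline{\Gamma}$. By the TF-morphism part of Remark \ref{rtfs} applied to $\overline{\Gamma}$, the map $(\alpha,\beta)$ permutes $E(\overline{\Gamma})$, hence also permutes its complement $E^{\circ}$. I would then verify the two TFS-conditions from membership in $E^{\circ}$: for an edge $u\sim_{\Gamma}v$ the pair $(u,v)$ lies in $E^{\circ}$, so $(u^{\alpha},v^{\beta})\in E^{\circ}$, giving $u^{\alpha}=v^{\beta}$ or $u^{\alpha}\sim_{\Gamma}v^{\beta}$; and for the diagonal pair $(u,u)\in E^{\circ}$ we get $u^{\alpha}=u^{\beta}$ or $u^{\alpha}\sim_{\Gamma}u^{\beta}$, whereupon the derangement hypothesis deletes the equality and leaves $u^{\alpha}\sim_{\Gamma}u^{\beta}$.

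The main obstacle I anticipate is this final diagonal step, which is the only place the derangement is genuinely needed. Membership of $(u^{\alpha},u^{\beta})$ in $E^{\circ}$ is consistent with $u^{\alpha}=u^{\beta}$, a possibility that respects all the complementation bookkeeping yet would violate the strict adjacency a TFS-morphism demands on the diagonal; it is exactly the derangement condition that forbids it and upgrades ``lies in $E^{\circ}$'' to ``is an edge of $\Gamma$.'' The care required is thus to treat the diagonal and the edges uniformly through $E^{\circ}$ while isolating the single extra hypothesis that sharpens the diagonal conclusion, rather than in any lengthy computation.
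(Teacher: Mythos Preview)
Your proposal is correct and follows essentially the same approach as the paper: both directions pivot on Remark~\ref{rtfs} to obtain that $(\alpha,\beta)$ permutes $E^{\circ}$ (respectively $E(\overline{\Gamma})$), pass to the complement, and use the derangement hypothesis solely to upgrade the diagonal conclusion. Your framing via the explicit identity $E(\overline{\Gamma})=(V(\Gamma)\times V(\Gamma))\setminus E^{\circ}$ is slightly more uniform than the paper's phrasing, but the underlying argument is the same.
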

\begin{proof}
First we prove the necessity.
Let $(\alpha,\beta)$ be a TFS-morphism of $\Gamma$. For $u\in V(\Gamma)$, since $u^\alpha\sim_{\Gamma}u^\beta$, we have $u^{\alpha\beta^{-1}}\neq u$ and therefore $\alpha\beta^{-1}$ is a derangement.
By Remark \ref{rtfs} (ii), we have
\begin{equation*}
u\sim_{\Gamma}v\Longleftrightarrow u^\alpha=v^\beta~\mbox{or}~
u^\alpha\sim_{\Gamma}v^\beta
\end{equation*}
and it follows that
\begin{equation*}
u\sim_{\overline{\Gamma}}v\Longleftrightarrow
u^\alpha\sim_{\overline{\Gamma}}v^\beta.
\end{equation*}
for any two distinct vertices $u,v\in V(\Gamma)$.
Thus $(\alpha,\beta)$ is a TF-morphism of $\overline{\Gamma}$.

Now we prove the sufficiency. Let $(\alpha,\beta)$ be a TF-morphism of $\overline{\Gamma}$ and $\alpha\beta^{-1}$ be a derangement. By the definition of TF-morphism, we have that $(\alpha,\beta)$ acts as a permutation on the set
$\{(u,v)\mid u\sim_{\overline{\Gamma}}v,~u,v\in V(\Gamma)\}$. Thus either $u^{\alpha}=v^{\beta}$ or $u^{\alpha}\sim_{\Gamma}v^{\beta}$ for any two vertices $u$ and $v$ not adjacent in $\overline{\Gamma}$. In particular, $u\sim_{\Gamma}v$ implies $u^{\alpha}=v^{\beta}$ or $u^{\alpha}\sim_{\Gamma}v^{\beta}$. Since $\alpha\beta^{-1}$ is a derangement, we have $u^{\alpha}\neq u^{\beta}$ and it follows that $u^{\alpha}\sim_{\Gamma}u^{\beta}$.
Therefore $(\alpha,\beta)$ is a TFS-morphism of $\Gamma$.
\end{proof}

\section{On graph products}
\label{sec3}

In this section, we present some basic results on graph products. The subsequent lemmas are straightforward to establish, and most are well known (see, e.g., \cite{HIK2011}). Proofs are provided only for results not previously established in the literature and requiring non-trivial argumentation.

\begin{lem}
\label{cnthdp}
Let $\Gamma$ and $\Sigma$ be two graphs. Then
\begin{enumerate}
\item $\Gamma\times\Sigma$ is connected if and only if both $\Gamma$ and $\Sigma$ are connected and at least one of them is non-bipartite;
\item $\Gamma\times\Sigma$ is non-bipartite if and only if both $\Gamma$ and $\Sigma$ are non-bipartite;
\item $\Gamma\times\Sigma$ is $R$-thin if and only if both $\Gamma$ and $\Sigma$ are $R$-thin.
\end{enumerate}
\end{lem}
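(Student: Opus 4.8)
The plan is to reduce all three statements to two elementary observations and set these up first. The first is the \emph{walk correspondence}: a sequence $(a_0,x_0),\dots,(a_\ell,x_\ell)$ is a walk in $\Gamma\times\Sigma$ if and only if $a_0,\dots,a_\ell$ is a walk in $\Gamma$ and $x_0,\dots,x_\ell$ is a walk in $\Sigma$, since by definition consecutive adjacency in the product is exactly simultaneous adjacency in both factors. Thus walks in the product project to walks of the same length in each factor, and conversely any pair of factor walks sharing a common length $\ell$ lifts to a product walk of length $\ell$. The second observation is the \emph{neighbourhood identity} $(\Gamma\times\Sigma)(a,x)=\Gamma(a)\times\Sigma(x)$, again immediate from the adjacency rule. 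For part (ii), I would use that a graph is non-bipartite exactly when it has a closed walk of odd length: projecting an odd closed walk of the product gives odd closed walks of the same length in both factors; conversely, given odd closed walks at $u\in V(\Gamma)$ and $x\in V(\Sigma)$, padding either one by repeatedly traversing an incident edge back and forth raises its length by $2$, so I can arrange a common odd length $\ell$ and lift to an odd closed walk at $(u,x)$.

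For part (i) I would treat the two directions separately. For necessity, projection shows both factors are connected (fixing a vertex in the opposite factor, a product walk between two vertices projects to a walk joining the corresponding vertices in one factor); moreover, if both factors were connected and bipartite with parts $\{A_1,A_2\}$ and $\{B_1,B_2\}$, then any product edge flips the part in each coordinate, so the product splits into $(A_1\times B_1)\cup(A_2\times B_2)$ and $(A_1\times B_2)\cup(A_2\times B_1)$ with no edge between them, hence is disconnected---forcing at least one factor to be non-bipartite. For sufficiency, assume both factors connected with, say, $\Gamma$ non-bipartite. Given $(a,x)$ and $(b,y)$, pick an $x$--$y$ walk of some length $n$ in the connected graph $\Sigma$ and pad it to obtain $x$--$y$ walks of every length $n,n+2,\dots$; then choose $\ell\ge n$ of the same parity as $n$ and large enough that the connected non-bipartite graph $\Gamma$ has an $a$--$b$ walk of length $\ell$ (such walks exist of all sufficiently large lengths of both parities). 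Lifting this matched pair joins $(a,x)$ to $(b,y)$.

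For part (iii) I would argue directly from the neighbourhood identity: distinct vertices $(a,x)$ and $(a',x')$ are twins in $\Gamma\times\Sigma$ precisely when $\Gamma(a)\times\Sigma(x)=\Gamma(a')\times\Sigma(x')$. When all four neighbourhoods are nonempty, a Cartesian product of nonempty sets determines its factors, so this forces $\Gamma(a)=\Gamma(a')$ and $\Sigma(x)=\Sigma(x')$; combined with $R$-thinness of the factors this gives $(a,x)=(a',x')$, proving the product is $R$-thin. Conversely, a twin pair in either factor, say $\Gamma(a)=\Gamma(a')$ with $a\neq a'$, produces the twin pair $(a,x),(a',x)$ in the product for any $x$, so $R$-thickness of a factor forces $R$-thickness of the product.

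The step I expect to be the main obstacle is the length-and-parity reconciliation in part (i), i.e.\ the Weichsel-type claim that a connected non-bipartite graph has walks of every sufficiently large length, of both parities, between any ordered pair of vertices; I would isolate this as a short auxiliary claim (closed odd walk plus edge-padding) rather than reprove it inline. A second, easily overlooked point arises in part (iii): empty neighbourhoods break the ``product of sets determines its factors'' step, since two isolated vertices already share the empty neighbourhood and an isolated vertex in one factor spawns many twins in the product. This is exactly the setting in which the companion statement Lemma~\ref{sdc} is phrased, so I would either adopt the same no-isolated-vertices hypothesis or dispose of the isolated-vertex case separately.
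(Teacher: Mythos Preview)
The paper does not supply its own proof of this lemma: it is listed in Section~\ref{sec3} among the ``basic results on graph products'' that are ``straightforward to establish, and most are well known (see, e.g., \cite{HIK2011}),'' and no argument is given. Your outline is the standard one and is correct; part~(i) is Weichsel's theorem, and your walk-length reconciliation via padding is exactly the usual route.

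Your flag on the isolated-vertex edge case in (iii) is well taken and worth recording. The ``if'' direction of (iii) genuinely fails when one factor has an isolated vertex $a$ and the other has at least two vertices: the fibre $\{a\}\times V(\Sigma)$ then consists of distinct product vertices all with empty neighbourhood, so the product is $R$-thick even though both factors may be $R$-thin. The standard reference states this result under a no-isolated-vertices hypothesis (compare Lemma~\ref{sdc}, taken from the same source), and in the paper the lemma is only applied to factors that are connected and non-bipartite, so the defect does not propagate.
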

\begin{lem}
\label{cnthcp}
Let $\Gamma$ and $\Sigma$ be two graphs without isolated vertices. Then
\begin{enumerate}
\item $\Gamma\Box\Sigma$ is connected if and only if both $\Gamma$ and $\Sigma$ are connected;
\item $\Gamma\Box\Sigma$ is non-bipartite if and only if at least one of $\Gamma$ and $\Sigma$ is non-bipartite;
\item $\Gamma\Box\Sigma$ is $R$-thick if and only if it contains a connected component which is a $4$-cycle.
\end{enumerate}
\end{lem}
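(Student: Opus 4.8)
The plan is to treat the three parts separately, since (i) and (ii) are standard while (iii) carries the real content. For part (i), I would use the two coordinate projections $\pi_\Gamma\colon(a,x)\mapsto a$ and $\pi_\Sigma\colon(a,x)\mapsto x$. Each edge of $\Gamma\Box\Sigma$ maps under $\pi_\Gamma$ either to an edge of $\Gamma$ or to a single vertex, so the image of any path of $\Gamma\Box\Sigma$ becomes, after deleting repetitions, a walk in $\Gamma$; hence connectivity of $\Gamma\Box\Sigma$ forces connectivity of $\Gamma$, and symmetrically of $\Sigma$. Conversely, if both factors are connected then from any $(a,x)$ I can first move along a $\Gamma$-layer (using a path $a=a_0\sim_\Gamma\cdots\sim_\Gamma a_k=b$ to build $(a_0,x)\sim\cdots\sim(a_k,x)$) and then along a $\Sigma$-layer to reach an arbitrary $(b,y)$, establishing connectivity.

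For part (ii) I would argue via odd closed walks. If, say, $\Gamma$ is non-bipartite it contains an odd closed walk; fixing any $x\in V(\Sigma)$ and keeping the second coordinate constant produces an odd closed walk in $\Gamma\Box\Sigma$, so the product is non-bipartite. Conversely, if both factors are bipartite with part-indicator functions $\phi\colon V(\Gamma)\to\{0,1\}$ and $\psi\colon V(\Sigma)\to\{0,1\}$, then $(a,x)\mapsto \phi(a)+\psi(x)\pmod 2$ is a proper $2$-colouring of $\Gamma\Box\Sigma$, because every edge of the product changes exactly one coordinate and hence flips exactly one of $\phi,\psi$. Thus the product is bipartite, and contrapositively its non-bipartiteness forces non-bipartiteness of a factor.

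The crux is part (iii), whose starting point is the neighbourhood formula $(\Gamma\Box\Sigma)\big((a,x)\big)=\big(\{a\}\times\Sigma(x)\big)\cup\big(\Gamma(a)\times\{x\}\big)$. The backward direction is immediate: a $4$-cycle is $R$-thick, its two ``diagonal'' vertices sharing a common neighbourhood, and since these vertices fill out a whole component their global neighbourhoods coincide as well, so $\Gamma\Box\Sigma$ is $R$-thick. For the forward direction I would take distinct twins $(a,x),(b,y)$ with equal neighbourhoods and run a case analysis, and here the no-isolated-vertices hypothesis is the key ingredient. If $a=b$ (so $x\ne y$), then any $(a',x)$ with $a'\in\Gamma(a)$—a nonempty set, since $\Gamma$ has no isolated vertex—lies in the neighbourhood of $(a,x)$ but fails to neighbour $(a,y)$ (neither $a'=a,\,x\sim_\Sigma y$ nor $a'\sim_\Gamma a,\,x=y$ can hold), a contradiction; the case $x=y$ is symmetric. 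Hence the twins differ in both coordinates.

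In the remaining case $a\ne b$, $x\ne y$, comparing the two neighbourhoods coordinate by coordinate is the heart of the argument. Forcing $\{a\}\times\Sigma(x)$ into the neighbourhood of $(b,y)$ yields $\Sigma(x)=\{y\}$ and $a\sim_\Gamma b$, while forcing $\Gamma(a)\times\{x\}$ in yields $\Gamma(a)=\{b\}$ and $x\sim_\Sigma y$; the symmetric inclusions give $\Sigma(y)=\{x\}$ and $\Gamma(b)=\{a\}$. Together these say that $\{a,b\}$ spans a $K_2$-component of $\Gamma$ and $\{x,y\}$ spans a $K_2$-component of $\Sigma$, so $\{a,b\}\times\{x,y\}$ is a component of $\Gamma\Box\Sigma$ isomorphic to $K_2\Box K_2=C_4$, as required. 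I expect the bookkeeping in this final case—repeatedly exploiting that $\Gamma(a),\Sigma(x),\Gamma(b),\Sigma(y)$ are nonempty in order to upgrade ``every neighbour must match'' into the equalities $\Gamma(a)=\{b\}$, $\Sigma(x)=\{y\}$, and their symmetric counterparts—to be the only genuinely delicate step.
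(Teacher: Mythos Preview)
Your proposal is correct and follows essentially the same route as the paper. The paper treats (i) and (ii) as well known and only proves (iii); your arguments for (i) and (ii) are the standard ones. For (iii), both you and the paper start from the neighbourhood formula $(\Gamma\Box\Sigma)(a,x)=(\Gamma(a)\times\{x\})\cup(\{a\}\times\Sigma(x))$, first rule out the cases $a=b$ and $x=y$ using the no-isolated-vertices hypothesis, and then in the remaining case $a\neq b$, $x\neq y$ deduce $\Gamma(a)=\{b\}$, $\Gamma(b)=\{a\}$, $\Sigma(x)=\{y\}$, $\Sigma(y)=\{x\}$; the only cosmetic difference is that the paper matches the two disjoint pieces of each neighbourhood set-theoretically, while you chase individual elements.
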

\begin{proof}
(iii)~The sufficiency is obviously true. Now we prove the necessity. Assume that $\Gamma\Box\Sigma$ is $R$-thick. Set $\Lambda:=\Gamma\Box\Sigma$ and let $(a,x)$ and $(b,y)$ be two distinct vertices of $\Lambda$ such that $\Lambda(a,x)=\Lambda(b,y)$.
Since
\begin{equation*}
\Lambda(a,x)=\left(\Gamma(a)\times \{x\}\right)\cup \left(\{a\}\times\Sigma(x)\right)~\mbox{and}~
\Lambda(b,y)=\left(\Gamma(b)\times \{y\}\right)\cup \left(\{b\}\times\Sigma(y)\right),
\end{equation*}
we have
\begin{equation}\label{axby}
\left(\Gamma(a)\times \{x\}\right)\cup \left(\{a\}\times\Sigma(x)\right)=\left(\Gamma(b)\times \{y\}\right)\cup \left(\{b\}\times\Sigma(y)\right).
\end{equation}
Noting that $a\notin\Gamma(a)$, we have
$\left(\{a\}\times\Sigma(x)\right)\cap\left(\Gamma(a)\times \{y\}\right)=\emptyset$.
Therefore, if $x\neq y$, then
$\Lambda(a,x)\cap\left(\Gamma(a)\times \{y\}\right)=\emptyset$ which leads to $\Lambda(a,x)\neq\Lambda(a,y)$. Similarly, $\Lambda(a,x)\neq\Lambda(b,x)$ whenever $a\neq b$. Thus $a\neq b$ and $x\neq y$. It follows that
\begin{equation*}
\left(\Gamma(a)\times \{x\}\right)\cap \left(\Gamma(b)\times \{y\}\right)=\emptyset~\mbox{and}~ \left(\{a\}\times\Sigma(x)\right)\cap \left(\{b\}\times\Sigma(y)\right)=\emptyset.
\end{equation*}
Combining the equation \ref{axby}, we have
\begin{equation*}
\Gamma(a)\times \{x\}=\{b\}\times\Sigma(y)~\mbox{and}~
\{a\}\times\Sigma(x)=\Gamma(b)\times \{y\}.
\end{equation*}
Therefore
\begin{equation*}
\Gamma(a)=\{b\},~\{x\}=\Sigma(y),\{a\}=\Gamma(b)~\mbox{and}~
\Sigma(x)=\{y\}.
\end{equation*}
It follows that the four vertices $(a,x),(a,y),(b,x),(b,y)$ induce a connected component which is specifically a $4$-cycle.
\end{proof}
\begin{coro}
Let $\Gamma$ and $\Sigma$ be two connected graphs without isolated vertices. Then $\Gamma\Box\Sigma$ is $R$-thick if and only if both $\Gamma$ and $\Sigma$ are isomorphic to $K_2$.
\end{coro}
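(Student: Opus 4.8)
The plan is to obtain this corollary as an immediate consequence of Lemma \ref{cnthcp}, with no genuinely new work required. First I would dispatch the sufficiency: if $\Gamma\cong\Sigma\cong K_2$, then $\Gamma\Box\Sigma\cong K_2\Box K_2$ is a $4$-cycle, and a $4$-cycle is $R$-thick since each of its two pairs of antipodal vertices shares a common neighbourhood. Alternatively, one can observe that $K_2\Box K_2$ is itself a connected component that is a $4$-cycle and simply invoke Lemma \ref{cnthcp}(iii).

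For the necessity, suppose $\Gamma\Box\Sigma$ is $R$-thick. Since $\Gamma$ and $\Sigma$ are both connected and have no isolated vertices, Lemma \ref{cnthcp}(i) guarantees that $\Gamma\Box\Sigma$ is connected, so its only connected component is the whole graph. By Lemma \ref{cnthcp}(iii), this unique component must be a $4$-cycle; hence $\Gamma\Box\Sigma$ is itself a $4$-cycle, and in particular $|V(\Gamma)|\cdot|V(\Sigma)|=|V(\Gamma\Box\Sigma)|=4$.

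To finish, I would exploit the hypothesis that neither factor has isolated vertices. A connected graph without isolated vertices has at least two vertices, so $|V(\Gamma)|\geq 2$ and $|V(\Sigma)|\geq 2$; combined with $|V(\Gamma)|\cdot|V(\Sigma)|=4$ this forces $|V(\Gamma)|=|V(\Sigma)|=2$. The only graph on two vertices without isolated vertices is $K_2$, whence $\Gamma\cong\Sigma\cong K_2$.

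I do not expect any real obstacle: once Lemma \ref{cnthcp} is available the argument reduces to combining connectivity, the $4$-cycle characterization, and a short order count. The only point demanding mild care is using the ``no isolated vertices'' hypothesis to exclude degenerate factorizations (a single-vertex factor, or an order-$4$ factor paired with a trivial one), which is exactly what pins both orders down to $2$.
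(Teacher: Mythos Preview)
Your proposal is correct and matches the paper's intent: the paper states this corollary without proof, treating it as an immediate consequence of Lemma~\ref{cnthcp}, and your argument supplies exactly the routine details one would expect (connectivity forces the $4$-cycle component to be the whole graph, and the order count together with ``no isolated vertices'' pins both factors down to $K_2$).
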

\begin{lem}
\label{cnthsp}
Let $\Gamma$ and $\Sigma$ be two graphs without isolated vertices. Then
\begin{enumerate}
\item $\Gamma\boxtimes\Sigma$ is connected if and only if both $\Gamma$ and $\Sigma$ are connected;
\item $\Gamma\boxtimes\Sigma$ is non-bipartite;
\item $\Gamma\boxtimes\Sigma$ is $R$-thin.
\end{enumerate}
\end{lem}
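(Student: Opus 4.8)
The plan is to treat the three assertions separately, in each case exploiting a convenient reformulation of adjacency in the strong product. Throughout I use that, since neither factor has an isolated vertex, $\Gamma(a)\neq\emptyset$ and $\Sigma(x)\neq\emptyset$ for all $a\in V(\Gamma)$ and $x\in V(\Sigma)$; equivalently, each closed neighbourhood $\Gamma[a]$ and $\Sigma[x]$ has at least two vertices. For (i), I would first observe that $\Gamma\Box\Sigma$ is a spanning subgraph of $\Gamma\boxtimes\Sigma$, because the two defining clauses of the Cartesian product (``$a=b$ and $x\sim_{\Sigma}y$'' and ``$a\sim_{\Gamma}b$ and $x=y$'') are among the three defining clauses of the strong product. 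Hence, if $\Gamma$ and $\Sigma$ are both connected, then $\Gamma\Box\Sigma$ is connected by Lemma \ref{cnthcp}(i), and a connected spanning subgraph forces the larger graph $\Gamma\boxtimes\Sigma$ to be connected. For the converse I would project onto the first coordinate: if $(a,x)\sim_{\Gamma\boxtimes\Sigma}(b,y)$ then $a=b$ or $a\sim_{\Gamma}b$, so any walk in $\Gamma\boxtimes\Sigma$ projects to a walk in $\Gamma$; thus connectivity of $\Gamma\boxtimes\Sigma$ yields connectivity of $\Gamma$, and symmetrically of $\Sigma$.

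For (ii), I would simply exhibit an odd cycle. Choose an edge $a\sim_{\Gamma}b$ and an edge $x\sim_{\Sigma}y$, which exist because there are no isolated vertices. Then $(a,x),(b,x),(b,y)$ form a triangle in $\Gamma\boxtimes\Sigma$: the edge $(a,x)\sim(b,x)$ comes from the ``$a\sim_{\Gamma}b$, $x=y$'' clause, the edge $(b,x)\sim(b,y)$ from the ``$a=b$, $x\sim_{\Sigma}y$'' clause, and the edge $(a,x)\sim(b,y)$ from the ``$a\sim_{\Gamma}b$, $x\sim_{\Sigma}y$'' clause. Since a graph containing a triangle is non-bipartite, the claim follows.

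For (iii), which I expect to be the main obstacle, I would first establish the clean formula
\[
(\Gamma\boxtimes\Sigma)(a,x)=\big(\Gamma[a]\times\Sigma[x]\big)\setminus\{(a,x)\},
\]
by verifying that $(b,y)\sim_{\Gamma\boxtimes\Sigma}(a,x)$ is equivalent to $b\in\Gamma[a]$, $y\in\Sigma[x]$ and $(b,y)\neq(a,x)$, which is a routine case check against the three clauses. Now suppose, toward a contradiction, that two distinct vertices $(a,x)\neq(b,y)$ have the same open neighbourhood. Projecting this common set onto the first coordinate recovers exactly $\Gamma[a]$: every $c\in\Gamma[a]$ survives, because the fibre over $c$ is $\Sigma[x]$ when $c\neq a$ and $\Sigma(x)$ when $c=a$, both nonempty precisely because $\Sigma[x]$ has at least two elements. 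Likewise the second-coordinate projection recovers $\Sigma[x]$, and doing the same for $(b,y)$ gives $\Gamma[a]=\Gamma[b]$ and $\Sigma[x]=\Sigma[y]$. Consequently $\Gamma[a]\times\Sigma[x]=\Gamma[b]\times\Sigma[y]=:G$, and the equality of open neighbourhoods becomes $G\setminus\{(a,x)\}=G\setminus\{(b,y)\}$ with both removed points lying in $G$. Removing two distinct points from $G$ cannot yield the same set, so $(a,x)=(b,y)$, a contradiction; hence $\Gamma\boxtimes\Sigma$ is $R$-thin. The one delicate point is the projection step, where the no-isolated-vertex hypothesis is exactly what guarantees each fibre is nonempty, so that the projections return the full closed neighbourhoods rather than proper subsets.
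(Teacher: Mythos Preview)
Your proof is correct. Parts (i) and (ii) are not argued in the paper (it only addresses (iii)), and your treatments are standard and clean.

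For (iii), both you and the paper hinge on the identity $(\Gamma\boxtimes\Sigma)(a,x)=\bigl(\Gamma[a]\times\Sigma[x]\bigr)\setminus\{(a,x)\}$, but the executions differ. The paper proceeds by a case split: first it shows directly that $(a,x)$ and $(a,y)$ (respectively $(a,x)$ and $(b,x)$) have different neighbourhoods by exhibiting an explicit distinguishing vertex, and then for $(a,x)$ versus $(b,y)$ with $a\neq b$, $x\neq y$ it separately handles $\Gamma[a]\neq\Gamma[b]$ (or $\Sigma[x]\neq\Sigma[y]$) by finding a witness in the difference, invoking the boxed-neighbourhood formula only in the residual case $\Gamma[a]=\Gamma[b]$ and $\Sigma[x]=\Sigma[y]$. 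Your projection argument bypasses all of this: from equality of the open neighbourhoods you read off $\Gamma[a]=\Gamma[b]$ and $\Sigma[x]=\Sigma[y]$ in one stroke, landing immediately in the final case. This is more economical and makes the role of the no-isolated-vertex hypothesis transparent (it is exactly what ensures the projections are surjective onto the closed neighbourhoods). The paper's approach, by contrast, has the minor advantage of producing concrete distinguishing vertices in each case, which some readers may find more tangible.
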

\begin{proof}
(iii)~Write $\Lambda=\Gamma\boxtimes\Sigma$. Let $a,b$ be two distinct vertices of $\Gamma$ and $x,y$ be two distinct vertices of $\Sigma$. We will complete the proof by confirming  two claims as follows.

\textsf{Claim 1.} $\Lambda(a,x)\neq \Lambda(a,y)$ and $\Lambda(a,x)\neq \Lambda(b,x)$.

If $x\sim_{\Sigma}y$, then $(a,x)\in \Lambda(a,y)$ but $(a,x)\notin \Lambda(a,x)$. Therefore $\Lambda(a,x)\neq \Lambda(a,y)$. Now we assume $x\nsim_{\Sigma}y$. Since $\Gamma$ has no isolated vertices, there exists $c\in \Gamma(a)$. Clearly, $(c,x)\in \Lambda(a,x)$ but $(c,x)\notin \Lambda(a,y)$. Thus we also have $\Lambda(a,x)\neq \Lambda(a,y)$.

By analogous reasoning, we conclude $\Lambda(a,x)\neq \Lambda(b,x)$.

\textsf{Claim 2.} $\Lambda(a,x)\neq \Lambda(b,y)$.

If $\Gamma[a]\neq\Gamma[b]$, then $\Gamma[a]\setminus\Gamma[b]\ne\emptyset$ or $\Gamma[b]\setminus\Gamma[a]\ne\emptyset$. Without loss of generality, assume $\Gamma[a]\setminus\Gamma[b]\ne\emptyset$. Let $c\in \Gamma[a]\setminus\Gamma[b]$. Then $\{c\}\times\Sigma(x)\subseteq \Lambda(a,x)$ but $\left(\{c\}\times\Sigma(x)\right)\cap \Lambda(b,y)=\emptyset$. Since $\Sigma$ has no isolated vertices, we have $\Sigma(x)\ne\emptyset$. Thus $\Lambda(a,x)\neq \Lambda(b,x)$. Similarly, $\Lambda(a,x)\neq \Lambda(b,x)$ whenever
$\Sigma[x]\neq\Sigma[y]$. Now we assume $\Gamma[a]=\Gamma[b]$ and $\Sigma[x]=\Sigma[y]$. Then $\Gamma[a]\times\Sigma[x]=\Gamma[b]\times\Sigma[y]$. Note that $(a,x)\neq (b,y)$. Since
\begin{equation*}
\Lambda(a,x)=\Gamma[a]\times\Sigma[x]\setminus\{(a,x)\}
~\mbox{and}~ \Lambda(b,y)=\Gamma[b]\times\Sigma[y]\setminus\{(b,y)\},
\end{equation*}
we get $\Lambda(a,x)\neq \Lambda(b,y)$.
\end{proof}
\begin{lem}
\label{cnbthss}
Let $\Gamma$ and $\Sigma$ be two graphs. Then
\begin{enumerate}
\item $\Gamma\ltimes\Sigma$ is connected if and only if both $\Gamma$ and $\Sigma$ are connected;
\item $\Gamma\ltimes\Sigma$ is non-bipartite if and only if $\Sigma$ is non-bipartite;
\item $\Gamma\ltimes\Sigma$ is $R$-thin if and only if $\Sigma$ is $R$-thin and $\Gamma[a]\ne\Gamma[b]$ for each pair of distinct vertices $a$ and $b$ of $\Gamma$.
\end{enumerate}
\end{lem}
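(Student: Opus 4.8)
The plan is to reduce each of the three assertions to a short computation built on the neighbourhood structure of $\Lambda := \Gamma\ltimes\Sigma$, exploiting the two features of its adjacency: an edge forces the second coordinates to be genuinely adjacent in $\Sigma$, whereas the first coordinates need only be equal or adjacent in $\Gamma$. The single most useful observation, which I would record first, is that every neighbourhood splits as a \emph{closed} neighbourhood in $\Gamma$ times an \emph{open} neighbourhood in $\Sigma$:
\[
\Lambda(a,x)=\Gamma[a]\times\Sigma(x).
\]
Parts (ii) and (iii) both rest on this, together with the fact that every edge step of $\Lambda$ projects to an edge of $\Sigma$.

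For (i), both directions come from tracking the two coordinates along walks. Since any edge of $\Lambda$ joins vertices whose first coordinates lie in a common component of $\Gamma$ (they are equal or $\Gamma$-adjacent) and whose second coordinates lie in a common component of $\Sigma$, a disconnection of either factor immediately disconnects $\Lambda$; this gives the forward implication by contraposition, testing the pairs $(a_1,x),(a_2,x)$ and $(a,x_1),(a,x_2)$. For the converse I would join $(a,x)$ to $(b,y)$ in two moves: first lift a $\Gamma$-path $a=a_0\sim\cdots\sim a_n=b$ to a walk $(a_0,z_0)\sim\cdots\sim(a_n,z_n)$, where $z_0=x$ and $z_0,\dots,z_n$ is any length-$n$ walk in $\Sigma$ (available because a connected $\Sigma$ with an edge admits walks of every length from any vertex), and then, with the first coordinate frozen at $b$, slide the second coordinate from $z_n$ to $y$ along a $\Sigma$-walk. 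The only delicate point is that $\Sigma$ must actually possess an edge, so the converse should be read with $\Sigma$ having no isolated vertices, in line with the hypotheses of Lemmas \ref{cnthcp} and \ref{cnthsp}.

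For (ii), I would use that each edge of $\Lambda$ projects onto an edge of $\Sigma$, so every closed walk in $\Lambda$ maps to a closed walk of the same length in $\Sigma$; an odd cycle of $\Lambda$ therefore forces an odd closed walk, hence an odd cycle, in $\Sigma$. Conversely an odd cycle $x_0\sim\cdots\sim x_{2k}\sim x_0$ of $\Sigma$ lifts, with first coordinate held fixed at any $a\in V(\Gamma)$, to the odd cycle $(a,x_0)\sim\cdots\sim(a,x_{2k})\sim(a,x_0)$ of $\Lambda$; equivalently, when $\Sigma$ is bipartite with parts $U,W$, the sets $V(\Gamma)\times U$ and $V(\Gamma)\times W$ bipartition $\Lambda$. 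The $\Gamma$-coordinate is irrelevant to parity precisely because the ``$a=b$'' option lets us keep it constant.

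Part (iii) is where the real work lies, and I expect it to be the main obstacle. Using $\Lambda(a,x)=\Gamma[a]\times\Sigma(x)$ and the elementary fact that, for nonempty sets, $\Gamma[a]\times\Sigma(x)=\Gamma[b]\times\Sigma(y)$ holds iff $\Gamma[a]=\Gamma[b]$ and $\Sigma(x)=\Sigma(y)$, I would show that $\Lambda$ fails to be $R$-thin exactly when either $\Sigma$ is not $R$-thin or some two distinct vertices of $\Gamma$ share a closed neighbourhood. Indeed, distinct $x\neq y$ with $\Sigma(x)=\Sigma(y)$ yield the equal-neighbourhood pair $(a,x),(a,y)$, while distinct $a\neq b$ with $\Gamma[a]=\Gamma[b]$ yield $(a,x),(b,x)$; conversely, any witnessing pair $(a,x)\neq(b,y)$ splits into the case $x\neq y$ (forcing $\Sigma(x)=\Sigma(y)$, so $\Sigma$ is not $R$-thin) and the case $x=y$, $a\neq b$ (forcing $\Gamma[a]=\Gamma[b]$). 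Negating yields the stated equivalence. The subtle points to flag are that the $\Gamma$-side condition involves \emph{closed} rather than open neighbourhoods, a direct consequence of the reflexive ``$a=b$'' clause, and that the product-equality step needs $\Sigma(x)\neq\emptyset$; thus, as in (i), the clean statement presumes $\Sigma$ has no isolated vertices (two would already violate $R$-thinness of $\Sigma$, while a lone one would break the equivalence).
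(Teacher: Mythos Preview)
Your proposal is correct and follows essentially the same route as the paper, which proves only part (iii) and does so via the same key identity $\Lambda(a,x)=\Gamma[a]\times\Sigma(x)$ followed by the same case split on which coordinate differs. Your treatment is in fact slightly more careful: you flag that the step ``$\Gamma[a]\times\Sigma(x)=\Gamma[b]\times\Sigma(y)\Rightarrow\Gamma[a]=\Gamma[b]$ and $\Sigma(x)=\Sigma(y)$'' needs $\Sigma(x)\neq\emptyset$, and hence that the equivalence in (iii) tacitly assumes $\Sigma$ has no isolated vertices; the paper states and uses this implication without comment.
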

\begin{proof}
(iii)~Write $\Lambda=\Gamma\ltimes\Sigma$. By the definition of $\Gamma\ltimes\Sigma$, we have that  $\Lambda(a,x)=\Lambda[a]\times\Lambda(x)$ for every vertex of $\Lambda$. Therefore two vertices $(a,x)$ and
$(b,y)$ of $\Gamma\ltimes\Sigma$ have the same neighbourhood if and only if $\Lambda[a]=\Lambda[b]$ and $\Lambda(x)=\Lambda(y)$.
Let $(a,x)\ne (b,y)$. Then $a\ne b$ or $x\ne y$.

First assume that $\Sigma$ is $R$-thin, and $\Gamma[a]\ne\Gamma[b]$ whenever $a\ne b$.  If $a=b$, then $x\ne y$. Since $\Sigma$ is $R$-thin, we have $\Sigma(x)\ne\Sigma(y)$ and it follows that $\Lambda(a,x)\ne\Lambda(b,y)$. If $a\ne b$, then $\Gamma[a]\ne\Gamma[b]$ and therefore $\Lambda(a,x)\ne\Lambda(b,y)$. Thus we always have $\Lambda(a,x)\ne\Lambda(b,y)$. This proves the sufficiency.

Now assume that $\Gamma\ltimes\Sigma$ is $R$-thin. Then $\Lambda(a,x)\ne\Lambda(a,y)$ whenever $x\ne y$ and $\Lambda(a,x)\ne\Lambda(b,x)$ whenever $a\ne b$. It follows that $\Sigma$ is $R$-thin and $\Gamma[a]\ne\Gamma[b]$ for each pair of distinct vertices $a$ and $b$ of $\Gamma$. This proves the necessity.
\end{proof}
\begin{lem}
\label{lpthin}
Let $\Gamma$ and $\Sigma$ be two graphs. Then
\begin{enumerate}
\item $\Gamma[\Sigma]$ is connected if and only if $\Gamma$ is connected;
\item $\Gamma[\Sigma]$ is bipartite if and only if both $\Gamma$ and $\Sigma$ are bipartite;
\item $\Gamma[\Sigma]$ is $R$-thin if and only if $\Sigma$ is $R$-thin.
\end{enumerate}
\end{lem}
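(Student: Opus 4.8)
The common engine for all three parts is the neighbourhood identity
\[
\Gamma[\Sigma](a,x)=\bigl(\Gamma(a)\times V(\Sigma)\bigr)\cup\bigl(\{a\}\times\Sigma(x)\bigr),
\]
which is immediate from the definition, together with the projection $\pi\colon V(\Gamma[\Sigma])\to V(\Gamma)$, $(a,x)\mapsto a$. Every edge of $\Gamma[\Sigma]$ is sent by $\pi$ either to an edge of $\Gamma$ (when the first coordinates differ) or collapsed to a vertex (when they coincide), so $\pi$ maps walks of $\Gamma[\Sigma]$ to walks of $\Gamma$. The plan is to establish these two facts first and use them throughout.

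For (i), the forward implication is immediate from $\pi$: a walk joining $(a,x)$ to $(b,y)$ projects to a walk joining $a$ to $b$, and $\pi$ is onto, so connectivity of $\Gamma[\Sigma]$ forces connectivity of $\Gamma$. For the converse I assume $\Gamma$ connected and nontrivial (the case $|V(\Gamma)|=1$ being degenerate, where $\Gamma[\Sigma]\cong\Sigma$). Two vertices in a common fibre $\{a\}\times V(\Sigma)$ are bridged in two steps through any fibre $\{c\}\times V(\Sigma)$ with $c\in\Gamma(a)$, since then $(a,x)\sim(c,z)\sim(a,y)$ for all $x,y,z$, and such a $c$ exists because $\Gamma$ is connected with at least two vertices. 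Vertices in different fibres are joined by lifting a $\Gamma$-path $a=c_0,\dots,c_k=b$ to $(c_0,x),(c_1,z_1),\dots,(c_{k-1},z_{k-1}),(c_k,y)$, consecutive terms being adjacent because their first coordinates are $\Gamma$-adjacent.

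For (ii) the necessity is the part I would set up carefully via odd cycles. An odd cycle of $\Gamma$ lifts, holding the second coordinate fixed, to an odd cycle of $\Gamma[\Sigma]$; an odd cycle of $\Sigma$ already lives inside a single fibre $\{a\}\times V(\Sigma)$, which induces a copy of $\Sigma$. Hence bipartiteness of $\Gamma[\Sigma]$ rules out odd cycles in either factor, giving that both are bipartite. The converse --- assembling a proper $2$-colouring of $V(\Gamma)\times V(\Sigma)$ out of fixed bipartitions of $\Gamma$ and of $\Sigma$ --- is the step I expect to demand the most care: both adjacency clauses must be reconciled at once, and one must check that no edge arising from $a\sim_\Gamma b$, and none arising from $a=b$ with $x\sim_\Sigma y$, becomes monochromatic. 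Pinning down exactly which configurations of the two factors are compatible is the delicate heart of this part.

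Finally, for (iii) I would read twins off the neighbourhood identity. If $x\neq y$ have equal neighbourhoods in $\Sigma$, then $(a,x)$ and $(a,y)$ have equal neighbourhoods in $\Gamma[\Sigma]$ for every $a$, so $\Sigma$ being $R$-thick forces $\Gamma[\Sigma]$ to be $R$-thick, which is the contrapositive of one direction. For the converse, with $\Sigma$ assumed $R$-thin, I take distinct $(a,x),(b,y)$. When $a=b$ the fibre-$a$ parts $\{a\}\times\Sigma(x)$ and $\{a\}\times\Sigma(y)$ differ, and they are not absorbed into $\Gamma(a)\times V(\Sigma)$ because $a\notin\Gamma(a)$, so the neighbourhoods differ. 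The case $a\neq b$ is the subtle one: comparing the fibre over $a$ (partial on one side, full or empty on the other) forces a difference, and I would track the precise hypothesis --- no isolated vertices of $\Sigma$, or $\Gamma$ itself $R$-thin --- needed to dispose of the lone degenerate configuration in which both $x$ and $y$ are isolated in $\Sigma$. This cross-fibre bookkeeping, shared with the converse of (ii), is where I anticipate the main obstacle.
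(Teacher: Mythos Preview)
For part (iii) your plan is essentially the paper's own proof: both rely on the neighbourhood identity $\Gamma[\Sigma](a,x)=(\Gamma(a)\times V(\Sigma))\cup(\{a\}\times\Sigma(x))$ and separate into the cases $a=b$, $a\sim_\Gamma b$, and $a\neq b$ with $a\nsim_\Gamma b$. (The paper supplies an argument only for (iii), treating (i) and (ii) as standard.) Your remark that the cross-fibre case may need an extra hypothesis is well taken; the paper's third case argues from $(\{b\}\times\Sigma(y))\cap\Lambda(a,x)=\emptyset$, which separates the two neighbourhoods only when $\Sigma(y)\neq\emptyset$.

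Your real difficulty with (ii), however, is not a matter of care: the converse is false as stated, so no amount of bookkeeping will close it. If $a\sim_\Gamma b$ and $x\sim_\Sigma y$, then $(a,x),(a,y),(b,x)$ are three distinct vertices forming a triangle in $\Gamma[\Sigma]$ --- the edge between $(a,x)$ and $(a,y)$ coming from the $\Sigma$-clause and the other two from the $\Gamma$-clause. Hence whenever each factor has at least one edge, $\Gamma[\Sigma]$ contains a triangle and is non-bipartite regardless of whether the factors are bipartite; for instance $K_2[K_2]\cong K_4$. Put differently, an edge of $\Gamma$ forces the two adjacent fibres to lie entirely in opposite colour classes, while an edge of $\Sigma$ forces a single fibre to be bichromatic, and these demands are incompatible --- precisely the tension you sensed. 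Your odd-cycle argument does establish the ``only if'' direction, and that is the only direction the paper actually invokes downstream, so the applications survive; but the biconditional in the lemma is overstated and you should not expect to complete it.
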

\begin{proof}
(iii) For convenience, set $\Lambda:=\Gamma[\Sigma]$. Then
\begin{equation}\label{neighbor}
\Lambda(a,x)=\left(\Gamma(a)\times V(\Sigma)\right)\cup \left(\{a\}\times\Sigma(x)\right)
\end{equation}
for every $(a,x)\in V(\Lambda)$.

We first prove the necessity. Suppose
$\Sigma$ is $R$-thick. Then there exist distinct vertices $x,y\in V(\Sigma)$ such that $\Sigma(x)=\Sigma(y)$. By equation \eqref{neighbor}, this implies $\Lambda(a,x)=\Lambda(a,y)$ for every $a\in V(\Gamma)$, and consequently, $\Lambda$ is $R$-thick. It follows that $\Sigma$ must be $R$-thin whenever $\Lambda$ is $R$-thin.

Now we prove the sufficiency. Assume that $\Sigma$ is $R$-thin. Let $(a,x)$ and $(b,y)$ be distinct vertices of $\Lambda$. Then $\Sigma(x)\ne\Sigma(y)$.
We analyze three cases using equation \eqref{neighbor}:

If $a=b$, then $\Gamma(a)\times V(\Sigma)=\Gamma(b)\times V(\Sigma)$. Since $\Sigma(x)\ne\Sigma(y)$, we get $\{a\}\times\Sigma(x)\ne\{b\}\times\Sigma(y)$. Thus $\Lambda(a,x)\ne\Lambda(a,y)$.

If $a\sim_{\Gamma}b$, then $(b,y)\in \Lambda(a,x)$. However,
$(b,y)\notin\Lambda(b,y)$. Therefore $\Lambda(a,x)\ne\Lambda(b,y)$.

If $a\neq b$ and $a\nsim_{\Gamma}b$, then  $\{b\}\times\Sigma(y)\cap \Lambda(a,x)=\emptyset$ and hence $\Lambda(a,x)\ne\Lambda(a,y)$.

In all cases, we have $\Lambda(a,x)\ne\Lambda(a,y)$. Thus $\Lambda$ is $R$-thin.
\end{proof}
\section{Proofs of Theorems \ref{directp} and \ref{dbundle}}
\label{sec4}

In this section, we prove Theorems \ref{directp} and \ref{dbundle}. For clarity, we will restate both theorems before proceeding with their proofs.
\begin{directp}
Let $\Gamma$ and $\Sigma$ be two graphs. We have
\begin{enumerate}
  \item if $\Gamma$ or $\Sigma$ is unstable, then $\Gamma\times\Sigma$ is unstable;
  \item if $S(\Sigma)$ and $S(\Gamma)$ are coprime with respect to Cartesian product and $\Gamma$ and $\Sigma$ are both stable, then $\Gamma\times\Sigma$ is stable;
  \item if $S(\Gamma)$ and $S(\Sigma)$ are coprime with respect to Cartesian product and $\Gamma\times\Sigma$ is nontrivially unstable, then either $\Gamma$ or  $\Sigma$ is nontrivially unstable.
\end{enumerate}
\end{directp}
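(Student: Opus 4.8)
Let me sketch proofs for all three parts, since they are interconnected.

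The plan is to work through the three parts using the structural machinery already assembled in the excerpt, principally Lemma \ref{TF} (a connected non-bipartite graph is unstable iff it has a nontrivial TF-morphism), Lemma \ref{sdc} ($S(\Gamma\times\Sigma)=S(\Gamma)\Box S(\Sigma)$ for $R$-thin graphs without isolated vertices), and Lemma \ref{tfsk} (any TF-morphism of a graph acts by automorphisms on its Cartesian skeleton).

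For part (i), I would first dispose of the easy degenerate cases: if the unstable factor (say $\Sigma$) is disconnected, $R$-thick, or bipartite-with-a-nontrivial-automorphism, I can transfer that instability directly to $\Gamma\times\Sigma$ — indeed Lemma \ref{product} is essentially tailored to this situation, taking $L=E(\Gamma)$ and $R=E(\Sigma)$ so that $\Gamma\ast\Sigma=\Gamma\times\Sigma$. So part (i) follows immediately from Lemma \ref{product}: if $\Sigma$ is unstable with $R=E(\Sigma)$, then $\Gamma\times\Sigma$ is unstable, and symmetrically for $\Gamma$. This is the cleanest part and requires no new argument.

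For part (iii), which is the converse-flavoured statement and the heart of the theorem, I would argue contrapositively using TF-morphisms. Assume $\Gamma\times\Sigma$ is nontrivially unstable — hence connected, $R$-thin, non-bipartite. By Lemma \ref{cnthdp}, being $R$-thin and non-bipartite forces both $\Gamma$ and $\Sigma$ to be $R$-thin and non-bipartite, and connectedness/non-bipartiteness of the product forces both factors connected (with at least one non-bipartite, but in fact both are non-bipartite here). By Lemma \ref{TF}, $\Gamma\times\Sigma$ admits a nontrivial TF-morphism $(\Phi,\Psi)$ of permutations on $V(\Gamma)\times V(\Sigma)$. By Lemma \ref{tfsk}, both $\Phi,\Psi\in\Aut(S(\Gamma\times\Sigma))$, and by Lemma \ref{sdc} this equals $\Aut(S(\Gamma)\Box S(\Sigma))$. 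Now I invoke the coprimality hypothesis: since $S(\Gamma)$ and $S(\Sigma)$ are coprime with respect to the Cartesian product, the standard uniqueness-of-Cartesian-factorization theory (Sabidussi/Vizing-type results for connected graphs) forces automorphisms of the Cartesian product to respect the factorization, i.e. $\Aut(S(\Gamma)\Box S(\Sigma))\cong\Aut(S(\Gamma))\times\Aut(S(\Sigma))$ with each factor acting coordinate-wise. Thus $\Phi=(\phi_1,\phi_2)$ and $\Psi=(\psi_1,\psi_2)$ split as products of coordinate permutations. Translating the TF-morphism condition $(a,x)\sim u^{\Phi}\sim_{}v^{\Psi}$ back through the adjacency definition of the direct product, I expect the conditions to separate into $(\phi_1,\psi_1)$ being a TF-morphism of $\Gamma$ and $(\phi_2,\psi_2)$ being a TF-morphism of $\Sigma$. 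Since $(\Phi,\Psi)$ is nontrivial, at least one coordinate pair is nontrivial, so by Lemma \ref{TF} the corresponding factor is (nontrivially) unstable.

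For part (ii), I would likewise argue by contraposition: suppose $\Gamma\times\Sigma$ is unstable while both factors are stable. First I reduce to the nontrivial case — stability of $\Gamma$ and $\Sigma$ means each is $R$-thin, and I must check the product cannot be unstable through disconnection, $R$-thickness, or bipartiteness; Lemma \ref{cnthdp} governs these, and the stable factors being $R$-thin non-bipartite (a stable graph with $K_2$ unstable reasoning) should rule out the trivial instabilities, leaving $\Gamma\times\Sigma$ nontrivially unstable. Then part (iii) applies directly to conclude one factor is nontrivially unstable, contradicting stability. The main obstacle, and the step I would spend the most care on, is the coprimality-to-factorization implication in part (iii): I must cite or establish that coprime connected graphs have automorphism groups of their Cartesian product splitting as the direct product of the factor automorphism groups, and verify the factors $S(\Gamma),S(\Sigma)$ are connected (this uses that $\Gamma,\Sigma$ are connected non-bipartite, via properties of the Cartesian skeleton) so that the uniqueness theory applies. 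The delicate bookkeeping is then checking that the split TF-morphism conditions genuinely decouple across the two coordinates — this relies on the product adjacency $(a,x)\sim(b,y)\iff a\sim_{\Gamma}b$ and $x\sim_{\Sigma}y$ cleanly factoring under coordinate-wise permutations.
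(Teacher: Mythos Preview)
Your proposal is correct and uses the same machinery as the paper --- Lemma~\ref{product} for part~(i), and the chain Lemma~\ref{tfsk} $\to$ Lemma~\ref{sdc} $\to$ coprimality splitting of $\Aut(S(\Gamma)\Box S(\Sigma))$ $\to$ coordinate TF-morphisms for the other parts. The only difference is organizational: the paper proves (ii) directly by taking an arbitrary TF-morphism of $\Gamma\times\Sigma$, splitting it, and using stability of each factor to force triviality, and then derives (iii) from (ii) together with Lemma~\ref{cnthdp}; you instead run the TF-splitting argument directly under the hypotheses of (iii) and then deduce (ii) from (iii) by contraposition. Both orderings are fine and the underlying argument is identical. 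Your caution about needing $S(\Gamma)$ and $S(\Sigma)$ connected for the Sabidussi-type factorization of $\Aut(S(\Gamma)\Box S(\Sigma))$ is well placed --- the paper tacitly uses this too, and it follows from standard properties of the Cartesian skeleton (connected $R$-thin non-bipartite $\Gamma$ has connected $S(\Gamma)$).
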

\begin{proof}

(i) It is obvious that the direct product $\Gamma\times\Sigma$ is exactly the graph $\Gamma\ast\Sigma$ defined in Lemma \ref{product} with $L=E(\Gamma)$ and $R=E(\Sigma)$. Therefore $\Gamma\times\Sigma$ is unstable if one of $\Gamma$ and $\Sigma$ is unstable.

(ii)
Since $\Gamma$ and $\Sigma$ are stable, they are connected, non-bipartite, and \(R\)-thin, and, in particular, they contain no isolated vertices.
Let $(\alpha,\beta)$ be an arbitrary TF-morphism of $\Gamma\times\Sigma$. By Lemma \ref{TF}, it suffices to show that  $(\alpha,\beta)$ is trivial, that is, $\alpha=\beta$. By Lemma \ref{tfsk}, we have $\alpha,\beta\in\Aut(S(\Gamma\times\Sigma))$. By Lemma \ref{sdc}, we get $S(\Gamma\times\Sigma)=S(\Gamma)\Box S(\Sigma)$ and
therefore $\alpha,\beta\in\Aut(S(\Gamma)\Box S(\Sigma))$. Since $S(\Sigma)$ and $S(\Gamma)$ are coprime with respect to Cartesian product, we have $\Aut(S(\Gamma)\Box S(\Sigma))=\Aut(S(\Gamma))\times \Aut(S(\Sigma))$ and it follows that $\alpha,\beta\in\Aut(S(\Gamma))\times \Aut(S(\Sigma))$. Thus we can write $\alpha=(\alpha_1,\alpha_2)$ and $\beta=(\beta_1,\beta_2)$ where $\alpha_1,\beta_1\in\Aut(S(\Gamma))$ and
$\alpha_2,\beta_2\in\Aut(S(\Sigma))$. In particular, both  $\alpha_1$ and $\beta_1$ are permutations on $V(\Gamma)$ and
both $\alpha_2$ and $\beta_2$ are permutations on $V(\Sigma)$.
Since $(\alpha,\beta)$ is a TF-morphism of $\Gamma\times\Sigma$, we have
\begin{align*}
  (a,x)\sim_{\Gamma\times\Sigma}
  (b,y)\Longleftrightarrow&(a,x)^{\alpha}
  \sim_{\Gamma\times\Sigma}
  (b,y)^{\beta}\\
  \Longleftrightarrow&(a^{\alpha_{1}},x^{\alpha_{2}})
  \sim_{\Gamma\times\Sigma}
  (b^{\beta_{1}},y^{\beta_{2}})\\
  \Longleftrightarrow&a^{\alpha_{1}}
  \sim_{\Gamma}b^{\beta_{1}}
  ~\mbox{and}~
  x^{\alpha_{2}}\sim_{\Sigma}y^{\beta_{2}}
\end{align*}
for every pair of elements $(a,x)$ and $(b,y)$ of $\Gamma\times\Sigma$. Thus $a\sim_{\Gamma}b$ implies $b^{\alpha_{1}}\sim_{\Gamma}b^{\beta_{1}}$, that is, $(\alpha_1,\beta_1)$ is TF-morphism of $\Gamma$. Since $\Gamma$ is stable, we have $\alpha_1=\beta_1$. Similarly, $\alpha_2=\beta_2$. It follows that $\alpha=\beta$ and therefore $\Gamma\times\Sigma$ is stable.

(iii) Since $\Gamma\times\Sigma$ is nontrivially unstable, $\Gamma\times\Sigma$ is an unstable graph which is connected, $R$-thin and non-bipartite. By the conclusion of (ii), either $\Gamma$ or $\Sigma$ is unstable. By Lemma \ref{cnthdp}, both $\Gamma$ and $\Sigma$ are connected, $R$-thin and non-bipartite. Thus either $\Gamma$ or $\Sigma$ is nontrivially unstable.
\end{proof}
By using Theorem \ref{directp}, we prove the following result on the stability of graph pairs.
\begin{coro}
\label{direct}
Let $\Gamma$ and $\Sigma$ be two non-bipartite stable graphs whose orders are coprime. Then the graph pair $(\Gamma,\Sigma\times K_2)$ is stable.
\end{coro}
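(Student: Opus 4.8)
The plan is to reduce the stability of the pair $(\Gamma,\Sigma\times K_2)$ to two facts about the single product $\Gamma\times\Sigma$, exploiting associativity and commutativity of the direct product through the natural isomorphism $\Gamma\times(\Sigma\times K_2)\cong(\Gamma\times\Sigma)\times K_2$ given by $(a,(x,i))\mapsto((a,x),i)$. Under this identification the subgroup $\Aut(\Gamma)\times\Aut(\Sigma\times K_2)$ of $\Aut(\Gamma\times(\Sigma\times K_2))$ corresponds to a subgroup of $\Aut((\Gamma\times\Sigma)\times K_2)$, so it suffices to prove the equality
\[
\Aut\big((\Gamma\times\Sigma)\times K_2\big)=\Aut(\Gamma)\times\Aut(\Sigma\times K_2).
\]

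First I would translate coprimality of orders into coprimality of Cartesian skeletons. Since $S(\Gamma)$ and $S(\Sigma)$ are spanning subgraphs of $B(\Gamma)$ and $B(\Sigma)$, they have vertex sets $V(\Gamma)$ and $V(\Sigma)$; as the number of vertices of a Cartesian product equals the product of the orders of its factors, any nontrivial common Cartesian factor $\Lambda$ of $S(\Gamma)$ and $S(\Sigma)$ would have $|V(\Lambda)|\geq 2$ dividing both $|V(\Gamma)|$ and $|V(\Sigma)|$, contradicting their coprimality. Hence $S(\Gamma)$ and $S(\Sigma)$ are coprime with respect to the Cartesian product. Because $\Gamma$ and $\Sigma$ are stable (thus connected, non-bipartite and $R$-thin), Theorem \ref{directp}(ii) then gives that $\Gamma\times\Sigma$ is stable, i.e.\ $\Aut\big((\Gamma\times\Sigma)\times K_2\big)=\Aut(\Gamma\times\Sigma)\times\mathbb{Z}_2$.

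The main step, and the point requiring the most care, is to show that the pair $(\Gamma,\Sigma)$ is itself stable, namely $\Aut(\Gamma\times\Sigma)=\Aut(\Gamma)\times\Aut(\Sigma)$; our main theorems are phrased for single-graph stability (pairs with $K_2$) rather than pair stability, so this needs a direct argument. I would take $\sigma\in\Aut(\Gamma\times\Sigma)$ and note that $(\sigma,\sigma)$ is a TF-morphism of $\Gamma\times\Sigma$ by Remark \ref{rtfs}(i). By Lemma \ref{tfsk}, Lemma \ref{sdc} and the coprimality established above,
\[
\sigma\in\Aut\big(S(\Gamma\times\Sigma)\big)=\Aut\big(S(\Gamma)\Box S(\Sigma)\big)=\Aut(S(\Gamma))\times\Aut(S(\Sigma)),
\]
so $\sigma=(\sigma_1,\sigma_2)$ acts coordinatewise with $\sigma_1,\sigma_2$ permutations of $V(\Gamma),V(\Sigma)$. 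Since neither $\Gamma$ nor $\Sigma$ has isolated vertices, the adjacency condition for $\sigma\in\Aut(\Gamma\times\Sigma)$ (applied to $\sigma$ and to $\sigma^{-1}$) forces $\sigma_1\in\Aut(\Gamma)$ and $\sigma_2\in\Aut(\Sigma)$. Together with the reverse containment already recorded in the introduction, this yields $\Aut(\Gamma\times\Sigma)=\Aut(\Gamma)\times\Aut(\Sigma)$. This is essentially the argument of Theorem \ref{directp}(ii) run on the diagonal TF-morphism $(\sigma,\sigma)$.

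Finally I would assemble the pieces. Since $\Sigma$ is stable, $\Aut(\Sigma\times K_2)=\Aut(\Sigma)\times\mathbb{Z}_2$, so combining the three equalities obtained above,
\[
\Aut(\Gamma\times\Sigma\times K_2)=\Aut(\Gamma\times\Sigma)\times\mathbb{Z}_2=\Aut(\Gamma)\times\Aut(\Sigma)\times\mathbb{Z}_2=\Aut(\Gamma)\times\Aut(\Sigma\times K_2),
\]
where the factors regroup exactly along the identification $\Gamma\times(\Sigma\times K_2)\cong(\Gamma\times\Sigma)\times K_2$. Hence $(\Gamma,\Sigma\times K_2)$ is stable. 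The genuine obstacle is the pair stability of $(\Gamma,\Sigma)$ in the third paragraph; the remaining steps are bookkeeping with the associativity isomorphism and the passage from coprimality of orders to coprimality of Cartesian skeletons.
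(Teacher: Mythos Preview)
Your proof is correct and follows the same three-step structure as the paper: show $\Gamma\times\Sigma$ is stable via Theorem~\ref{directp}(ii), show the pair $(\Gamma,\Sigma)$ is stable, and combine with the stability of $\Sigma$. The only difference is that for the pair stability of $(\Gamma,\Sigma)$ the paper invokes \cite[Theorem~8.18]{HIK2011} (using that coprime orders force $\Gamma$ and $\Sigma$ to be coprime with respect to the direct product), whereas you supply a self-contained argument by rerunning the Cartesian-skeleton machinery from the proof of Theorem~\ref{directp}(ii) on the diagonal TF-morphism $(\sigma,\sigma)$.
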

\begin{proof}
Since both $\Gamma$ and $\Sigma$ are stable, they are connected and $R$-thin. Since the orders of $\Gamma$ and $\Sigma$ are coprime, we conclude that $\Gamma$ and $\Sigma$ are coprime with respect to direct product and $S(\Sigma)$ and $S(\Gamma)$ are coprime with respect to Cartesian product. It follows from Theorem \ref{directp} (ii) that $\Gamma\times\Sigma$ is stable. Therefore $\Aut(\Gamma\times\Sigma\times K_2)=\Aut(\Gamma\times\Sigma)\times \mathbb{Z}_2$. Since a pair of two connected non-bipartite graphs is stable if the two graphs are both $R$-thin and they are coprime with respect to direct product \cite[Theorem 8.18]{HIK2011}, we conclude that the pair $(\Gamma,\Sigma)$ is stable and it follows that
\begin{equation*}
  \Aut(\Gamma\times\Sigma\times K_2)=\Aut(\Gamma\times\Sigma)\times \mathbb{Z}_2=\Aut(\Gamma)\times\Aut(\Sigma)\times \mathbb{Z}_2.
\end{equation*}
Since $\Sigma$ is stable, we have $\Aut(\Sigma\times K_2)=\Aut(\Sigma)\times \mathbb{Z}_2$ and hence
\begin{equation*}
\Aut(\Gamma\times\Sigma\times K_2)=\Aut(\Gamma)\times\Aut(\Sigma\times K_2).
\end{equation*}
Therefore $(\Gamma,\Sigma\times K_2)$ is stable.
\end{proof}

\begin{dbundle}
Let $\Gamma$ and $\Sigma$ be two graphs and $p$ be a mapping from $V(\Gamma)\times V(\Gamma)$ to $\Aut(\Sigma)$ such that $p(a,b)=p(b,a)^{-1}$ for every $(a,b)\in V(\Gamma)\times V(\Gamma)$. Then  $\Gamma\times^{p}\Sigma$ is unstable if one of the following two statements holds:
\begin{enumerate}
  \item $\Gamma$ has a nontrivial TF-morphism $(\alpha,\beta)$ such that
  $p(a^{\alpha},b^{\beta})=p(a,b)$ for every $(a,b)\in V(\Gamma)\times V(\Gamma)$;
  \item there is a TF-morphism $(\alpha,\beta)$ of $\Sigma$ and a permutation $\theta$ on $V(\Sigma)$ such that $\alpha\neq\theta$ and $\theta p(a,b)^{-1}=p(a,b)^{-1}\beta$ for every $(a,b)\in V(\Gamma)\times V(\Gamma)$.
\end{enumerate}
\end{dbundle}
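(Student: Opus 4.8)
The plan is to prove both parts by exhibiting a \emph{nontrivial} TF-morphism of $\Lambda:=\Gamma\times^{p}\Sigma$ and then invoking the general principle that any graph carrying a nontrivial TF-morphism is unstable. This principle holds without the connectedness or non-bipartiteness hypotheses of Lemma \ref{TF}: given a TF-morphism $(A,B)$ of $\Lambda$ with $A\ne B$, the map $\gamma$ on $V(\Lambda)\times\{0,1\}$ defined by $\gamma\colon (v,0)\mapsto(v^{A},0)$ and $(v,1)\mapsto(v^{B},1)$ is an automorphism of the canonical double cover $\Lambda\times K_2$. Indeed, Remark \ref{rtfs} (ii) upgrades the one-sided TF-condition to the biconditional $u\sim_{\Lambda}v\Leftrightarrow u^{A}\sim_{\Lambda}v^{B}$ on distinct vertices, which governs exactly the edges present in $\Lambda\times K_2$. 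Since $\gamma$ fixes each fibre setwise yet acts differently on the two fibres, it cannot lie in $\Aut(\Lambda)\times\mathbb{Z}_2$, so $\Lambda$ is unstable. Thus the whole argument reduces to building $(A,B)$ in each case, and only the one-sided defining implication of a TF-morphism needs to be verified.

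For statement (i), I would take $A=(\alpha,\mathrm{id})$ and $B=(\beta,\mathrm{id})$ as permutations of $V(\Gamma)\times V(\Sigma)$, acting by $\alpha$ (resp.\ $\beta$) on the first coordinate and trivially on the second. If $(a,x)\sim_{\Lambda}(b,y)$, then $a\sim_{\Gamma}b$ and $x\sim_{\Sigma}y^{p(a,b)^{-1}}$; the TF-morphism property of $(\alpha,\beta)$ on $\Gamma$ gives $a^{\alpha}\sim_{\Gamma}b^{\beta}$, and the hypothesis $p(a^{\alpha},b^{\beta})=p(a,b)$ lets me rewrite the condition as $x\sim_{\Sigma}y^{p(a^{\alpha},b^{\beta})^{-1}}$, whence $(a^{\alpha},x)\sim_{\Lambda}(b^{\beta},y)$, i.e.\ $(a,x)^{A}\sim_{\Lambda}(b,y)^{B}$. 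Nontriviality is immediate since $\alpha\ne\beta$ forces $A\ne B$.

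For statement (ii), I would instead take $A=(\mathrm{id},\alpha)$ and $B=(\mathrm{id},\theta)$, acting trivially on the first coordinate and by $\alpha$ (resp.\ $\theta$) on the second. The crux is the relation $\theta p(a,b)^{-1}=p(a,b)^{-1}\beta$, which lets me trade the action of $\theta$ through $p(a,b)^{-1}$ for the action of $\beta$: if $(a,x)\sim_{\Lambda}(b,y)$, writing $z=y^{p(a,b)^{-1}}$ gives $a\sim_{\Gamma}b$ and $x\sim_{\Sigma}z$, so the TF-morphism property of $(\alpha,\beta)$ on $\Sigma$ yields $x^{\alpha}\sim_{\Sigma}z^{\beta}$; then $z^{\beta}=y^{p(a,b)^{-1}\beta}=y^{\theta p(a,b)^{-1}}=(y^{\theta})^{p(a,b)^{-1}}$, so $x^{\alpha}\sim_{\Sigma}(y^{\theta})^{p(a,b)^{-1}}$ and hence $(a,x^{\alpha})\sim_{\Lambda}(b,y^{\theta})$. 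Nontriviality follows from $\alpha\ne\theta$.

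I expect the only genuinely delicate point to be bookkeeping rather than substance: one must check the TF-condition using solely the forward implication in the definition, so that in case (ii) the degenerate possibility $x=z$ never intervenes, since $x\sim_{\Sigma}z$ already forces $x\ne z$; one then relies on Remark \ref{rtfs} (ii) to recover the two-sided adjacency equivalence that makes $\gamma$ an automorphism of the double cover. The fact that the conjugation identity $p(a,b)^{-1}\beta=\theta p(a,b)^{-1}$ is required to hold simultaneously for all pairs $(a,b)$ is precisely what renders the second construction coordinate-independent, and isolating its role is the main conceptual step.
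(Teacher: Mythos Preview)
Your proof is correct and follows essentially the same route as the paper: in each case you construct exactly the same TF-morphism of $\Gamma\times^{p}\Sigma$---namely $((\alpha,1),(\beta,1))$ for~(i) and $((1,\alpha),(1,\theta))$ for~(ii)---and verify adjacency just as the paper does. The one point where you are more careful than the paper is in the final step: the paper simply invokes Lemma~\ref{TF}, whose hypotheses (connected, non-bipartite) are not part of the theorem's assumptions, whereas you correctly observe that the direction ``nontrivial TF-morphism $\Rightarrow$ unstable'' holds for arbitrary graphs and sketch the standard double-cover argument for it.
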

\begin{proof}
(i) Let $(\alpha,\beta)$ be a nontrivial TF-morphism of $\Gamma$ such that $p(a^{\alpha},b^{\beta})=p(a,b)$ for every $(a,b)\in V(\Gamma)\times V(\Gamma)$. Then $\alpha\ne\beta$ and
\begin{align*}
  (a,x)\sim_{\Gamma\times^{p}\Sigma}(b,y)\Longleftrightarrow&
  a\sim_{\Gamma}b~\mbox{and}~x\sim_{\Sigma}y^{p(a,b)^{-1}} \\
  \Longleftrightarrow& a^{\alpha}\sim_{\Gamma}b^{\beta}~ \mbox{and}~x\sim_{\Sigma}y^{p(a^{\alpha},b^{\beta})^{-1}}\\
  \Longleftrightarrow& (a^{\alpha},x)\sim_{\Gamma\times^{p}\Sigma} (b^{\beta},y)\\
  \Longleftrightarrow& (a,x)^{(\alpha, 1)}\sim_{\Gamma\times^{p}\Sigma} (b,y)^{(\beta, 1)}.
\end{align*}
Therefore $((\alpha, 1),(\beta, 1))$ is a nontrivial TF-morphism of $\Gamma\times\Sigma$.
By Lemma \ref{TF}, $\Gamma\times^{p}\Sigma$ is unstable.

(ii) Since $(\alpha,\beta)$ is a TF-morphism of $\Sigma$ and $\theta p(a,b)^{-1}=p(a,b)^{-1}\beta$ for every $(a,b)\in V(\Gamma)\times V(\Gamma)$, we have
\begin{align*}
  (a,x)\sim_{\Gamma\times^{p}\Sigma}(b,y)\Longleftrightarrow&
  a\sim_{\Gamma}b~\mbox{and}~x\sim_{\Sigma}y^{p(a,b)^{-1}} \\
  \Longleftrightarrow& a\sim_{\Gamma}b~ \mbox{and}~x^{\alpha}\sim_{\Sigma}y^{p(a,b)^{-1}\beta}\\
   \Longleftrightarrow& a\sim_{\Gamma}b~ \mbox{and}~x^{\alpha}\sim_{\Sigma}y^{\theta p(a,b)^{-1}}\\
  \Longleftrightarrow& (a,x^{\alpha})\sim_{\Gamma\times^{p}\Sigma} (b,y^{\theta})\\
  \Longleftrightarrow& (a,x)^{(1,\alpha)}\sim_{\Gamma\times^{p}\Sigma} (b,y)^{(1,\theta)}.
\end{align*}
Therefore $((1,\alpha),(1,\theta))$ is a TF-morphism of $\Gamma\times\Sigma$. Since $\alpha\ne\theta$, we have $(1,\alpha)\ne(1,\theta)$.
By Lemma \ref{TF}, $\Gamma\times^{p}\Sigma$ is unstable.
\end{proof}

\section{Proof of Theorem \ref{others}}
\label{sec5}

The proof of Theorem \ref{others} is structured around the following five propositions.
\begin{pro}
\label{car}
Let $\Gamma$ and $\Sigma$ be two graphs. If one of $\Gamma$ and $\Sigma$ is bipartite and the other has a nontrivial TF-morphism, then $\Gamma\Box\Sigma$ is unstable.
\end{pro}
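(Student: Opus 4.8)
The plan is to build an explicit nontrivial TF-morphism of $\Gamma\Box\Sigma$ by combining the nontrivial TF-morphism of one factor with the bipartition of the other, and then to promote it to a non-standard automorphism of the canonical double cover. Since $\Gamma\Box\Sigma\cong\Sigma\Box\Gamma$, I may assume without loss of generality that $\Sigma$ is bipartite, with bipartition $\{X,Y\}$ of $V(\Sigma)$, and that $\Gamma$ carries a nontrivial TF-morphism $(\alpha,\beta)$, so $\alpha\ne\beta$.

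First I would define two permutations $\hat\alpha,\hat\beta$ on $V(\Gamma)\times V(\Sigma)$ that keep the $\Sigma$-coordinate fixed and, on each fibre $V(\Gamma)\times\{x\}$, apply $\alpha$ or $\beta$ according to the part containing $x$: set $(a,x)^{\hat\alpha}=(a^{\alpha},x)$ and $(a,x)^{\hat\beta}=(a^{\beta},x)$ when $x\in X$, and $(a,x)^{\hat\alpha}=(a^{\beta},x)$, $(a,x)^{\hat\beta}=(a^{\alpha},x)$ when $x\in Y$. Both maps are permutations, since on every fibre they act by a permutation of $V(\Gamma)$, and they are distinct because $\alpha\ne\beta$ and $V(\Sigma)\ne\emptyset$; hence the pair is nontrivial.

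Next I would verify that $(\hat\alpha,\hat\beta)$ is a TF-morphism of $\Gamma\Box\Sigma$ by checking the two kinds of edges. For a vertical edge ($a=b$ and $x\sim_{\Sigma}y$) the endpoints $x,y$ lie in different parts, so $\hat\alpha$ and $\hat\beta$ apply the \emph{same} element of $\{\alpha,\beta\}$ to the first coordinate and the image is again a vertical edge. For a horizontal edge ($a\sim_{\Gamma}b$ and $x=y$) the two endpoints share a part: if that part is $X$ the required relation $a^{\alpha}\sim_{\Gamma}b^{\beta}$ is exactly the defining property of $(\alpha,\beta)$, while if it is $Y$ one needs $a^{\beta}\sim_{\Gamma}b^{\alpha}$, which follows by applying $(\alpha,\beta)$ to the edge $b\sim_{\Gamma}a$ together with the symmetry of $E(\Gamma)$. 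By Remark \ref{rtfs} these implications are in fact equivalences.

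Finally I would conclude instability. The subtle point is that $\Gamma\Box\Sigma$ may itself be bipartite---precisely when both factors are bipartite, by Lemma \ref{cnthcp}---so Lemma \ref{TF} is not directly available. Instead I would use $(\hat\alpha,\hat\beta)$ to define the part-preserving permutation $\phi$ of $(\Gamma\Box\Sigma)\times K_2$ given by $((a,x),0)^{\phi}=\big((a,x)^{\hat\alpha},0\big)$ and $((a,x),1)^{\phi}=\big((a,x)^{\hat\beta},1\big)$. The TF-morphism property, together with its converse from Remark \ref{rtfs}, shows that $\phi$ preserves and reflects the edges of the double cover, so $\phi\in\Aut\big((\Gamma\Box\Sigma)\times K_2\big)$; and since $\hat\alpha\ne\hat\beta$, the map $\phi$ acts differently on the two fibres and therefore cannot lie in $\Aut(\Gamma\Box\Sigma)\times\mathbb{Z}_2$, proving $\Gamma\Box\Sigma$ unstable. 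I expect the main obstacle to be the case bookkeeping in the verification---especially the horizontal edges inside $Y$, handled via the symmetry of $E(\Gamma)$---and making the concluding step uniform so that it covers the bipartite case where Lemma \ref{TF} does not apply.
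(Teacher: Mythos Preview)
Your proposal is correct and mirrors the paper's proof: with the roles of $\Gamma$ and $\Sigma$ swapped, the paper defines the same pair of permutations (called $\kappa,\tau$ there) by applying $\alpha$ or $\beta$ to one coordinate according to which part of the bipartition the other coordinate lies in, checks the two edge types exactly as you do, and concludes instability via Lemma~\ref{TF}. Your final paragraph is in fact more careful than the paper's---the paper simply cites Lemma~\ref{TF} without addressing the possibility that $\Gamma\Box\Sigma$ is bipartite or disconnected, whereas your explicit construction of $\phi$ on the double cover handles all cases uniformly.
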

\begin{proof}
Without loss of generality, assume that $\Gamma$ is bipartite and $\Sigma$ has a nontrivial TF-morphism.
Let $(\alpha,\beta)$ be a nontrivial TF-morphism of $\Sigma$. Then   $\alpha\ne\beta$. Define two permutations $\kappa$ and  $\tau$ on $V(\Gamma)\times V(\Sigma)$ as follows:
\begin{equation*}
 (a,x)^{\kappa}=\left\{
                   \begin{array}{ll}
                     (a,x^{\alpha}), & \hbox{if}~a\in U; \\
                     (a,x^{\beta}), & \hbox{if}~a\in W
                   \end{array}
                 \right.
~\mbox{and}~
(a,x)^{\tau}=\left\{
                   \begin{array}{ll}
                     (a,x^{\beta}), & \hbox{if}~a\in U; \\
                     (a,x^{\alpha}), & \hbox{if}~a\in W
                   \end{array}
                 \right.
\end{equation*}
where $\{U,W\}$ is a bipartition of $\Gamma$. Clearly, $\kappa\neq \tau$.
Let $a\in U$, $b\in W$ and $x,y\in V(\Sigma)$. Then
\begin{align*}
  (a,x)\sim_{\Gamma\Box\Sigma}(a,y)\Longleftrightarrow&
 x\sim_{\Sigma}y \\
  \Longleftrightarrow&
  x^{\alpha}\sim_{\Sigma}y^{\beta} \\
  \Longleftrightarrow& (a,x^{\alpha})\sim_{\Gamma\Box\Sigma} (a,y^{\beta})\\
  \Longleftrightarrow& (a,x)^{\kappa}\sim_{\Gamma\Box\Sigma} (a,y)^{\tau}
\end{align*}
and
\begin{align*}
  (a,x)\sim_{\Gamma\Box\Sigma}(b,x)\Longleftrightarrow&
 a\sim_{\Gamma}b \\
  \Longleftrightarrow&
   (a,x^{\alpha})\sim_{\Gamma\Box\Sigma} (b,x^{\alpha})\\
  \Longleftrightarrow& (a,x)^{\kappa}\sim_{\Gamma\Box\Sigma} (b,x)^{\tau}.
\end{align*}
Similarly,
\begin{equation*}
(b,x)\sim_{\Gamma\Box\Sigma}(b,y)\Longleftrightarrow (b,x)^{\kappa}\sim_{\Gamma\Box\Sigma} (b,y)^{\tau}
\end{equation*}
and
\begin{equation*} (b,x)\sim_{\Gamma\Box\Sigma}(a,x)\Longleftrightarrow (b,x)^{\kappa}\sim_{\Gamma\Box\Sigma} (a,x)^{\tau}.
\end{equation*}
Therefore $(\kappa,\tau)$ is a nontrivial TF-morphism of $\Gamma\Box\Sigma$. By Lemma \ref{TF}, $\Gamma\Box\Sigma$ is unstable.
\end{proof}
\begin{pro}
\label{str}
Let $\Gamma$ and $\Sigma$ be two connected graphs. If one of $\Gamma$ and $\Sigma$ is bipartite and the other is $R$-thick, then $\Gamma\boxtimes\Sigma$ is unstable.
\end{pro}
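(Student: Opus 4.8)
The plan is to apply Lemma \ref{TF}: by Lemma \ref{cnthsp} the strong product $\Gamma\boxtimes\Sigma$ of two connected graphs is connected and non-bipartite, so it suffices to produce a \emph{nontrivial} TF-morphism of $\Gamma\boxtimes\Sigma$. By the commutativity of the strong product I may assume without loss of generality that $\Gamma$ is bipartite, with bipartition $\{U,W\}$, and that $\Sigma$ is $R$-thick (the degenerate case $\Gamma\cong K_1$ reduces at once to the instability of the $R$-thick graph $\Sigma$ itself).

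Since $\Sigma$ is $R$-thick, I would fix distinct vertices $x_0,y_0\in V(\Sigma)$ with $\Sigma(x_0)=\Sigma(y_0)$. As $x_0\notin\Sigma(x_0)=\Sigma(y_0)$, the twins are non-adjacent, so the transposition $\sigma=(x_0\ y_0)$ is a nontrivial automorphism of $\Sigma$. The decisive observation is that $\sigma$ plays a double role: it is a genuine element of $\Aut(\Sigma)$, and simultaneously the pair $(1,\sigma)$ is a nontrivial TF-morphism of $\Sigma$, since $u\sim_{\Sigma}v\Longleftrightarrow u\sim_{\Sigma}v^{\sigma}$ for all $u,v\in V(\Sigma)$ (the twins share a neighbourhood). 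Exploiting the bipartition of $\Gamma$, I would then mimic the construction of Proposition \ref{car}, defining permutations $\kappa,\tau$ of $V(\Gamma)\times V(\Sigma)$ that fix the first coordinate and act on the second one as the identity or as $\sigma$ according to the $\Gamma$-part: for $a\in U$ put $(a,x)^{\kappa}=(a,x)$ and $(a,x)^{\tau}=(a,x^{\sigma})$, while for $a\in W$ put $(a,x)^{\kappa}=(a,x^{\sigma})$ and $(a,x)^{\tau}=(a,x)$. Because $\sigma\neq 1$, the maps disagree on $U$, hence $\kappa\neq\tau$.

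It then remains to verify that $(\kappa,\tau)$ preserves adjacency in $\Gamma\boxtimes\Sigma$, checking the three kinds of edges separately: the fibre edges with $a=b$ and $x\sim_{\Sigma}y$, the horizontal edges with $a\sim_{\Gamma}b$ and $x=y$, and the diagonal edges with $a\sim_{\Gamma}b$ and $x\sim_{\Sigma}y$. Here the bipartiteness of $\Gamma$ is precisely what keeps the twist consistent, since every $\Gamma$-edge joins $U$ to $W$, so the two endpoints of a horizontal or diagonal edge always receive complementary labels. For the fibre edges one uses that $(1,\sigma)$ is a TF-morphism together with the symmetry of $\sim_{\Sigma}$; the horizontal edges are immediate, as both endpoints undergo the same second-coordinate map; and the diagonal edges follow from $\sigma\in\Aut(\Sigma)$, which yields $x^{\sigma}\sim_{\Sigma}y^{\sigma}$. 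Once adjacency preservation is confirmed, $(\kappa,\tau)$ is a nontrivial TF-morphism and Lemma \ref{TF} delivers the instability of $\Gamma\boxtimes\Sigma$.

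The main obstacle is the presence of the diagonal edges, which is exactly the feature distinguishing the strong product from the Cartesian product handled in Proposition \ref{car}. For those edges a mere nontrivial TF-morphism of $\Sigma$ would not be enough; it is essential that the $R$-thick hypothesis furnishes a single map $\sigma$ that is at once a nontrivial automorphism (controlling the diagonal edges) and the second component of a nontrivial TF-morphism $(1,\sigma)$ (controlling the fibre edges). Organizing the case analysis so that both roles of $\sigma$ are used, while keeping the $U$/$W$ bookkeeping correct across the three edge types, is where the care is required.
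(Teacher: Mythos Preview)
Your proposal is correct and follows essentially the same approach as the paper. The paper also transposes the twins in the second coordinate over one part of the bipartition for the first map and over the other part for the second map, obtaining the same pair $(\kappa,\tau)$ up to interchanging $U$ and $W$, and then checks the three edge types in the strong product exactly as you outline.
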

\begin{proof}
Without loss of generality, assume that $\Gamma$ is bipartite and $\Sigma$ is $R$-thick. Let $\{U,W\}$ be the bipartition of $\Gamma$ and $z_1$ and $z_2$ be two vertices of $\Sigma$ such that $x\sim_{\Sigma}z_1\Longleftrightarrow x\sim_{\Sigma}z_2$ for every $x\in V(\Sigma)$. Recall that graphs in this paper are assumed to be loopless. Thus we have $z_1\nsim_{\Sigma}z_2$. Define two permutations $\alpha$ and $\beta$ on $V(\Gamma)\times V(\Sigma)$ as follows:
\begin{equation*}
 (a,x)^{\alpha}=\left\{
                   \begin{array}{lll}
                     (a,z_2), & \hbox{if}~a\in U~\hbox{and}~x=z_1; \\
                     (a,z_1), & \hbox{if}~a\in U~\hbox{and}~x=z_2; \\
                     (a,x), & \hbox{otherwise}
                   \end{array}
                 \right.
\end{equation*}
and
\begin{equation*}
 (a,x)^{\beta}=\left\{
                   \begin{array}{lll}
                     (a,z_2), & \hbox{if}~a\in W~\hbox{and}~x=z_1; \\
                     (a,z_1), & \hbox{if}~a\in W~\hbox{and}~x=z_2; \\
                     (a,x), & \hbox{otherwise}.
                   \end{array}
                 \right.
\end{equation*}

Let $(a,x)$ and $(b,y)$ be two adjacent vertices of $\Gamma\boxtimes\Sigma$ where $a,b\in V(\Gamma)$ and $x,y\in V(\Sigma)$. Then we have
\begin{equation*}
  \left\{
    \begin{array}{ll}
     a=b~\mbox{and}~x\sim_{\Sigma}y,~\mbox{or} \\
     a\sim_{\Gamma}b~\mbox{and}~x\sim_{\Sigma}y,~\mbox{or}\\
     a\sim_{\Gamma}b~\mbox{and}~x=y.
    \end{array}
  \right.
\end{equation*}
By the definition of $\alpha$ and $\beta$, we can set $(a,x)^{\alpha}=(a,x')$ and $(b,y)^{\beta}=(b,y')$.

If $\{x,y\}\cap \{z_1,z_2\}=\emptyset$,
then $(a,x)^{\alpha}=(a,x)$ and $(b,y)^{\beta}=(b,y)$.
If $x=y\in\{z_1,z_2\}$, then $a\sim_{\Gamma}b$ and $x'=y'\in\{z_1,z_2\}$. In either case, we have $(a,x)^{\alpha}\sim_{\Gamma\boxtimes\Sigma}(b,y)^{\beta}$.

Now we assume $\{x,y\}\cap \{z_1,z_2\}\ne\emptyset$ and $x\ne y$. Then $y\sim_{\Sigma}x$. Since $z_1\nsim_{\Sigma}z_2$, we get $\{x,y\}\ne \{z_1,z_2\}$. Therefore either $x\in \{z_1,z_2\}$ and $y\notin \{z_1,z_2\}$ or $x\notin \{z_1,z_2\}$ and $y\in \{z_1,z_2\}$. It follows that either $x'\in \{z_1,z_2\}$ and $y'=y$ or $x'=x$ and $y'\in \{z_1,z_2\}$. Since $z_1$ and $z_2$ have the same neighbourhood, we have that $y'\sim_{\Sigma}x'$. It follows that $(a,x)^{\alpha}\sim_{\Gamma\boxtimes\Sigma}(b,y)^{\beta}$.

Note that $\alpha\neq \beta$ and we have proved that $(a,x)^{\alpha}\sim_{\Gamma\boxtimes\Sigma}(b,y)^{\beta}$ for any two adjacent vertices $(a,x)$ and $(b,y)$ of $\Gamma\boxtimes\Sigma$.
Therefore $(\alpha,\beta)$ is a nontrivial TF-morphism of $\Gamma\boxtimes\Sigma$. By Lemma \ref{TF}, $\Gamma\boxtimes\Sigma$ is unstable.
\end{proof}
\begin{pro}
\label{tfaas}
Let $\Gamma$ and $\Sigma$ be two graphs, one of which has a TFS-morphism. Then $\overline{\Gamma\boxtimes\Sigma}$ is unstable.
\end{pro}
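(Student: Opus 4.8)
The plan is to exhibit an explicit automorphism of $\big(\overline{\Gamma\boxtimes\Sigma}\big)\times K_2$ that does not lie in $\Aut\big(\overline{\Gamma\boxtimes\Sigma}\big)\times\mathbb{Z}_2$, so that instability follows directly from the definition, with no appeal to connectivity or non-bipartiteness (both of which may genuinely fail, e.g.\ when $\Gamma$ and $\Sigma$ are complete, in which case $\overline{\Gamma\boxtimes\Sigma}$ is edgeless). Since the strong product is commutative, I may assume without loss of generality that $\Gamma$ carries a TFS-morphism $(\alpha,\beta)$. By Lemma \ref{tfaa}, $(\alpha,\beta)$ is then a TF-morphism of $\overline{\Gamma}$ and $\alpha\beta^{-1}$ is a derangement; in particular $\alpha\neq\beta$ and $a^{\alpha}\sim_{\Gamma}a^{\beta}$ for every $a\in V(\Gamma)$.

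The first step is to rewrite the adjacency of $H:=\overline{\Gamma\boxtimes\Sigma}$. Writing the strong-product adjacency as $(a,x)\sim_{\Gamma\boxtimes\Sigma}(b,y)\Leftrightarrow(a,x)\neq(b,y)$, $a\in\Gamma[b]$ and $x\in\Sigma[y]$, and then complementing, a short check gives
\[
(a,x)\sim_{H}(b,y)\Longleftrightarrow a\sim_{\overline{\Gamma}}b~\text{or}~x\sim_{\overline{\Sigma}}y,
\]
where the side condition $(a,x)\neq(b,y)$ is automatic since each disjunct already forces $a\neq b$ or $x\neq y$ (so that $H$ is precisely the disjunction, or co-normal product, of $\overline{\Gamma}$ and $\overline{\Sigma}$).

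The second, and main, step is to define $\gamma$ on $V(H)\times\{0,1\}=V(\Gamma)\times V(\Sigma)\times\{0,1\}$ by
\[
\big((a,x),0\big)^{\gamma}=\big((a^{\alpha},x),0\big)\quad\text{and}\quad\big((a,x),1\big)^{\gamma}=\big((a^{\beta},x),1\big),
\]
i.e.\ $\gamma$ preserves each $K_2$-layer and acts on the first coordinate by $\alpha$ on layer $0$ and by $\beta$ on layer $1$. As $H\times K_2$ has edges only between the two layers, the only thing to verify is that $\gamma$ preserves cross-layer adjacency, namely
\[
\big(a\sim_{\overline{\Gamma}}b~\text{or}~x\sim_{\overline{\Sigma}}y\big)\Longleftrightarrow\big(a^{\alpha}\sim_{\overline{\Gamma}}b^{\beta}~\text{or}~x\sim_{\overline{\Sigma}}y\big).
\]
When $x\sim_{\overline{\Sigma}}y$ both sides hold, so I reduce to the case $x\nsim_{\overline{\Sigma}}y$ and must check $a\sim_{\overline{\Gamma}}b\Leftrightarrow a^{\alpha}\sim_{\overline{\Gamma}}b^{\beta}$. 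For distinct $a,b$ this is exactly Remark \ref{rtfs}(ii) applied to the TF-morphism $(\alpha,\beta)$ of $\overline{\Gamma}$. The remaining case $a=b$ is the crux of the argument: the left-hand side is false (no loops in $\overline{\Gamma}$), while the right-hand side $a^{\alpha}\sim_{\overline{\Gamma}}a^{\beta}$ is also false \emph{precisely because} $a^{\alpha}\sim_{\Gamma}a^{\beta}$ by the TFS-condition, so $a^{\alpha}$ and $a^{\beta}$ are distinct and adjacent in $\Gamma$, hence non-adjacent in $\overline{\Gamma}$. This diagonal case is exactly where being a TFS-morphism (and not merely a TF-morphism of $\overline{\Gamma}$) is indispensable, and I expect it to be the only delicate point in the verification.

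Finally, $\gamma$ is layer-preserving yet acts as $(\alpha,\mathbbm{1})$ on layer $0$ and as $(\beta,\mathbbm{1})$ on layer $1$ with $\alpha\neq\beta$, so it is not of the uniform form $(v,i)\mapsto(v^{\sigma},i)$; hence it is neither a layer-preserving nor (being layer-preserving) a layer-swapping element of $\Aut(H)\times\mathbb{Z}_2$. Therefore $\gamma\in\Aut(H\times K_2)\setminus\big(\Aut(H)\times\mathbb{Z}_2\big)$, which proves that $H=\overline{\Gamma\boxtimes\Sigma}$ is unstable.
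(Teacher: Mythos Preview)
Your proof is correct and, at its core, the same as the paper's: both produce the pair $((\alpha,1),(\beta,1))$ and show it is a nontrivial TF-morphism of $\overline{\Gamma\boxtimes\Sigma}$, which gives the instability. The only difference is packaging---the paper checks that this pair is a TFS-morphism of $\Gamma\boxtimes\Sigma$ and then invokes Lemma~\ref{tfaa}, whereas you first rewrite $\overline{\Gamma\boxtimes\Sigma}$ as the co-normal product of $\overline{\Gamma}$ and $\overline{\Sigma}$ and verify the TF-morphism property (and the resulting double-cover automorphism) directly.
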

\begin{proof}
Without loss of generality, assume that $\Gamma$ has a TFS-morphism $(\alpha,\beta)$. Then $a^{\alpha}\sim_{\Gamma}a^{\beta}$ for every $a\in V(\Gamma)$ and $b^{\alpha}\sim_{\Gamma}c^{\beta}$ or $b^{\alpha}=c^{\beta}$ for each pair of adjacent vertices $b,c\in V(\Gamma)$. This implies that $(a^{\alpha},x)\sim_{\Gamma\boxtimes\Sigma}(a^{\beta},x)$ for every $a\in V(\Gamma)$ and either $(b^{\alpha},x)\sim_{\Gamma\boxtimes\Sigma}(c^{\beta},y)$ or $(b^{\alpha},x)=(c^{\beta},y)$ for each pair of adjacent vertices $(b,x),(c,y)\in V(\Gamma\boxtimes\Sigma)$. Thus $((\alpha,1),(\beta,1))$ is a TFS-morphism of $\Gamma\boxtimes\Sigma$. By Lemma \ref{tfaa}, $((\alpha,1),(\beta,1))$ is a nontrivial TF-morphism of $\overline{\Gamma\boxtimes\Sigma}$. Therefore $\overline{\Gamma\boxtimes\Sigma}$ is unstable.
\end{proof}
\begin{pro}
\label{semir}
Let $\Gamma$ and $\Sigma$ be two connected graphs which has at least two vertices. Then $\Gamma\ltimes\Sigma$ is unstable if one of the following two holds:
\begin{enumerate}
  \item $\Sigma$ is unstable;
  \item $\Gamma$ has a TFS-morphism.
\end{enumerate}
\end{pro}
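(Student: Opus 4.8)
The plan is to handle the two sufficient conditions separately: condition (i) reduces immediately to Lemma \ref{product}, whereas condition (ii) requires explicitly constructed maps, treated in two subcases according to whether $\Sigma$ is bipartite.

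For (i), I would recognise the semi-strong product as an instance of the abstract product $\Gamma\ast\Sigma$ of Lemma \ref{product}. Taking $L$ to be the reflexive closure of $E(\Gamma)$, that is $L=\{(a,b)\mid a=b~\text{or}~a\sim_\Gamma b\}$, and $R=E(\Sigma)$, the defining adjacency of $\Gamma\ltimes\Sigma$ is precisely ``$a\sim_L b$ and $x\sim_R y$'', so $\Gamma\ltimes\Sigma=\Gamma\ast\Sigma$. Both $L$ and $R$ are symmetric and $\Gamma,\Sigma$ are connected, so Lemma \ref{product} applies; as $\Sigma$ is unstable with $E(\Sigma)=R$, it yields the instability of $\Gamma\ltimes\Sigma$ at once.

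For (ii), let $(\alpha,\beta)$ be a TFS-morphism of $\Gamma$, and recall that $\alpha\beta^{-1}$ is a derangement, so $\alpha\ne\beta$. When $\Sigma$ is non-bipartite I would check that $\big((\alpha,1),(\beta,1)\big)$ is a nontrivial TF-morphism of $\Gamma\ltimes\Sigma$: for an edge $(a,x)\sim_{\Gamma\ltimes\Sigma}(b,y)$ we have $a\in\Gamma[b]$ and $x\sim_\Sigma y$, and a short case split on $a=b$ versus $a\sim_\Gamma b$ uses $a^\alpha\sim_\Gamma a^\beta$ together with the implication ``$u\sim_\Gamma v\Rightarrow u^\alpha=v^\beta$ or $u^\alpha\sim_\Gamma v^\beta$'' to force $a^\alpha\in\Gamma[b^\beta]$, whence $(a^\alpha,x)\sim_{\Gamma\ltimes\Sigma}(b^\beta,y)$. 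By Lemma \ref{cnbthss}, $\Gamma\ltimes\Sigma$ is then connected and non-bipartite, so Lemma \ref{TF} gives instability.

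The hard part is the case where $\Sigma$ is bipartite, for then $\Gamma\ltimes\Sigma$ is itself bipartite by Lemma \ref{cnbthss}, Lemma \ref{TF} is unavailable, and the TF-morphism above no longer suffices; I must instead exhibit a genuine nontrivial automorphism. Fixing a bipartition $\{P,Q\}$ of $\Sigma$ (both parts nonempty since $\Sigma$ is connected with at least two vertices), I would set $(a,x)^\phi=(a^\alpha,x)$ for $x\in P$ and $(a,x)^\phi=(a^\beta,x)$ for $x\in Q$; this is a vertex bijection acting as $\alpha$ on $V(\Gamma)\times P$ and as $\beta$ on $V(\Gamma)\times Q$. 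The decisive step is adjacency preservation: the endpoints of any edge lie in different parts of $\Sigma$, so $\phi$ applies $\alpha$ to one first coordinate and $\beta$ to the other, and the TFS-morphism properties again yield the required membership $a^\alpha\in\Gamma[b^\beta]$. Since the graph is finite, an edge-preserving vertex bijection is automatically an automorphism, and $\phi\ne 1$ because $\phi=1$ would force $\alpha=\beta=1$, contradicting $\alpha\ne\beta$. As a bipartite graph admitting a nontrivial automorphism is unstable, this finishes the proof.
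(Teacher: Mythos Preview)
Your proof is correct and, for part (i) and the non-bipartite subcase of part (ii), coincides with the paper's argument. The paper, however, does not split part (ii) into bipartite and non-bipartite subcases: it simply exhibits the nontrivial TF-morphism $((\alpha,1),(\beta,1))$ of $\Gamma\ltimes\Sigma$ and invokes Lemma~\ref{TF} directly, without checking non-bipartiteness. Strictly speaking Lemma~\ref{TF} is stated only for connected non-bipartite graphs, so your separate treatment of the bipartite case is more scrupulous; but the forward implication ``nontrivial TF-morphism $\Rightarrow$ unstable'' in fact holds for \emph{all} graphs (the map $(u,0)\mapsto(u^\alpha,0)$, $(u,1)\mapsto(u^\beta,1)$ is always an automorphism of $\Lambda\times K_2$ lying outside $\Aut(\Lambda)\times\mathbb{Z}_2$ whenever $\alpha\ne\beta$), which is why the paper's shortcut is harmless. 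Your explicit construction of the automorphism $\phi$ in the bipartite case is a pleasant alternative that stays strictly within the lemmas as stated.
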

\begin{proof}
(i) Note that $\Gamma\ltimes\Sigma$ is the graph defined in Lemma \ref{product} with $R=E(\Sigma)$ and $L=E(\Gamma)\cup \{(a,a)\mid a\in V(\Gamma)\}$. Since $\Sigma$ is unstable, Lemma \ref{product} implies that $\Gamma\ltimes\Sigma$ is unstable.

(ii) Let $(\alpha,\beta)$ be a TFS-morphism of $\Gamma$. We will confirm the instability of $\Gamma\ltimes\Sigma$ by showing that $((\alpha,1),(\beta,1))$ is a nontrivial TF-morphism of $\Gamma\ltimes\Sigma$. By the definition of TFS-morphism, we have that $a^\alpha\sim_{\Gamma}a^\beta$ for every $a\in V(\Gamma)$ and  $b^\alpha\sim_{\Gamma}c^\beta$ or $b^\alpha=c^\beta$ for each pair of adjacent vertices $b$ and $c$ of $\Gamma$.
Therefore $(\alpha,1)\neq (\beta,1)$ and
 \begin{align*}
  (a,x)\sim_{\Gamma\ltimes\Sigma}(a,y)\Longleftrightarrow&
  x\sim_{\Sigma}y \\ \Longleftrightarrow&
  a^\alpha\sim_{\Gamma}a^\beta~\mbox{and}~x\sim_{\Sigma}y \\
  \Longleftrightarrow&
  (a^\alpha,x)\sim_{\Gamma\ltimes\Sigma}(a^\beta,y) \\
  \Longleftrightarrow& (a,x)^{(\alpha,1)}\sim_{\Gamma\ltimes\Sigma} (a,y)^{(\beta,1)}
\end{align*}
for $(a,x),(a,y)\in V(\Gamma)\times V(\Sigma)$.
Now let $(b,x)$ and $(c,y)$ be a pair of vertices of $\Gamma\ltimes\Sigma$ with
$b\ne c$. Since $b^\alpha\sim_{\Sigma}c^\beta$ or $b^\alpha=c^\beta$ whenever $b\sim_{\Gamma}c$, we have
 \begin{align*}
  (b,x)\sim_{\Gamma\ltimes\Sigma}(c,y)\Longleftrightarrow&
  b\sim_{\Gamma}c~\mbox{and}~x\sim_{\Sigma}y \\
  \Longleftrightarrow&
  b^\alpha\sim_{\Gamma}c^\beta ~\mbox{or}~ b^\alpha=c^\beta,~\mbox{and}~x\sim_{\Sigma}y\\
  \Longleftrightarrow&
  (b^\alpha,x)\sim_{\Gamma\ltimes\Sigma}(c^\beta,y) \\
  \Longleftrightarrow& (b,x)^{(\alpha,1)}\sim_{\Gamma\ltimes\Sigma} (c,y)^{(\beta,1)}.
\end{align*}
Therefore $((\alpha,1),(\beta,1))$ is a nontrivial TF-morphism of $\Gamma\ltimes\Sigma$. By Lemma \ref{TF}, $\Gamma\ltimes\Sigma$ is unstable.
\end{proof}

\begin{pro}
\label{rlexi}
Let $\Gamma$ and $\Sigma$ be two graphs. If $\Sigma$ has a nontrivial TF-morphism, then $\Gamma[\Sigma]$ is unstable.
\end{pro}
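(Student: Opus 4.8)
The plan is to follow the template established in Lemma \ref{product} and Propositions \ref{car}--\ref{semir}: from the nontrivial TF-morphism of $\Sigma$ I would build a nontrivial TF-morphism of $\Gamma[\Sigma]$ and then appeal to Lemma \ref{TF}. Concretely, let $(\alpha,\beta)$ be a nontrivial TF-morphism of $\Sigma$, so that $\alpha\neq\beta$ and $x\sim_{\Sigma}y$ implies $x^{\alpha}\sim_{\Sigma}y^{\beta}$. I would lift this pair to the two permutations $(1,\alpha)$ and $(1,\beta)$ on $V(\Gamma)\times V(\Sigma)$ that fix the first coordinate and act by $\alpha$, respectively $\beta$, on the second, and claim that $\big((1,\alpha),(1,\beta)\big)$ is a nontrivial TF-morphism of $\Gamma[\Sigma]$.

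The verification is organized around the two clauses of the lexicographic adjacency $(a,x)\sim_{\Gamma[\Sigma]}(b,y)\Longleftrightarrow a\sim_{\Gamma}b$, or $a=b$ and $x\sim_{\Sigma}y$. First I would treat an edge with $a\sim_{\Gamma}b$: since adjacency in this case is insensitive to the second coordinate, the images $(a,x^{\alpha})$ and $(b,y^{\beta})$ remain adjacent, so such edges are automatically preserved. Then I would treat a fiber edge with $a=b$ and $x\sim_{\Sigma}y$: here the images are $(a,x^{\alpha})$ and $(a,y^{\beta})$, which are adjacent precisely when $x^{\alpha}\sim_{\Sigma}y^{\beta}$, and this is guaranteed by the TF-morphism property of $(\alpha,\beta)$. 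Combining the two cases shows that $(a,x)\sim_{\Gamma[\Sigma]}(b,y)$ implies $(a,x)^{(1,\alpha)}\sim_{\Gamma[\Sigma]}(b,y)^{(1,\beta)}$, which is exactly the defining condition for a TF-morphism.

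For nontriviality, since $\alpha\neq\beta$ there is a vertex $x\in V(\Sigma)$ with $x^{\alpha}\neq x^{\beta}$, and then $(a,x)^{(1,\alpha)}=(a,x^{\alpha})\neq(a,x^{\beta})=(a,x)^{(1,\beta)}$ for any $a\in V(\Gamma)$, so $(1,\alpha)\neq(1,\beta)$. Lemma \ref{TF} then yields the instability of $\Gamma[\Sigma]$, exactly as in the preceding propositions. I do not expect a genuine obstacle here: this is the most transparent of the five products, because the horizontal $\Gamma$-edges of $\Gamma[\Sigma]$ impose no constraint on the second coordinate at all, so the only edges that could detect the twist between $\alpha$ and $\beta$ are the fiber edges, on which the condition collapses exactly to the TF-morphism hypothesis on $\Sigma$. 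The one point worth stating carefully is that we invoke only the forward implication of Lemma \ref{TF} (existence of a nontrivial TF-morphism forces instability), which is precisely what the earlier proofs in this section use as well.
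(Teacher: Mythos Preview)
Your proof is correct and mirrors the paper's argument exactly: both lift a nontrivial TF-morphism $(\alpha,\beta)$ of $\Sigma$ to the pair $\big((1,\alpha),(1,\beta)\big)$ on $V(\Gamma)\times V(\Sigma)$, verify it is a nontrivial TF-morphism of $\Gamma[\Sigma]$ via the two clauses of lexicographic adjacency, and conclude by Lemma~\ref{TF}. The only cosmetic difference is that the paper writes the verification as a single chain of biconditionals rather than splitting into cases.
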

\begin{proof}
Let $(\alpha,\beta)$ be a nontrivial TF-morphism of $\Sigma$. Then   $\alpha\ne\beta$. Consider the two permutations $(1,\alpha)$ and  $(1,\beta)$ on $V(\Gamma)\times V(\Sigma)$. Then $(1,\alpha)\ne(1,\beta)$ and
\begin{align*}
  (a,x)\sim_{\Gamma[\Sigma]}(b,y)\Longleftrightarrow&
  a\sim_{\Gamma}b,~\mbox{or}~a=b~\mbox{and}~x\sim_{\Sigma}y \\
  \Longleftrightarrow&
  a\sim_{\Gamma}b,~\mbox{or}~a=b~\mbox{and}~x^{\alpha}\sim_{\Sigma}y^{\beta} \\
  \Longleftrightarrow& (a,x^{\alpha})\sim_{\Gamma[\Sigma]} (b,y^{\beta})\\
  \Longleftrightarrow& (a,x)^{(1,\alpha)}\sim_{\Gamma[\Sigma]} (b,y)^{(1,\beta)}.
\end{align*}
Therefore $((1,\alpha),(1,\beta))$ is a nontrivial TF-morphism of $\Gamma[\Sigma]$. By Lemma \ref{TF}, $\Gamma[\Sigma]$ is unstable.
\end{proof}
\section{Constructions}
\label{sec6}
In this section, we use graph products to construct  nontrivially unstable circulant graphs.
Let $S$ be a subset of a group $G$ that does not contain the identity element and is closed to the inverse operation. The \emph{Cayley graph} $\Cay(G, S)$ of $G$ with the \emph{connection set} $S$ is defined to be the
graph with vertex set $G$ such that two elements $x, y$ are
adjacent if and only if $yx^{-1} \in S$ (or $y-x \in S$ if $G$ is an additive group).
In particular, we call $\Cay(G, S)$ a \emph{circulant graph} if $G$ is cyclic.

\medskip
We begin by stating two propositions, which follow directly from Theorems \ref{dbundle} (ii) and \ref{others} (iv), respectively.
\begin{pro}
\label{dihedral}
Let $\Gamma$ and $\Sigma$ be two graphs. If $\Sigma$ has an automorphism $\rho$ and an involutory automorphism $\delta$ such that $\delta^{-1}\rho\delta\ne\rho$, then $\Gamma\times^{p}\Sigma$ is unstable where $p(a,b)=\delta$ for all $(a,b)\in V(\Gamma)\times V(\Gamma)$.
\end{pro}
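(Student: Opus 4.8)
The plan is to derive this directly from Theorem~\ref{dbundle}~(ii) applied to the constant map $p(a,b)=\delta$. Before invoking that theorem I must confirm that $p$ is admissible, i.e.\ that $p(a,b)=p(b,a)^{-1}$. Since $\delta$ is involutory, $\delta^{-1}=\delta$, so $p(b,a)^{-1}=\delta^{-1}=\delta=p(a,b)$ for every $(a,b)\in V(\Gamma)\times V(\Gamma)$, and the standing hypothesis on $p$ in Theorem~\ref{dbundle} is satisfied. The bundle $\Gamma\times^{p}\Sigma$ is therefore well defined.

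Next I would supply the data required by Theorem~\ref{dbundle}~(ii). Since $\rho\in\Aut(\Sigma)$, Remark~\ref{rtfs}~(i) tells us that $(\alpha,\beta):=(\rho,\rho)$ is a TF-morphism of $\Sigma$. I then set $\theta:=\delta\rho\delta$, which is a permutation of $V(\Sigma)$. Because $p$ is constant with $p(a,b)^{-1}=\delta$, the compatibility requirement $\theta p(a,b)^{-1}=p(a,b)^{-1}\beta$ collapses to the single identity $\theta\delta=\delta\rho$; this holds since $\theta\delta=\delta\rho\delta\delta=\delta\rho$ using $\delta^{2}=1$. Thus the data $(\alpha,\beta)$ and $\theta$ meet the algebraic conditions of the theorem.

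Finally I would verify the nontriviality clause $\alpha\ne\theta$ of Theorem~\ref{dbundle}~(ii). Here $\alpha=\rho$ and $\theta=\delta\rho\delta=\delta^{-1}\rho\delta$, so $\alpha\ne\theta$ is precisely the hypothesis $\delta^{-1}\rho\delta\ne\rho$. With every condition of Theorem~\ref{dbundle}~(ii) checked, that theorem yields the instability of $\Gamma\times^{p}\Sigma$. The argument is essentially bookkeeping and presents no serious obstacle; the one point needing care is to apply the involution identity $\delta^{-1}=\delta$ consistently, so that the conjugation appearing in the hypothesis lines up exactly with the inequality $\theta\ne\alpha$ demanded by the theorem.
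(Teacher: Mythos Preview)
Your proof is correct and essentially identical to the paper's: both set $\alpha=\beta=\rho$, $\theta=\delta^{-1}\rho\delta$ and verify the hypotheses of Theorem~\ref{dbundle}~(ii) using $\delta^{-1}=\delta$. The only difference is that you spell out the admissibility check $p(a,b)=p(b,a)^{-1}$ and the identity $\theta\delta=\delta\rho$ a bit more explicitly.
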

\begin{proof}
Let $\alpha=\beta=\rho$ and $\theta=\delta^{-1}\rho\delta$. Then $(\alpha,\beta)$ is a TF-morphism of $\Sigma$, $\alpha\ne\theta$ and $\theta p(a,b)^{-1}=\delta^{-1}\rho=p(a,b)^{-1}\beta$ for every $(a,b)\in V(\Gamma)\times V(\Gamma)$. By Theorem \ref{dbundle} (ii), $\Gamma\times^{p}\Sigma$ is unstable.
\end{proof}

Let $K_n$ denote the complete graph of order $n$.
Every pair $(\alpha, \beta)$ of permutations on $V(K_n)$, for which $\alpha\beta^{-1}$ is a derangement, clearly constitutes a TFS-morphism of $K_n$. However, Lemma \ref{cnbthss} (iii) implies that $K_n\ltimes\Sigma$ is $R$-thick.
Thus, $K_n$ cannot serve as $\Gamma$ in constructing a nontrivially unstable graph $\Gamma\ltimes\Sigma$.
On the other hand, removing a perfect matching from $K_{2n}$
produces a graph that is a suitable candidate for this purpose.
\begin{pro}
\label{k2n}
Let $n\geq 2$ and $\Gamma$ be a graph obtained from $K_{2n}$ by removing a perfect matching. Let $\Sigma$ be a connected, non-bipartite and $R$-thin graph. Then $\Gamma\ltimes\Sigma$ is nontrivially unstable.
\end{pro}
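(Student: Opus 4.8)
The plan is to apply Proposition~\ref{semir}~(ii), which tells us that $\Gamma\ltimes\Sigma$ is unstable whenever $\Gamma$ and $\Sigma$ are connected nontrivial graphs and $\Gamma$ has a TFS-morphism. So the argument splits naturally into two parts: first verifying that the hypotheses let us invoke Proposition~\ref{semir}~(ii) to get instability, and second upgrading \emph{unstable} to \emph{nontrivially unstable} by checking connectedness, non-bipartiteness, and $R$-thinness of $\Gamma\ltimes\Sigma$.

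For the instability itself, I would first exhibit a TFS-morphism of $\Gamma$. Let the removed perfect matching pair up vertices, and write $\mu$ for the fixed-point-free involution of $V(K_{2n})$ sending each vertex to its matching partner; by construction $\Gamma(v)=V(\Gamma)\setminus\{v,v^\mu\}$ for every $v$. The natural candidate is the pair $(\alpha,\beta)=(\mathrm{id},\mu)$. Since $\mu$ is an involution with no fixed points, $\alpha\beta^{-1}=\mu$ is a derangement, so by Lemma~\ref{tfaa} it suffices to check that $(\mathrm{id},\mu)$ is a TF-morphism of $\overline{\Gamma}$. But $\overline{\Gamma}$ is exactly the perfect matching (a disjoint union of $n$ edges $\{v,v^\mu\}$), and $u\sim_{\overline\Gamma}v$ means $v=u^\mu$; then $u^{\mathrm{id}}=u$ and $v^\mu=u$, so $u^{\mathrm{id}}\sim_{\overline\Gamma}v^\mu$ would require $u\sim_{\overline\Gamma}u$, which fails. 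Hence I would instead directly verify the TFS-morphism conditions: $v^{\mathrm{id}}=v$ and $v^\mu$ are adjacent in $\Gamma$ precisely because $n\ge 2$ forces $\Gamma$ to actually contain edges, so matched partners are nonadjacent only in $\overline\Gamma$ --- wait, the condition needs $v\sim_\Gamma v^\mu$, which is \emph{false}. So the correct choice must make $v^\alpha\sim_\Gamma v^\beta$ hold. I would therefore take $(\alpha,\beta)=(\mathrm{id},\mu)$ only after confirming adjacency, or more safely pick $\alpha,\beta$ so that $\alpha\beta^{-1}$ is a derangement sending each $v$ to a genuine $\Gamma$-neighbour; since every non-partner is a neighbour, any fixed-point-free permutation $\delta$ avoiding the map $v\mapsto v^\mu$ works, and then $(\mathrm{id},\delta^{-1})$ satisfies $v\sim_\Gamma v^{\delta^{-1}}$ and the adjacency-preservation clause by the near-completeness of $\Gamma$.

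For the ``nontrivial'' upgrade, I would invoke the structural lemmas from Section~\ref{sec3}. By Lemma~\ref{cnbthss}~(i), $\Gamma\ltimes\Sigma$ is connected since both $\Gamma$ (complete minus a matching, clearly connected for $n\ge 2$) and $\Sigma$ are connected. By Lemma~\ref{cnbthss}~(ii), $\Gamma\ltimes\Sigma$ is non-bipartite because $\Sigma$ is non-bipartite. The only delicate point is $R$-thinness: by Lemma~\ref{cnbthss}~(iii), I must check that $\Sigma$ is $R$-thin (given) \emph{and} that $\Gamma[a]\ne\Gamma[b]$ for all distinct $a,b$. Here $\Gamma[v]=V(\Gamma)\setminus\{v^\mu\}$, so $\Gamma[a]=\Gamma[b]$ would force $a^\mu=b^\mu$, i.e.\ $a=b$; hence closed neighbourhoods are distinct and $\Gamma\ltimes\Sigma$ is $R$-thin. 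Combining instability with these three properties yields nontrivial instability.

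The main obstacle I anticipate is getting the TFS-morphism of $\Gamma$ exactly right: the defining conditions require $v^\alpha\sim_\Gamma v^\beta$ for \emph{every} vertex and a controlled behaviour on adjacent pairs, and the tempting choice $(\mathrm{id},\mu)$ fails the diagonal adjacency condition precisely because matched partners are the unique non-neighbours. The resolution is to exploit that $\Gamma$ is ``almost complete'': choosing the second permutation to be any fixed-point-free map that never sends a vertex to its matching partner guarantees $v^\alpha\sim_\Gamma v^\beta$, and the adjacency clause for adjacent pairs holds automatically since in such a dense graph any two images are either equal or adjacent unless they happen to be matched partners --- a case I would rule out by the choice of permutation. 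Pinning down one clean explicit pair and verifying both TFS clauses is where the real care is needed; everything else is a direct application of the Section~\ref{sec3} lemmas and Proposition~\ref{semir}.
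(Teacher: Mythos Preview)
Your overall strategy coincides with the paper's: exhibit a TFS-morphism of $\Gamma$, apply Proposition~\ref{semir}(ii), and then verify connectedness, non-bipartiteness and $R$-thinness via Lemma~\ref{cnbthss}. Your treatment of the ``nontrivial'' part is fine; in particular the observation $\Gamma[v]=V(\Gamma)\setminus\{v^\mu\}$ cleanly settles the closed-neighbourhood condition.

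The gap is in the TFS-morphism. Your candidate $(\alpha,\beta)=(\mathrm{id},\delta^{-1})$, with $\delta$ fixed-point-free and avoiding $\mu$, never works. By Lemma~\ref{tfaa}, a TFS-morphism of $\Gamma$ is exactly a TF-morphism of $\overline{\Gamma}$ with $\alpha\beta^{-1}$ a derangement; but $\overline{\Gamma}$ is the matching, and $u\sim_{\overline\Gamma}v\Leftrightarrow v=u^\mu$, so $(\alpha,\beta)$ is a TF-morphism of $\overline\Gamma$ iff $\beta=\mu\alpha\mu$. Taking $\alpha=\mathrm{id}$ forces $\beta=\mathrm{id}$, which is not a derangement. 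Concretely: for any vertex $u$ set $v=(u^\mu)^\delta$; your hypotheses on $\delta$ give $v\notin\{u,u^\mu\}$, so $u\sim_\Gamma v$, yet $v^{\delta^{-1}}=u^\mu$, which is neither equal nor adjacent to $u=u^{\mathrm{id}}$. Thus the adjacency clause fails for every vertex, not just in a corner case you could ``rule out by the choice of permutation''. The paper instead labels the vertices $u_1,\dots,u_n,v_1,\dots,v_n$ with the matching $\{u_i,v_i\}$ and takes $\alpha=(u_1\,u_2\cdots u_n)$, $\beta=(v_1\,v_2\cdots v_n)$; here $\beta=\mu\alpha\mu$ and $\alpha\beta^{-1}$ is a derangement for $n\ge2$, and both TFS clauses are easily checked directly (the key point being that $i\ne j$ implies $i+1\ne j+1$).
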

\begin{proof}
Let $K_{2n}$ be the complete graph on the set $\{u_1,u_2,\ldots,u_n,v_1,v_2,\ldots,v_n\}$ and $\Gamma$ be the graph obtained from $K_{2n}$ by removing the edges jointing $u_i$ and $v_i$, $i=1,2,\ldots,n$.
Let $\alpha=(u_1\,u_2\,\ldots\,u_n)$ and $\beta=(v_1\,v_2\,\ldots\,v_n)$ be two permutations on $V(\Gamma)$. It is a simple matter to check that $(\alpha,\beta)$ is a TFS-morphism of $\Gamma$. By Theorem \ref{others} (iv), $\Gamma\ltimes\Sigma$ is stable. It is obvious that $\Gamma$ is connected and non-bipartite. Furthermore, it is straightforward to check that $\Gamma[a]\ne\Gamma[b]$ for each pair of distinct  vertices $a$ and $b$ of $\Gamma$. Since $\Sigma$ is connected, non-bipartite and $R$-thin, by Lemma \ref{cnbthss} we have that $\Gamma\ltimes\Sigma$ is connected, non-bipartite and $R$-thin. Therefore $\Gamma\ltimes\Sigma$ is nontrivially unstable.
\end{proof}
Now we use graph products to construct nontrivially unstable circulant graphs.
\begin{exam}
\label{cpc1}
Let $\Gamma=\Cay(\mathbb{Z}_{30},\{\pm1,\pm4\})$ and $\Sigma=\Cay(\mathbb{Z}_n,\{1,-1\})$ where $n\geq3$. For each $i\in \mathbb{Z}_n$, define a mapping $t_i:\mathbb{Z}_n\rightarrow\mathbb{Z}_n,~x\mapsto x+i,\forall x\in \mathbb{Z}_n$.  Let $\Lambda=\Gamma\times^{p}\Sigma$ where $p$ is the mapping from $V(\Gamma)\times V(\Sigma)$ to $\Aut(\Sigma)$ defined as follows:
\begin{equation*}
p(a,b)=\left\{
          \begin{array}{ll}
            t_1, & \hbox{if}~b-a=1~\mbox{or}~-4;\\
            t_{-1}, & \hbox{if}~b-a=-1~\mbox{or}~4;\\
            t_0, & \hbox{otherwise.}
          \end{array}
        \right.
\end{equation*}
Then $\Lambda$ is an unstable Cayley graph on $\mathbb{Z}_{30}\times\mathbb{Z}_n$. In particular, if $(30,n)=1$, then $\Lambda$ is a nontrivially unstable circulant graph.
\end{exam}
\begin{proof}
Clearly, $t_0$ is the identity element of $\Aut(\Sigma)$ and $t_1,t_{-1}$ are two mutually inverse elements of $\Aut(\Sigma)$. Therefore $p$ is well defined. Define two permutations $\alpha$ and $\beta$ on $\mathbb{Z}_{30}$ by the rule: $a^\alpha=11a$ and $a^\beta=11a+15$ for all $a\in \mathbb{Z}_{30}$. Then $(\alpha,\beta)$ is a nontrivial TF-morphism of $\Gamma$ and $p(a^{\alpha},b^{\beta})=p(11a,11b+15)=p(a,b)$. By Theorem \ref{dbundle} (i), $\Lambda$ is unstable. It is a simple matter to verify that
\begin{equation*}
\Lambda=\Cay(\mathbb{Z}_{30}\times\mathbb{Z}_n,
\{\pm(1,0),\pm(1,2),\pm(4,0),\pm(4,-2)\})
\end{equation*}
Moreover, if $n$ is odd, then $\Lambda$ is connected, $R$-thin, and non-bipartite.

If $(30,n)=1$, then $\mathbb{Z}_{30}\times\mathbb{Z}_n\cong \mathbb{Z}_{30n}$ and $\Lambda$ is isomorphic to
\begin{equation*}
\Cay(\mathbb{Z}_{30n},
\{\pm n,\pm(n+60),\pm 4n,\pm(4n-60)\}),
\end{equation*}
which is a nontrivially unstable circulant graph.
\end{proof}
\begin{exam}
\label{cpc2}
Let $\Gamma=\Cay(\mathbb{Z}_{n},\{\pm1\})$ and $\Sigma=\Cay(\mathbb{Z}_{2m},\mathbb{Z}_{2m}\setminus\{1\})$ with $\gcd(n,2m)=1$. Let $\Lambda=\Gamma\times^{p}\Sigma$ where $p$ is the mapping from $V(\Gamma)\times V(\Gamma)$ to $\Aut(\Sigma)$ such that $x^{p(a,b)}=x+m$
for every $x\in \mathbb{Z}_{2m}$ and $(a,b)\in V(\Gamma)\times V(\Sigma)$. Then $\Lambda$ is a nontrivially unstable circulant graph.
\end{exam}
\begin{proof}
Since  $\Sigma=\Cay(\mathbb{Z}_{2m},\mathbb{Z}_{2m}\setminus\{1\})\cong K_{2m}$, we have $\Aut(\Sigma)=\Sym(\mathbb{Z}_{2m})$. Let $\rho=(1,2,\cdots,2m-1),\,\delta=(0,m)(1,1+m)\cdots(m-1,2m-1)\in \Sym(\mathbb{Z}_{2m})$. Then $\delta$ is an involution, $\delta^{-1}\rho\delta\neq \rho$ and $p(a,b)=\delta$ for all $(a,b)\in V(\Gamma)\times V(\Gamma)$. By Proposition \ref{dihedral}, $\Lambda$ is unstable.
It is a simple matter to verify that $\Lambda$ is connected, $R$-thin, nonbipartite and a Cayley graph on $\mathbb{Z}_{n}\times\mathbb{Z}_{2m}$ $(\cong\mathbb{Z}_{2mn})$. Therefore $\Lambda$ is a nontrivially unstable circulant graph.
\end{proof}
\begin{exam}
\label{cpc3}
Let $\Gamma=\Cay(\mathbb{Z}_{n},\{\pm1\})$ and $\Sigma=\Cay(\mathbb{Z}_{12},\{\pm1,\pm2,\pm7\})$ where $n$ is a positive integer coprime to $12$. Let $\Lambda=\Gamma\times^{p}\Sigma$ where $p$ is the mapping from $V(\Gamma)\times V(\Gamma)$ to $\Aut(\Sigma)$ such that $x^{p(a,b)}=x+6$
for every $x\in \mathbb{Z}_{12}$ and $(a,b)\in V(\Gamma)\times V(\Sigma)$. Then $\Lambda$ is a nontrivially unstable circulant graph.
\end{exam}
\begin{proof}
Note that  $\Sigma\cong \Cay(\mathbb{Z}_{3},\{\pm1\}) \times\Cay(\mathbb{Z}_{4},\{\pm1,2\})$ and the permutation
\begin{equation*}
\rho:=(1,7,10)(2,5,11)(3,6,9)
\end{equation*}
is an automorphism of $\Sigma$.  Let $\delta=(0,6)(1,7)(2,8)(3,9)(4,10)(5,11)$. Then $\delta$ is an involution, $\delta^{-1}\rho\delta\neq \rho$ and $p(a,b)=\delta$ for all $(a,b)\in V(\Gamma)\times V(\Gamma)$. By Proposition \ref{dihedral}, $\Lambda$ is unstable.
It is a simple matter to verify that
$\Lambda$ is  connected, $R$-thin, nonbipartite and isomorphic to the following Cayley graph
\begin{equation*}
 \Cay(\mathbb{Z}_{12n}, \{\pm(n+12),\pm(4n+12),\pm(5n+12),
 \pm(7n+12),\pm(8n+12),\pm(11n+12),\}).
\end{equation*}
Therefore $\Lambda$ is a nontrivially unstable circulant graph.
\end{proof}
\begin{exam}
\label{strpex}
Let $\Gamma=\Cay(\mathbb{Z}_{2n},\{\pm1\})$ and $\Sigma=\Cay(\mathbb{Z}_{9},\{\pm1,\pm4,\pm7\})$ where $3\nmid n$.
Then both $\Gamma\Box\Sigma$ and $\Gamma\boxtimes\Sigma$ are nontrivially unstable circulant graphs.
\end{exam}
\begin{proof}
Clearly, $\Gamma$ is bipartite. Note that $\Sigma$ is $R$-thick as $0$ and $4$ have the same neighbours. Of course, $\Sigma$ has a nontrivial TF-morphism. By Theorem \ref{others} (i) and (ii), $\Gamma\Box\Sigma$ and $\Gamma\boxtimes\Sigma$ are both unstable. Using Lemma \ref{cnthcp} and \ref{cnthsp}, it is straightforward to check that $\Gamma\Box\Sigma$ and $\Gamma\boxtimes\Sigma$ are both connected, non-bipartite and $R$-thin. Note that $\Gamma\Box\Sigma$ and $\Gamma\boxtimes\Sigma$ are isomorphic to
\begin{equation*}
\Cay(\mathbb{Z}_{18n},\{\pm9,\pm2n,\pm8n,\pm14n\}).
\end{equation*}
and
\begin{equation*}
\Cay(\mathbb{Z}_{18n},\{\pm9,\pm(2n\pm9),
\pm(8n\pm9),\pm(14n\pm9),
\pm2n,\pm8n,\pm14n\})
\end{equation*}
respectively.
Therefore both $\Gamma\Box\Sigma$ and $\Gamma\boxtimes\Sigma$ are nontrivially unstable circulant graphs.
\end{proof}
\begin{exam}
\label{strex}
Let $\Gamma=\Cay(\mathbb{Z}_{10},\{\pm3,\pm4,5\})$ and $\Sigma=\Cay(\mathbb{Z}_{n},\{1,-1\})$ where $n$ is coprime to $10$.
Then $\overline{\Gamma\boxtimes\Sigma}$ is a nontrivially unstable circulant graph.
\end{exam}
\begin{proof}
Note that $\overline{\Gamma}=\Cay(\mathbb{Z}_{10},\{\pm1,\pm2\})$ which has a TF-morphism $(\alpha,\beta)$ with $x^{\alpha}=3x$ and $x^{\beta}=3x+5$ for every $x\in \mathbb{Z}_{10}$. Since $x^{\alpha\beta^{-1}}=7(3x)+5=x+5$, we have that $\alpha\beta^{-1}$ is a derangement. By Theorem \ref{others} (iii), $\overline{\Gamma\boxtimes\Sigma}$ is unstable. It is a simple matter to verify that $\overline{\Gamma\boxtimes\Sigma}$ is a connected, non-bipartite and $R$-thin Cayley graph on $\mathbb{Z}_{10n}$. Therefore $\overline{\Gamma\boxtimes\Sigma}$ is a nontrivially unstable circulant graph.
\end{proof}
\begin{exam}
\label{semiex1}
Let $\Gamma=\Cay(\mathbb{Z}_{n},\{1,-1\})$ and $\Sigma=\Cay(\mathbb{Z}_{10},\{\pm1,\pm2\})$ where $n$ is coprime to $10$.
Then $\Gamma\ltimes\Sigma$ is a nontrivially unstable circulant graph.
\end{exam}
\begin{proof}
Let $\alpha$ and $\beta$ be the two permutations on $V(\Sigma)$ defined by the rule $x^{\alpha}=3x$ and $x^{\beta}=3x+5$. It is straightforward to check that $(\alpha,\beta)$ is a TF-morphism of $\Sigma$. Therefore $\Sigma$ is unstable.  By Theorem \ref{others} (iv), $\Gamma\ltimes\Sigma$ is unstable. Obviously, $\Gamma$ is connected and $\Sigma$ is connected, non-bipartite and $R$-thin. By Lemma \ref{cnbthss}, $\Gamma\ltimes\Sigma$ is connected, non-bipartite and $R$-thin. Note that $\Gamma\ltimes\Sigma$ is isomorphic to
\begin{equation*}
\Cay(\mathbb{Z}_{10n},\{\pm(n+10),\pm(n-10),\pm(2n+10),\pm(2n-10),\pm n,\pm 2n\}).
\end{equation*}
Therefore $\Gamma\ltimes\Sigma$ is a nontrivially unstable circulant graph.
\end{proof}

\begin{exam}
\label{semiex2}
Let $\Gamma=\Cay(\mathbb{Z}_{10},\{\pm3,\pm4,5\})$ and $\Sigma=\Cay(\mathbb{Z}_{n},\{1,-1\})$ where $n$ is coprime to $10$. Then$\Gamma\ltimes\Sigma$ is a nontrivially unstable circulant graph.
\end{exam}
\begin{proof}
As shown in the proof of Example \ref{strex}, $\Gamma$ has a TFS-morphism. By Theorem \ref{others} (iv), $\Gamma\ltimes\Sigma$ is unstable. Obviously, $\Gamma\ltimes\Sigma$ is isomorphic to
\begin{equation*}
\Cay(\mathbb{Z}_{10n},\{\pm(3n+10),\pm(3n-10),\pm(4n+10),\pm(4n-10),\pm (5n+10),\pm 10\})
\end{equation*}
 which is a connected, non-bipartite and $R$-thin. Therefore $\Gamma\ltimes\Sigma$ is a nontrivially unstable circulant graph.
\end{proof}
\begin{exam}
\label{semiex3}
Let $\Lambda=\Gamma\ltimes\Sigma$ where $\Gamma=\Cay(\mathbb{Z}_{8},\{\pm1,\pm2,\pm3\})$ and $\Sigma=\Cay(\mathbb{Z}_{n},\{\pm1\})$ with $2\nmid n$. Then $\Lambda$ is a nontrivially unstable circulant graph.
\end{exam}
\begin{proof}
Note that $\Gamma$ is a graph obtained from $K_8$ by removing a perfect
matching. Since $\Sigma$ is an odd cycle which is connected and non-bipartite,  by Proposition \ref{k2n}, we have that $\Lambda$ is nontrivially unstable.  Obviously,
\begin{align*}
  \Lambda=& \Cay(\mathbb{Z}_{8}\times\mathbb{Z}_{n},
\{\pm(1,1),\pm(1,-1),\pm(2,1),\pm(2,-1),\pm(3,1),\pm(3,-1),\pm(0,1)\}) \\
  \cong& \Cay(\mathbb{Z}_{8n},S)
\end{align*}
where $S=\{\pm(n+8),\pm(n-8),\pm(2n+8),\pm(2n-8),\pm(3n+8),\pm(3n-8),\pm 8\}$.
Therefore $\Lambda$ is a nontrivially unstable circulant graph.
\end{proof}

\begin{exam}
\label{lexiex}
Let $\Gamma=\Cay(\mathbb{Z}_{n},\{\pm1\})$ and $\Sigma=\Cay(\mathbb{Z}_{2m},\{\pm1\})$ where $n$ is coprime to $2m$.
Then $\Gamma[\Sigma]$ is a nontrivially unstable circulant graph.
\end{exam}
\begin{proof}
Obviously, $\Gamma$ is connected and non-bipartite and $\Sigma$ is connected and $R$-thin. By Lemma \ref{lpthin}, $\Gamma[\Sigma]$ is connected, non-bipartite and $R$-thin. Note that $\Gamma[\Sigma]$ is isomorphic to $\Cay(\mathbb{Z}_{n}\times\mathbb{Z}_{2m},
\{\pm(1,x)\mid x\in \mathbb{Z}_{2m}\}\cup\{\pm(0,1)\})$ which is a circulant graph. Since $\Sigma$ is a cycle of even length, it has a nontrivial TF-morphism. By Theorem \ref{others} (v), $\Gamma[\Sigma]$ is a nontrivially unstable circulant graph.
\end{proof}
\section{On the stability of circulant graphs}
\label{sec7}

As mentioned in the introduction, Wilson \cite{W2008} proposed four sufficient conditions for the instability of circulant graphs. However, two of these conditions were later identified as flawed and corrected by Qin et al. \cite{QXZ2019} and Hujdurovi\'c et al. \cite{HMM2021}, respectively. The following lemma incorporates Wilson's two original valid conditions alongside the corrected versions proposed by Qin et al. and Hujdurovi\'c et al..
\begin{lem}
\label{wtype}
Let $X = \Cay(\mathbb{Z}_n, S)$ be a circulant graph of even order.
Let $S_e = S \cap2\mathbb{Z}_n$ and $S_o = S \setminus S_e$. If any of the following conditions is true, then $X$ is unstable.
\begin{enumerate}
  \item[$\mathrm{C.}1.\,$] There is a nonzero element $h$ of $2\mathbb{Z}_n$, such that $h + S_e = S_e$.
  \item[$\mathrm{C.}2'.$] $n$ is divisible by $4$, and there exists $h \in 1 + 2\mathbb{Z}_n$, such that
\begin{enumerate}
  \item[{\rm(a)}] $2h + S_o = S_o$, and
  \item[{\rm(b)}]for each $s \in S$ with $s\equiv0~\mbox{or}~-h\pmod4$, we have $s + h\in S$.
\end{enumerate}
  \item[$\mathrm{C.}3'.$] There is a subgroup $H$ of $\mathbb{Z}_n$, such that the set
\begin{equation*}
R = \{s\in S \mid s +H \not\subseteq S \},
\end{equation*}
is nonempty and has the property that if we let $d =\gcd(R\cup\{n\})$, then $n/d$ is even,
$r/d$ is odd for every $r \in R$, and either $H\not\subseteq d\mathbb{Z}_n$ or $H\subseteq 2d\mathbb{Z}_n$.
  \item[$\mathrm{C.}4.\,$] There exists $r \in \mathbb{Z}_{n}^{*}$, such that $(n/2)+rS=S$.
\end{enumerate}
\end{lem}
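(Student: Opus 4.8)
The plan is to reduce every clause to the production of a nontrivial TF-morphism and then invoke Lemma \ref{TF}. First I would dispose of the degenerate cases: a circulant graph that is disconnected is automatically unstable, and a circulant graph that is bipartite carries the nontrivial translation $x\mapsto x+1$, so it is unstable as a bipartite graph admitting a nontrivial automorphism (both facts are recorded in the introduction). Hence I may assume $X$ is connected and non-bipartite, and by Lemma \ref{TF} it then suffices, for each of the four conditions, to exhibit permutations $\alpha\neq\beta$ of $\mathbb{Z}_n$ satisfying the implication $v-u\in S\Rightarrow v^{\beta}-u^{\alpha}\in S$. Note that this route needs no $R$-thinness hypothesis, since Lemma \ref{TF} is stated for arbitrary connected non-bipartite graphs.

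For $\mathrm{C.}4$ the morphism is immediate: set $x^{\alpha}=rx$ and $x^{\beta}=rx+n/2$. Since $r\in\mathbb{Z}_n^{*}$ both maps are permutations, $\alpha\neq\beta$ because $n/2\neq0$, and if $v-u\in S$ then $v^{\beta}-u^{\alpha}=r(v-u)+n/2\in (n/2)+rS=S$. For $\mathrm{C.}1$ I would split by parity: put $x^{\alpha}=x+h$, $x^{\beta}=x$ when $x\in 2\mathbb{Z}_n$, and $x^{\alpha}=x$, $x^{\beta}=x+h$ when $x\notin 2\mathbb{Z}_n$. Because $h\in 2\mathbb{Z}_n$ these preserve the two parity classes and are bijections, and $\alpha\neq\beta$ since $0^{\alpha}=h\neq 0=0^{\beta}$. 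The TF-condition then splits into $v-u\in S_e$ (equal parities), where the shift contributes $\pm h$ and one uses $h+S_e=S_e$ together with $-h+S_e=S_e$, and $v-u\in S_o$ (opposite parities), where the two shifts either cancel or are both absent so that $v^{\beta}-u^{\alpha}=v-u\in S_o$.

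The substantive work lies in $\mathrm{C.}2'$ and $\mathrm{C.}3'$, where the correct morphisms are piecewise translations governed by finer congruence data. For $\mathrm{C.}2'$ I would partition $\mathbb{Z}_n$ according to residues modulo $4$ (available since $4\mid n$) and define $\alpha,\beta$ as translations by suitable multiples of the odd element $h$ on these classes; the hypotheses (a) $2h+S_o=S_o$ and (b) the gluing condition on $s\equiv0$ or $-h\pmod 4$ are precisely what is required to verify $v^{\beta}-u^{\alpha}\in S$ across the cases of $v-u\in S$. For $\mathrm{C.}3'$ I would use that $d\mathbb{Z}_n$ has even order $n/d$, hence $2d\mid n$, to define a well-defined sign function on $\mathbb{Z}_n$ (via $x\bmod 2d$), and then build $(\alpha,\beta)$ from translations by elements of $H$ twisted by this sign. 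Here the conditions ``$r/d$ odd for every $r\in R$'' and ``$H\not\subseteq d\mathbb{Z}_n$ or $H\subseteq 2d\mathbb{Z}_n$'' are exactly the constraints ensuring that edges whose difference lies in $S\setminus R$ and those whose difference lies in $R$ are both preserved.

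I expect the main obstacle to be the case analysis verifying the TF-condition for $\mathrm{C.}2'$ and $\mathrm{C.}3'$: one must check the defining implication separately on edges whose difference lies in $S_e$, in $S_o\setminus R$, and in $R$, confirm that the piecewise maps are genuine permutations, and ensure $\alpha\neq\beta$. Since the lemma merely consolidates established results, an alternative is to attribute each clause directly to its source---$\mathrm{C.}1$ and $\mathrm{C.}4$ to Wilson, $\mathrm{C.}3'$ to Qin et al., and $\mathrm{C.}2'$ to Hujdurovi\'c et al.---with the TF-morphism reformulation above serving as the bridge to the framework of the present paper.
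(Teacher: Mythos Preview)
Your closing alternative is exactly what the paper does: Lemma~\ref{wtype} carries no proof in the paper. It is stated as a compilation of known results, with the surrounding text and the remark immediately after it attributing $\mathrm{C}.1$ and $\mathrm{C}.4$ to Wilson and the corrected clauses to Qin et al.\ and Hujdurovi\'c et al. So there is no argument in the paper to compare your sketch against.

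That said, your TF-morphism route is valid and goes further than the paper. The explicit pairs you give for $\mathrm{C}.1$ and $\mathrm{C}.4$ are correct (your parity-split translation for $\mathrm{C}.1$ and the affine map $x\mapsto rx$, $x\mapsto rx+n/2$ for $\mathrm{C}.4$ both check out line by line), and your description of the shape of the $\mathrm{C}.2'$ and $\mathrm{C}.3'$ arguments---piecewise translations on congruence classes mod $4$ or mod $2d$, with the stated hypotheses used to handle the case split on $v-u\in S_e$, $S_o\setminus R$, $R$---is accurate, even if the full verification is indeed where the labour sits.

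One small correction on attribution: per the paper's remark, $\mathrm{C}.2'$ is the version due to Qin et al.\ and $\mathrm{C}.3'$ the version due to Hujdurovi\'c et al., the reverse of what you wrote in your final sentence.
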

\begin{remark}
The statements $\mathrm{C}.2'$ and $\mathrm{C}.3'$, due to Qin et al. \cite{QXZ2019} and Hujdurovi\'c et al. \cite{HMM2021} respectively, are slightly corrected versions of the original statements of
\cite[Theorems C.2 and C.3]{W2008}.
\end{remark}
The first example of a nontrivially unstable circulant graph that does not satisfy any conditions in Lemma \ref{wtype} was introduced by Qin et al. \cite{QXZ2019}. Subsequently, Hujdurovi\'c et al. \cite{HMM2021} established new sufficient conditions for the instability of circulant graphs. Their results demonstrate that these conditions can generate infinitely many nontrivially unstable circulant graphs, even when the criteria in Lemma \ref{wtype} are not met. The work of Hujdurovi\'c et al. is formalized in the following three lemmas.

\begin{lem}[{\cite[Theorem 3.2]{HMM2021}}]
\label{hmmtype1}
Let $\Gamma=\Cay(\mathbb{Z}_{2m}, S)$ be a circulant graph. Choose nontrivial subgroups $H$ and $K$ of $\mathbb{Z}_{2m}$, such that $|K|$ is even and let $K_o=K\setminus 2K$.
If either
\begin{enumerate}
 \item[\rm(1)] $S+H\subseteq S\cup(K_o+H)$ and $H\cap K_o =\emptyset$ or
 \item[\rm(2)] $(S\setminus K_o)+H\subseteq S\cup K_o$ and either $|H|\neq2$ or $4\mid|K|$,
\end{enumerate}
then $\Gamma$ is not stable.
\end{lem}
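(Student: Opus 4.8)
The statement is precisely \cite[Theorem 3.2]{HMM2021}, so my plan follows the two-fold-morphism viewpoint developed above. Since the conclusion is only that $\Gamma$ is \emph{unstable}, I would first dispose of the degenerate cases. A circulant graph always carries the nontrivial translations $a\mapsto a+g$ with $g\neq 0$, so if $\Gamma$ is disconnected, $R$-thick, or bipartite, then it is unstable for free, as noted in the introduction. Hence I may assume $\Gamma$ is connected, $R$-thin and non-bipartite, and by Lemma \ref{TF} it suffices to produce a nontrivial TF-morphism $(\alpha,\beta)$ of $\Gamma$, that is, a pair of permutations of $\mathbb{Z}_{2m}$ with $\alpha\neq\beta$ and
\[
b-a\in S \Longleftrightarrow \beta(b)-\alpha(a)\in S \quad\text{for all }a,b\in\mathbb{Z}_{2m}.
\]
Equivalently, writing the canonical double cover as $\Cay(\mathbb{Z}_{2m}\times\mathbb{Z}_2,\{(s,1):s\in S\})$, such a pair is a layer-preserving automorphism acting as $\alpha$ on $\mathbb{Z}_{2m}\times\{0\}$ and as $\beta$ on $\mathbb{Z}_{2m}\times\{1\}$ that does not descend to a single automorphism of $\Gamma$.

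The core step is to build $(\alpha,\beta)$ as a translation by an element of $H$, twisted according to the cosets determined by $K$. I would fix $h\in H\setminus\{0\}$ (possible since $H$ is nontrivial) and use that, because $|K|$ is even, the subgroup $2K$ has index $2$ in $K$ with nontrivial coset $K_o$. The maps $\alpha$ and $\beta$ should be defined so that on each relevant coset their values differ from the identity by a shift lying in $H$, arranged so that $b-a\in S$ forces $\beta(b)-\alpha(a)\in S$ \emph{unless} the induced shift carries $b-a$ into the region $K_o+H$ (in case (1)) or into $K_o$ (in case (2)); in that exceptional region the twist between the two layers of the double cover restores adjacency, so the displayed equivalence holds in both directions. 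The inclusions $S+H\subseteq S\cup(K_o+H)$ in (1) and $(S\setminus K_o)+H\subseteq S\cup K_o$ in (2) are exactly the bookkeeping identities that make this construction consistent.

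The verification then splits into checking, case by case on which piece of the coset partition $a$ and $b$ lie in, that $b-a\in S$ holds precisely when $\beta(b)-\alpha(a)\in S$; each case is a direct membership computation driven by the relevant inclusion, and this adjacency bookkeeping is comparatively routine. The main obstacle, and the place where the side hypotheses are indispensable, is twofold: first, showing that the twisted maps $\alpha,\beta$ are genuine permutations of $\mathbb{Z}_{2m}$ rather than merely self-maps, and second, guaranteeing $\alpha\neq\beta$ so that the TF-morphism is nontrivial. The disjointness condition $H\cap K_o=\emptyset$ in (1) is what prevents the twist from collapsing and secures both bijectivity and nontriviality, while in (2) the alternative ``$|H|\neq 2$ or $4\mid|K|$'' is the parity hypothesis needed to keep the two-layer twist fixed-point-free in the borderline case $|H|=2$. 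Establishing these permutation and nontriviality properties rigorously, as carried out in \cite{HMM2021}, is the technical heart of the argument.
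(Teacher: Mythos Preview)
The paper does not prove this lemma at all: it is quoted verbatim as \cite[Theorem~3.2]{HMM2021} and used as a black box, so there is no ``paper's own proof'' to compare against. Your sketch is a faithful outline of the argument in \cite{HMM2021} itself---build a nontrivial TF-morphism by translating by an element of $H$ and twisting across the $K_o$/$2K$ coset structure, with the side hypotheses ensuring bijectivity and $\alpha\neq\beta$---and is an appropriate plan if you actually need to supply a proof rather than a citation.
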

\begin{lem}[{\cite[Proposition 3.7]{HMM2021}}]
\label{hmmtype2}
Let $\Gamma=\Cay(\mathbb{Z}_{2m}, S)$ be a circulant graph.
If $\Gamma\cong\Cay(\mathbb{Z}_{2m}, S+m)$, then $\Gamma$ is unstable.
\end{lem}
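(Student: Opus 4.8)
The plan is to realise the hypothesised isomorphism as a nontrivial two-fold morphism of $\Gamma$ and then invoke Lemma \ref{TF}. First I would dispose of the degenerate cases: if $\Gamma$ is disconnected it is unstable by definition, and if $\Gamma$ is bipartite then, being a circulant (hence vertex-transitive) graph, it admits the nontrivial translation $x\mapsto x+1$ as an automorphism, so it is a bipartite graph with a nontrivial automorphism and is again unstable. Thus it suffices to treat the case in which $\Gamma=\Cay(\mathbb{Z}_{2m},S)$ is connected and non-bipartite, which is exactly the setting in which Lemma \ref{TF} applies.

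Let $\varphi$ be an isomorphism from $\Gamma$ to $\Gamma':=\Cay(\mathbb{Z}_{2m},S+m)$. By definition of these Cayley graphs, for all $a,b\in\mathbb{Z}_{2m}$ we have $b-a\in S$ if and only if $\varphi(b)-\varphi(a)\in S+m$, equivalently $\big(\varphi(b)-m\big)-\varphi(a)\in S$. This last form is precisely what is needed to split $\varphi$ into two ``legs''. Concretely, I would define permutations $\alpha,\beta$ of $\mathbb{Z}_{2m}$ by $x^{\alpha}=\varphi(x)$ and $x^{\beta}=\varphi(x)-m$; both are bijections, being $\varphi$ followed by a translation. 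With this choice $b^{\beta}-a^{\alpha}=\big(\varphi(b)-m\big)-\varphi(a)$, so the displayed equivalence reads $a\sim_{\Gamma}b\Longleftrightarrow a^{\alpha}\sim_{\Gamma}b^{\beta}$. In particular $(\alpha,\beta)$ is a two-fold morphism of $\Gamma$; one should also record the symmetric implication $b^{\alpha}\sim_{\Gamma}a^{\beta}$ when $a\sim_{\Gamma}b$, which follows identically using $S=-S$.

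It remains to check nontriviality and conclude. Since $m\neq 0$ in $\mathbb{Z}_{2m}$, for every $x$ we have $x^{\beta}=\varphi(x)-m\neq\varphi(x)=x^{\alpha}$, so $\alpha\neq\beta$ and the two-fold morphism is nontrivial. As $\Gamma$ is connected and non-bipartite, Lemma \ref{TF} then yields that $\Gamma$ is unstable, which together with the degenerate cases handled above completes the proof.

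The construction itself is short; the part that requires care is the preliminary reduction, since Lemma \ref{TF} characterises instability only for connected non-bipartite graphs whereas the isomorphism hypothesis says nothing about connectivity or bipartiteness. Once that reduction is in place, the only genuine content is the observation that composing the isomorphism $\varphi$ with the translation by $-m$ produces exactly the two legs $\alpha$ and $\beta$ of a two-fold morphism, and that the shift by $m\neq 0$ is what guarantees $\alpha\neq\beta$.
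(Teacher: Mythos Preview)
Your argument is correct. The paper does not supply its own proof of this lemma; it is quoted verbatim from \cite[Proposition~3.7]{HMM2021}. That said, your strategy---turning the given isomorphism into a nontrivial TF-morphism by composing with the translation by $m$, then appealing to Lemma~\ref{TF} after disposing of the disconnected and bipartite cases---is exactly the mechanism the paper itself uses when it proves the closely related Theorem~\ref{ncon} (there with $x^{\alpha}=x^{\sigma}+m$ and $x^{\beta}=(x+m)^{\sigma}$). So while there is no in-paper proof to compare against, your approach is both correct and fully in the spirit of the surrounding arguments.
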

\begin{lem}[{\cite[Proposition 3.12]{HMM2021}}]
\label{hmmtype3}
Let $\Gamma=\Cay(\mathbb{Z}_{2m}, S)$ be a circulant graph.
If there is a nontrivial TF-morphism $(\alpha,\beta)$ of $\Cay(2\mathbb{Z}_{2m}, (2\mathbb{Z}_{2m})\cap S)$ and a subgroup $H$ of $\mathbb{Z}_{2m}$ such that $v+H\subseteq S$ for all $v\in S\setminus2\mathbb{Z}_{2m}$ and $v^\alpha-v,v^\beta-v\in H$ for all $v\in2\mathbb{Z}_{2m}$, then $\Gamma$ is unstable.
\end{lem}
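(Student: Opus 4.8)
The plan is to produce an explicit nontrivial TF-morphism $(\phi,\psi)$ of $\Gamma$ and then conclude instability, either via Lemma~\ref{TF} when $\Gamma$ is connected and non-bipartite, or directly by exhibiting the associated layer-preserving automorphism of $\Gamma\times K_2$. Write $V(\Gamma)=\mathbb{Z}_{2m}$ as the disjoint union of the even part $2\mathbb{Z}_{2m}$ and the odd part $1+2\mathbb{Z}_{2m}$. Since the given pair $(\alpha,\beta)$ consists of permutations of $2\mathbb{Z}_{2m}$, I would define $\phi$ and $\psi$ on $\mathbb{Z}_{2m}$ by letting them agree with $\alpha$ and $\beta$ respectively on the even part and act as the identity on the odd part. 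These are well-defined permutations of $\mathbb{Z}_{2m}$, and $\phi\neq\psi$ because $\alpha\neq\beta$ as permutations of $2\mathbb{Z}_{2m}$, so the candidate morphism is nontrivial.

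The core of the argument is verifying that $u\sim_\Gamma v$ implies $v^{\psi}-u^{\phi}\in S$, and I would organize this by the parities of $u$ and $v$. If $u$ and $v$ are both even, then $v-u\in(2\mathbb{Z}_{2m})\cap S$, so the edge restricts to an edge of $\Cay(2\mathbb{Z}_{2m},(2\mathbb{Z}_{2m})\cap S)$, and the TF-morphism property of $(\alpha,\beta)$ on that subgraph yields $v^{\beta}-u^{\alpha}\in(2\mathbb{Z}_{2m})\cap S\subseteq S$. If $u$ and $v$ are both odd, then $\phi$ and $\psi$ fix them and $v^{\psi}-u^{\phi}=v-u\in S$. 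The mixed-parity cases are where the hypotheses on $H$ enter, and they constitute the main obstacle to handle cleanly.

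For a mixed edge, say $u$ even and $v$ odd, the difference $v-u$ is odd and hence lies in $S\setminus 2\mathbb{Z}_{2m}$, while $v^{\psi}-u^{\phi}=v-u^{\alpha}=(v-u)+(u-u^{\alpha})$. Here $u^{\alpha}-u\in H$ by hypothesis, so $u-u^{\alpha}\in H$ since $H$ is a subgroup, and the condition $w+H\subseteq S$ for every $w\in S\setminus 2\mathbb{Z}_{2m}$, applied to $w=v-u$, forces $v-u^{\alpha}\in S$; the symmetric case ($u$ odd, $v$ even) is identical, with $v^{\beta}-v\in H$ playing the role of the shift. This exhausts all cases, so $(\phi,\psi)$ is a nontrivial TF-morphism of $\Gamma$. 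To finish, I would define $\Theta$ on $V(\Gamma\times K_2)=\mathbb{Z}_{2m}\times\{0,1\}$ by $(u,0)^{\Theta}=(u^{\phi},0)$ and $(u,1)^{\Theta}=(u^{\psi},1)$; the TF-morphism property together with Remark~\ref{rtfs} makes $\Theta$ an automorphism of $\Gamma\times K_2$ that preserves the two layers but acts differently on them, so $\Theta\notin\Aut(\Gamma)\times\mathbb{Z}_2$ and $\Gamma$ is unstable. The only delicate point is the bookkeeping of the parity cases and the sign of the $H$-shift; everything else is routine.
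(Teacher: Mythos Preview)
The paper does not supply its own proof of this lemma; it is quoted verbatim from \cite{HMM2021} and used as a black box. Your argument is correct and is essentially the natural direct proof: extending $(\alpha,\beta)$ by the identity on the odd coset $1+2\mathbb{Z}_{2m}$ produces a nontrivial TF-morphism of $\Gamma$, with the hypothesis $v+H\subseteq S$ for odd $v\in S$ absorbing precisely the $H$-shift that appears in the mixed-parity edges, and your final step of building the layer-preserving automorphism $\Theta$ of $\Gamma\times K_2$ correctly disposes of the cases (disconnected or bipartite $\Gamma$) not covered by Lemma~\ref{TF}.
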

\begin{remark}
Lemma \ref{hmmtype1} extends Wilson's conditions $\mathrm{C.}1$, $\mathrm{C.}2'$ and $\mathrm{C.}3'$ (see \cite[Proposition 3.4]{HMM2021}), while Lemma \ref{hmmtype2} generalizes Wilson's condition $\mathrm{C.}4$. On the other hand, Lemma \ref{hmmtype3} provides a method for constructing large unstable circulant graph from smaller ones.
\end{remark}
\medskip
Inspired by Proposition \ref{dihedral}, we investigate the role of involutory automorphisms in graph instability and derive a new sufficient condition for the instability of circulant graphs (Theorem \ref{ncon}). We now restate and prove Theorem \ref{ncon}.
\begin{ncon}
Let $\Gamma=\Cay(\mathbb{Z}_{2m},S)$ be a circulant graph. If $\Cay\big(\mathbb{Z}_{2m},(S\setminus\{m\})+m\big)$ has an automorphism fixing $0$ but moving $m$, then $\Gamma$ is unstable.
\end{ncon}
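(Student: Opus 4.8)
The plan is to manufacture a nontrivial two-fold morphism of $\Gamma$ and then read off from it an automorphism of $\Gamma\times K_2$ lying outside $\Aut(\Gamma)\times\mathbb{Z}_2$. Write $\mu$ for the translation $x\mapsto x+m$ of $\mathbb{Z}_{2m}$; since $2m=0$ we have $-m=m$ and $\mu^{2}=1$, so $\mu$ is an involution. Put $T=(S\setminus\{m\})+m$, $\Gamma'=\Cay(\mathbb{Z}_{2m},T)$, and let $\phi\in\Aut(\Gamma')$ satisfy $0^{\phi}=0$ and $m^{\phi}\ne m$. The first routine step is to unwind $T$: an element $t$ lies in $T$ exactly when $t-m\in S$ and $t\ne 0$, so for distinct $x,y$ one has $x\sim_{\Gamma'}y\iff y-x-m\in S$, whereas $x\sim_{\Gamma}y\iff y-x\in S$.

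Next I would set $\alpha=\phi$ and $\beta=\mu\phi\mu$, so that $x^{\alpha}=x^{\phi}$ and $y^{\beta}=(y+m)^{\phi}+m$, and verify that $(\alpha,\beta)$ is a two-fold morphism of $\Gamma$. Suppose $x\sim_{\Gamma}y$, i.e.\ $y-x\in S$. If moreover $y-x\ne m$, then $x\ne y+m$ and $x\sim_{\Gamma'}y+m$ (because $(y+m)-x-m=y-x\in S$ and $(y+m)-x\ne 0$); applying the automorphism $\phi$ gives $(y+m)^{\phi}-x^{\phi}\in T$, hence $(y+m)^{\phi}-x^{\phi}-m\in S$, and using $2m=0$ this equals $y^{\beta}-x^{\alpha}$, so $x^{\alpha}\sim_{\Gamma}y^{\beta}$. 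The remaining case $y-x=m$ (which forces $m\in S$) is exactly the one that the deletion of $m$ in the construction of $T$ is designed to absorb: here $y+m=x$, so $y^{\beta}=x^{\phi}+m$ and $y^{\beta}-x^{\alpha}=m\in S$, again giving $x^{\alpha}\sim_{\Gamma}y^{\beta}$. By Remark \ref{rtfs} the forward implication is enough, so $(\alpha,\beta)$ is a two-fold morphism.

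It then remains to see that it is nontrivial and to conclude. Evaluating at $0$ gives $0^{\alpha}=0^{\phi}=0$ while $0^{\beta}=m^{\phi}+m$; since $m^{\phi}\ne m=-m$ we get $m^{\phi}+m\ne 0$, whence $\alpha\ne\beta$. Finally, a nontrivial two-fold morphism $(\alpha,\beta)$ of $\Gamma$ produces the permutation $\psi$ of $V(\Gamma)\times\{0,1\}$ fixing each layer and acting by $\alpha$ on $V(\Gamma)\times\{0\}$ and by $\beta$ on $V(\Gamma)\times\{1\}$; the (iff form of the) two-fold morphism condition shows $\psi\in\Aut(\Gamma\times K_2)$, and because $\psi$ preserves the two layers while inducing different permutations $\alpha\ne\beta$ on them, it cannot lie in $\Aut(\Gamma)\times\mathbb{Z}_2$, every layer-preserving element of which acts by a single $\sigma\in\Aut(\Gamma)$ on both layers. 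Hence $\Gamma$ is unstable. (Alternatively, after disposing of the trivially unstable disconnected and bipartite circulant cases, one may simply invoke Lemma \ref{TF}.)

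I expect the only genuine obstacle to be the bookkeeping around the order-two element $m$: pinning down the $T$-adjacency relation and checking that the single exceptional difference $y-x=m$ is handled correctly. This exceptional case is precisely where removing $m$ from $S$ before translating by $m$ becomes essential, and it is the crux that makes the whole construction go through.
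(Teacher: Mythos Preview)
Your proof is correct and follows essentially the same route as the paper: your pair $(\alpha,\beta)=(\phi,\mu\phi\mu)$ differs from the paper's $(\sigma\mu,\mu\sigma)$ only by post-composition with the translation $\mu\in\Aut(\Gamma)$, so the two TF-morphisms are equivalent. Your direct construction of $\psi\in\Aut(\Gamma\times K_2)\setminus(\Aut(\Gamma)\times\mathbb{Z}_2)$ is a small bonus, as it avoids the connected/non-bipartite hypotheses that a bare citation of Lemma~\ref{TF} would require.
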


\begin{proof}
Let $\sigma$ be an automorphism of $\Cay\big(\mathbb{Z}_{2m},(S\setminus\{m\})+m\big)$ such that $0^\sigma=0$ and  $m^\sigma\neq m$. Then $y-x\in (S\setminus\{m\})+m\Longleftrightarrow y^\sigma-x^\sigma\in (S\setminus\{m\})+m$ for each pair of elements $x,y\in \mathbb{Z}_{2m}$. Define two permutations $\alpha$ and $\beta$ on $\mathbb{Z}_{2m}$ by the rule:
$x^\alpha=x^\sigma+m$ and $x^\beta=(x+m)^\sigma$ for all $x\in\mathbb{Z}_{2m}$. Then $\alpha\neq \beta$ as $0^\alpha=0^\sigma+m=m$ and $0^\beta=m^\sigma\neq m$.
Since
\begin{equation*}
  (x+m)^{\beta}-x^{\alpha}=x^{\sigma}-(x^{\sigma}+m)=m
\end{equation*}
and
\begin{align*}
  x\sim_{\Gamma}y\Longleftrightarrow&
  y-x\in S\setminus\{m\}\\
\Longleftrightarrow&y+m-x\in (S\setminus\{m\})+m\\
\Longleftrightarrow&(y+m)^{\sigma}-x^{\sigma}\in (S\setminus\{m\})+m\\
\Longleftrightarrow&(y+m)^{\sigma}-(x^{\sigma}+m)\in (S\setminus\{m\})\\
\Longleftrightarrow&y^\beta-x^\alpha\in (S\setminus\{m\})\\
\Longleftrightarrow&y^\beta\sim_{\Gamma}x^\alpha
\end{align*}
for each pair of elements $x,y\in \mathbb{Z}_{2m}$ with $y\neq x+m$, it follows that $(\alpha,\beta)$ is a nontrivial TF-morphism of $\Gamma$. By Lemma \ref{TF}, $\Gamma$ is unstable.
\end{proof}
The following lemma presents a method for constructing a circular graph that satisfies the condition in Theorem \ref{ncon}.
\begin{lem}
\label{oldtonew}
Let $h$ be a positive integer and let $S$ be an inverse-closed subset of $\mathbb{Z}_{4h}$. Let $K$ be a subgroup of $\mathbb{Z}_{4h}$ of order twice an odd integer and set $K_o=K\setminus 2K$. If $(S\setminus K_o)+\langle h\rangle\subseteq S\cup K_o$, then the Cayley graph $\Cay\big(\mathbb{Z}_{4h},(S\setminus\{2h\})+2h\big)$ has an automorphism fixing $0$ but moving $2h$.
\end{lem}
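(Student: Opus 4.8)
The plan is to construct the required automorphism explicitly. Put $P:=2K$ and write $\Gamma'=\Cay(\mathbb{Z}_{4h},S')$ with $S':=(S\setminus\{2h\})+2h$. I first record the facts I will use. Since $|K|=2t$ with $t$ odd, $P$ is the odd-order subgroup of $K$ of index $2$, so $K_o=K\setminus P=2h+P$ is a single coset of $P$; also $2h$ is the unique involution of $\mathbb{Z}_{4h}$, $\langle h\rangle=\{0,h,2h,3h\}$ has order $4$ with $-h=3h$ and $-2h=2h$, and $\langle h\rangle\cap P=\{0\}$ because $P$ has odd order. Hence the four cosets $P,\,h+P,\,2h+P,\,3h+P$ are pairwise distinct and $2h+P=K_o$, so there is a well-defined index $c(z)\in\{0,1,2,3\}$ with $z\in c(z)h+P$ (undefined when $z\notin\langle h\rangle+P$). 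I then define $\sigma$ to be the involution swapping the cosets $h+P$ and $2h+P$ and fixing everything else:
\[
\sigma(z)=\begin{cases} z+h, & z\in h+P,\\ z-h, & z\in 2h+P,\\ z, & \text{otherwise.}\end{cases}
\]
This is a genuine permutation since $(h+P)+h=2h+P$ and $(2h+P)-h=h+P$; it fixes $0\in P$ and sends $2h$ to $h\neq 2h$, as required. (A pure translation cannot do this: adding a fixed element on a union of $\langle 2h\rangle$-cosets moves $2h$ only if it also moves $0$, which is why the coset swap is forced.)

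It remains to verify $\sigma\in\Aut(\Gamma')$, i.e.\ $y-x\in S'\Leftrightarrow\sigma(y)-\sigma(x)\in S'$ for all $x,y$. Writing $\sigma(z)=z+\epsilon(z)h$ with $\epsilon(z)\in\{0,\pm1\}$ a function of $c(z)$, we get $\sigma(y)-\sigma(x)=(y-x)+kh$ with $k=\epsilon(y)-\epsilon(x)\in\{0,\pm1,\pm2\}$; the case $k=0$ is immediate. Using $d\in S'\Leftrightarrow d-2h\in S\setminus\{2h\}$ and setting $e_1:=(y-x)-2h$ and $e_2:=e_1+kh$, the claim reduces to $e_1\in S\setminus\{2h\}\Leftrightarrow e_2\in S\setminus\{2h\}$. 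The key point is that $e_1,e_2\notin K_o$ whenever $k\neq 0$: indeed $e_1\in K_o\Leftrightarrow y-x\in P\Leftrightarrow c(x)=c(y)$, but $k\neq0$ forces $\epsilon(x)\neq\epsilon(y)$ and hence $c(x)\neq c(y)$; likewise $e_2\in K_o\Leftrightarrow c(\sigma(x))=c(\sigma(y))$, which fails because $\sigma$ permutes the four cosets bijectively. (If $x$ or $y$ lies outside $\langle h\rangle+P$, then so do $e_1,e_2$, and again $e_1,e_2\notin K_o$.)

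Since $2h\in K_o$, the condition $e_i\notin K_o$ gives $e_i\neq 2h$, so the exceptional element drops out and it suffices to prove $e_1\in S\Leftrightarrow e_2\in S$. Now $e_2-e_1=kh\in\langle h\rangle$ and $e_1,e_2\notin K_o$; applying the hypothesis $(S\setminus K_o)+\langle h\rangle\subseteq S\cup K_o$ to $e_1$ with the translate $kh$ gives $e_1\in S\Rightarrow e_2\in S$, while applying it to $e_2$ with the complementary translate $(-k)h\in\langle h\rangle$ gives the converse. This yields $\sigma\in\Aut(\Gamma')$ and finishes the proof. I expect the main obstacle to be exactly the middle step—showing $e_1,e_2\notin K_o$—since this is what upgrades the one-directional inclusion in the hypothesis to the biconditional that an automorphism requires; the index function $c$ is the device that makes this bookkeeping transparent.
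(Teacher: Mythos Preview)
Your proof is correct, and in fact cleaner than the paper's. Both arguments construct an explicit permutation $\sigma$ on $\mathbb{Z}_{4h}$ that acts coset-wise with respect to $P=2K$, but the two permutations are different. The paper defines
\[
x^{\sigma}=\begin{cases} x, & x\in P,\\ x+2h, & x\in h+P,\\ x-h, & \text{otherwise,}\end{cases}
\]
which on the four $P$-cosets inside $\langle h\rangle+P$ acts as the $3$-cycle $h+P\to 3h+P\to 2h+P\to h+P$ and shifts every element outside $\langle h\rangle+P$ by $-h$. Verifying that this map preserves $S^{*}$ then requires a three-case split, the last of which branches into twelve subcases according to the pair of cosets containing $x$ and $y$.

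Your $\sigma$ is the simple involution swapping $h+P$ and $2h+P$ and fixing everything else, and your index function $c$ lets you treat all nontrivial cases uniformly: once you observe that $k\neq 0$ forces $e_1,e_2\notin K_o$ (because $\sigma$ permutes the $P$-cosets in $\langle h\rangle+P$ bijectively and fixes the complement), the one-sided hypothesis $(S\setminus K_o)+\langle h\rangle\subseteq S\cup K_o$ immediately upgrades to the two-sided statement $e_1\in S\Leftrightarrow e_2\in S$, and the exceptional element $2h\in K_o$ drops out automatically. This avoids the paper's case enumeration entirely; it also makes transparent why the hypothesis is exactly what is needed, since the $K_o$-avoidance is precisely what converts the inclusion into an equivalence.
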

\begin{proof}
 Set $\Sigma=\Cay\big(\mathbb{Z}_{4h},S^*)$ where $S^*=(S\setminus\{2h\})+2h$. Define a permutation $\sigma$ on $\mathbb{Z}_{4h}$ as follows:
\begin{equation*}
  x^\sigma=\left\{
             \begin{array}{ll}
               x, & \hbox{if}~x\in 2K; \\
               x+2h, & \hbox{if}~x\in h+2K; \\
               x-h, & \hbox{otherwise.}
             \end{array}
           \right.
\end{equation*}
It is straightforward to verify that $\sigma$ is a well defined permutation fixing $0$ but moving $2h$. We now show that $\sigma$ is an automorphism of $\Sigma$.
Let $x$ and $y$ be two adjacent elements in $\Sigma$. Then $x-y\in S^*$ and hence $x-y+2h\in S\setminus\{2h\}$. To prove that $\sigma$ is an automorphism, we verify that $x^{\sigma}-y^{\sigma}\in S^*$ by examining the following three cases.

\textsf{Case 1:}  $x,y\notin \langle h,K\rangle$ or $x$ and $y$ lie in the same left coset of $2K$ in $\mathbb{Z}_{4h}$.

Here, $x^{\sigma}-y^{\sigma}=x-y$, so $x^{\sigma}-y^{\sigma}\in S^*$ holds trivially.

\textsf{Case 2:}  Exactly one of $x$ or $y$ is in $\langle h,K\rangle$.

In this case, $(x-y+\langle h\rangle)\cap K_o=\emptyset$. Since $x-y+2h\in S\setminus\{2h\}$ and $(S\setminus K_o)+\langle h\rangle\subseteq S\cup K_o$, it follows that $x-y+\langle h\rangle\subseteq S\setminus K_o$. Observing $x^{\sigma}-y^{\sigma}\in x-y+\langle h\rangle$, we have $x^{\sigma}-y^{\sigma}\in S\setminus K_o$. Consequently, $x^{\sigma}-y^{\sigma}+2h\in S\setminus\{2h\}$ and hence $x^{\sigma}-y^{\sigma}\in S^*$.

\textsf{Case 3:}  $x$ and $y$  belong to different  left cosets of $2K$ in  $\langle h,K\rangle$.

We analyze the possible subcases:
\begin{itemize}
  \item If $x\in 2h+2K$ and $y\in 2K$, then $x-y+2h\in 2K$ and $x^{\sigma}-y^{\sigma}+2h=x-y+h\notin K_o$.
  \item If $x\in h+2K$ and $y\in 2K$, then $x-y+2h\in -h+2K$ and  $x^{\sigma}-y^{\sigma}+2h=x-y\notin K_o$.
  \item If $x\in -h+2K$ and $y\in 2K$, then $x-y+2h\in h+2K$ and  $x^{\sigma}-y^{\sigma}+2h=x-y+h\notin K_o$.
    \item If $x\in 2K$ and $y\in 2h+2K$, then $x-y+2h\in 2K$ and  $x^{\sigma}-y^{\sigma}+2h=x-y-h\notin K_o$.
  \item If $x\in h+2K$ and $y\in 2h+2K$, then $x-y+2h\in h+2K$ and  $x^{\sigma}-y^{\sigma}+2h=x-y+h\notin K_o$.
  \item If $x\in -h+2K$ and $y\in 2h+2K$, then $x-y+2h\in -h+2K$ and  $x^{\sigma}-y^{\sigma}+2h=x-y+2h\notin K_o$.
  \item If $x\in 2h+2K$ and $y\in h+2K$, then $x-y+2h\in -h+2K$ and  $x^{\sigma}-y^{\sigma}+2h=x-y-h\notin K_o$.
  \item If $x\in 2K$ and $y\in h+2K$, then $x-y+2h\in h+2K$ and $x^{\sigma}-y^{\sigma}+2h=x-y\notin K_o$.
  \item If $x\in -h+2K$ and $y\in h+2K$, then $x-y+2h\in 2K$ and $x^{\sigma}-y^{\sigma}+2h=x-y-h\notin K_o$.
    \item If $x\in 2h+2K$ and $y\in -h+2K$, then $x-y+2h\in h+2K$ and $x^{\sigma}-y^{\sigma}+2h=x-y+2h\notin K_o$.
  \item If $x\in h+2K$ and $y\in -h+2K$, then $x-y+2h\in 2K$ and $x^{\sigma}-y^{\sigma}+2h=x-y+h\notin K_o$.
  \item If $x\in 2K$ and $y\in -h+2K$, then $x-y+2h\in -h+2K$ and $x^{\sigma}-y^{\sigma}+2h=x-y-h\notin K_o$.
\end{itemize}
In all subcases, $x-y+2h\notin K_o$ and $x^{\sigma}-y^{\sigma}+2h\in (x-y+2h+\langle h\rangle)\setminus K_o$. Since $x-y+2h\in S\setminus\{2h\}$ and $(S\setminus K_o)+\langle h\rangle\subseteq S\cup K_o$, it follows that  $x^{\sigma}-y^{\sigma}+2h\in S\setminus \{2h\}$. Therefore, $x^{\sigma}-y^{\sigma}\in S^*$, completing the proof.
\end{proof}

\medskip
We have been unable to find a method for constructing a circular graph that satisfies the condition in Theorem \ref{ncon} but fails to satisfy the condition in Lemma \ref{oldtonew}. This leads us to propose the following question.
\begin{ques}
Let $h$ be a positive integer, and let $S$ be an inverse-closed subset of $\mathbb{Z}_{4h}$ such that the circulant graph $\Cay\big(\mathbb{Z}_{4h},(S\setminus\{2h\})+2h\big)$ has an automorphism fixing $0$ but moving $2h$. Is there a subgroup $K$ of $\mathbb{Z}_{4h}$ of order twice an odd integer such that $(S\setminus K_o)+\langle h\rangle\subseteq S\cup K_o$ where $K_o=K\setminus 2K$?
\end{ques}

\medskip
We observe that multiple conditions in Lemmas \ref{hmmtype1},~\ref{hmmtype2} and Theorem \ref{ncon} may apply to a single unstable circulant graph. To refine these overlapping conditions, we establish the following lemma.
\begin{lem}
\label{equi}
Let $\Gamma=\Cay(\mathbb{Z}_{2m},S)$ be a circulant graph which satisfies the conditions in Lemma \ref{hmmtype1}.
\begin{enumerate}
  \item If $2\nmid|H|$, then $(S\setminus L_o)+H=S\setminus L_o$ where $L=K+H$ and $L_o=L\setminus2L$.
  \item If $2\parallel|H|$ and $m\notin H\setminus K_{o}$, then $|H|>2$ and $(S\setminus L_o)+2H=S\setminus L_o$ where $L=K+H$ and $L_o=L\setminus2L$.
  \item If $m\in H\setminus K_{o}$, then $\Gamma\cong\Cay(\mathbb{Z}_{2m}, S+m)$.

  \item If $4\mid|H|$ and $m\notin H\setminus K_{o}$, then $\Cay\big(\mathbb{Z}_{2m},(S\setminus\{m\})+m\big)$ has an automorphism fixing $0$ but moving $m$.
\end{enumerate}
\end{lem}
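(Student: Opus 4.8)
My plan is to treat the four parts separately, organizing everything around the position of the involution $m$ relative to $K$ and around the join $L=K+H$. As a preliminary I would record the elementary facts that $m$ is the unique involution of $\mathbb{Z}_{2m}$, that $m\in K$ (since $|K|$ is even), and that $m\in K_o\iff 2\parallel|K|$ while $m\in 2K\iff 4\mid|K|$. I would also observe that in parts (2) and (4) the hypotheses force $m\in H\cap K_o$ (as $m\in H$ from $|H|$ even, and $m\in K_o$ from $m\notin H\setminus K_o$), which is incompatible with the requirement $H\cap K_o=\emptyset$ of condition~(1) of Lemma~\ref{hmmtype1}; hence in those two parts only condition~(2), namely $(S\setminus K_o)+H\subseteq S\cup K_o$, can hold, and I may work with that inequality directly.

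For parts (1) and (2) the engine is an invariance statement over the larger group $L=K+H$. The key computation is that the ``leakage set'' into which $S$ may escape---$K_o$ under condition~(2), or $K_o+H$ under condition~(1)---is contained in $L_o=L\setminus 2L$ and disjoint from $2L$. Granting this, suppose $s\in S\setminus L_o$ and $g\in H$ (part (1)) or $g\in 2H$ (part (2)); then $g\in 2L$ (using $2H=H$ when $|H|$ is odd, and $2H\subseteq 2L$ when $2\parallel|H|$), so a short check gives $s+g\notin L_o$, whence the hypothesis forces $s+g\in S$. Thus $S\setminus L_o$ is closed under the relevant translations, yielding $(S\setminus L_o)+H=S\setminus L_o$ in part (1) and $(S\setminus L_o)+2H=S\setminus L_o$ in part (2); since $L$ has even order and the translating subgroup ($H$, resp.\ $2H$ of order $|H|/2$) has odd order and lies in $L$, this is exactly the hypothesis of Theorem~\ref{hmmtype}~(i). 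The inequality $|H|>2$ in part (2) comes from $2\parallel|K|$ (forced by $m\in K_o$) together with the side condition ``$|H|\neq2$ or $4\mid|K|$'' of Lemma~\ref{hmmtype1}~(2). The only delicate point is the $2$-adic bookkeeping behind $K_o\subseteq L_o$ (and $K_o+H\subseteq L_o$ under condition~(1)), which rests on the fact that the relevant intersection, $K\cap H$ in part (1) and $K\cap 2H$ in part (2), has odd order and therefore lies in $2K$.

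Part (4) is the cleanest and I would dispatch it by quoting Lemma~\ref{oldtonew}. Since $4\mid|H|$ the element $m$ is even and the order-$4$ subgroup $\langle m/2\rangle$ of $\mathbb{Z}_{2m}$ lies in $H$, so $(S\setminus K_o)+\langle m/2\rangle\subseteq(S\setminus K_o)+H\subseteq S\cup K_o$; moreover $m\in K_o$ gives $2\parallel|K|$, i.e.\ $|K|$ is twice an odd integer. These are precisely the hypotheses of Lemma~\ref{oldtonew} with $h=m/2$ (so that $4h=2m$ and $2h=m$), which produces an automorphism of $\Cay(\mathbb{Z}_{2m},(S\setminus\{m\})+m)$ fixing $0$ and moving $m$, as required.

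Part (3) is where I expect the main difficulty. Here $m\in 2K$, so $m$ is even, and I would first prove that $D:=S\,\triangle\,(S+m)\subseteq K_o$: for $d\notin K_o$ one has $d+m\notin K_o$ (because $m\in 2K$ and $K_o+2K=K_o$), and $(S\setminus K_o)+m\subseteq S\cup K_o$ then prevents $d$ and $d+m$ from lying on opposite sides of $S$, so $d\notin D$; in particular $m\notin D$, whence $m\notin S$ and $S+m$ is a genuine connection set. A multiplier will not suffice in general, since when $K_o$ contains even elements no unit of $\mathbb{Z}_{2m}$ carries $S$ to $S+m$; instead I would build a possibly non-affine isomorphism $\psi(x)=x+f(x)$ with $f\colon\mathbb{Z}_{2m}\to\{0,m\}$, which carries $\Gamma$ to $\Cay(\mathbb{Z}_{2m},S+m)$ exactly when $f(x+d)=f(x)+m$ for every $d\in D$, i.e.\ when $f$ properly $2$-colours $\Cay(\mathbb{Z}_{2m},D)$. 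As $D\subseteq K_o$ is disjoint from $2K$, the subgroup $\langle D\rangle\cap 2K$ has index $2$ in $\langle D\rangle$ and misses $D$, so $\Cay(\mathbb{Z}_{2m},D)$ is bipartite and such an $f$ exists; and since $m\in 2K$ I can take $f$ invariant under $+m$, which makes $\psi$ injective and hence an isomorphism. The hardest bookkeeping, and the point I would scrutinize most, is handling condition~(1) of Lemma~\ref{hmmtype1} in this part: there $D$ need only lie in $K_o+H$ rather than in $K_o$, so I would either first argue that condition~(1) reduces to condition~(2) for graphs of this type (using inverse-closure of $S$, which already blocks the naive counterexamples) or re-run the colouring argument with $K_o+H$ in place of $K_o$, verifying that it still sits in the nontrivial coset of an index-$2$ subgroup and that $+m$ preserves the resulting bipartition.
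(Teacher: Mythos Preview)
Your plan is correct and matches the paper in all four parts; the preliminary reductions (forcing condition~(2) of Lemma~\ref{hmmtype1} in parts~(ii) and~(iv), the $2$-adic bookkeeping behind $K_o\subseteq L_o$, and the appeal to Lemma~\ref{oldtonew} with $h=m/2$) are exactly what the paper does.

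For part~(iii) the paper resolves your hedged case by taking precisely your ``second option'': it sets $L=K+H$ under condition~(1) and $L=K$ under condition~(2), checks that $L_o=K_o+H$ in the former case (using that $H\cap K_o=\emptyset$ forces $H\cap K\subseteq 2K$, whence $2K+H=2L$), and proves $(S\setminus L_o)+m=S\setminus L_o$ uniformly. It then writes the isomorphism down explicitly in coset coordinates: with $\{0,\dots,\ell-1\}$ a transversal of $L$, define $(i+x)^{\sigma}=i+x+m$ if $x\in L_o$ and $(i+x)^{\sigma}=i+x$ otherwise. Since $m\in 2L$ in both cases, this $\sigma$ is one of your proper $2$-colourings of $\Cay(\mathbb{Z}_{2m},D)$ (with $D\subseteq L_o$), invariant under $+m$, so the two presentations coincide and no separate reduction of condition~(1) to condition~(2) is needed.
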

\begin{proof}
(i) Set $L=K+H$ and $L_o=L\setminus 2L$. Since $2\nmid|H|$ and $2\mid|K|$, it follows that $2\mid|L|$ and $L_o=K_{o}+H$.
Consequently, either the inclusion $S+H\subseteq S\cup(K_o+H)$ or the inclusion $(S\setminus K_o)+H\subseteq S\cup K_o$ implies that $(S\setminus L_o)+H=S\setminus L_o$.

\medskip
(ii)~Since $2\parallel|H|$, $2\mid |K|$ and $m\notin H\setminus K_{o}$, it follows that $m\in H\cap K_o$ and $2\parallel|K|$. Consequently, conditions (1) and (2) in Lemma \ref{hmmtype1} imply $(S\setminus K_o)+H\subseteq S\cup K_o$ and $|H|>1$. Set $L=K+H$ and $L_o=L\setminus 2L$. Then $L_o=K_o+2H$ and we have
 \begin{equation*}
(S\setminus L_o)+2H\subseteq (S\setminus K_o)+H\subseteq S\cup K_o\subseteq S\cup L_o.
\end{equation*}
Observing that $\left((S\setminus L_o)+2H\right)\cap L_o=\emptyset$, the inclusion $(S\setminus L_o)+2H\subseteq S\cup L_o$ implies $(S\setminus L_o)+2H=S\setminus L_o$.

\medskip
(iii)~Since $m\in H\setminus K_{o}$ and $2\mid|K|$, we have $2\mid |H|$ and $4\mid |K|$. First, assume condition (1) of Lemma \ref{hmmtype1} holds. Let $L=K+H$ and $L_o=L\setminus 2L$. Since $H\cap K_o=\emptyset$, we conclude that the Sylow $2$-subgroup of $H$ is a proper subgroup of the Sylow 2-subgroup of $K$. Consequently, $L_o=L_o+H=K_o+H$. This together with the containment $S+H\subseteq S\cup(K_o+H)$ implies $(S\setminus L_o)+H=S\setminus L_o$. Since $m\in H$, it follows that $(S\setminus L_o)+m=S\setminus L_o$. Next, assume condition (2) of Lemma \ref{hmmtype1} holds, and set $L=K$. Then $(S\setminus L_o)+H\subseteq S\cup L_o$, which implies $(S\setminus L_o)+m\subseteq S\cup L_o$. Observing $m\in L\setminus L_o$, we get $L_o+m=L_o$. Consequently, $\left((S\setminus L_o)+m\right)\cap L_o=\emptyset$ and therefore $(S\setminus L_o)+m=S\setminus L_o$. Now we have proved that the equation $(S\setminus L_o)+m=S\setminus L_o$ always holds.

Let $\ell$ be the index of $L$ in $\mathbb{Z}_{2m}$. Then $\{0,1,\ldots,\ell-1\}$ forms a left transversal of $L$ in $\mathbb{Z}_{2m}$. Consequently, every element of
$\mathbb{Z}_{2m}$ can be uniquely expressed as $i+x$, where $i\in\{0,1,\ldots,\ell-1\}$ and $x\in L$. Define a permutation $\sigma$ on $\mathbb{Z}_{2m}$ by the rule:
\begin{equation*}
  (i+x)^{\sigma}=\left\{
              \begin{array}{ll}
       i+x+m, & \hbox{if}~x\in L_o; \\
    i+x, & \hbox{otherwise}.
      \end{array}
            \right.
\end{equation*}
Consider two adjacent vertices $i+x$ and $j+y$ in the graph $\Gamma$. We have
\begin{equation*}
(i-j)+(x-y)=(i+x)-(j+y)\in S.
\end{equation*}
By the definition of $\sigma$, we get
\begin{equation*}
(j+y)^{\sigma}-(i+x)^{\sigma}
=\left\{
   \begin{array}{ll}
     (i-j)+(x-y), & \hbox{if either}~x,y\in L_{o}~\hbox{or}~x,y\notin L_{o};\\
     (i-j)+(x-y)+m, & \hbox{otherwise.}
   \end{array}
 \right.
\end{equation*}
Recalling $(S\setminus L_o)=(S\setminus L_o)+m$, it follows that $(j+y)^{\sigma}-(i+x)^{\sigma}\in (S\setminus L_o)+m$ whenever $(i-j)+(x-y)\in S\setminus L_o$. Now assume  $(i-j)+(x-y)\in S\cap L_o$. Then $i=j$ and exactly one of $x$ and $y$ is contained in $L_o$. Consequently,
\begin{equation*}
(i+x)^{\sigma}-(j+y)^{\sigma}=x-y+m\in (S\cap L_o)+m.
\end{equation*}
Thus we always have $(i+x)^{\sigma}-(j+y)^{\sigma}\in S+m$, which means that $\sigma$ is an isomorphism from $\Gamma$ to $\Cay(\mathbb{Z}_{2m}, S+m)$.

\medskip
(iv) Since $4\mid|H|$ and $m\notin H\setminus K_{o}$, there exists $h\in H$ such that $m=2h\in K_o$. Consequently,  $2\parallel|K|$, and conditions (1) and (2) in Lemma \ref{hmmtype1} imply that $(S\setminus K_o)+\langle h\rangle\subseteq S\cup K_o$. By Lemma \ref{oldtonew}, the Cayley graph $\Cay\big(\mathbb{Z}_{2m},(S\setminus\{2h\})+m\big)$ has an automorphism fixing $0$ but moving $2h$.
\end{proof}

By applying Lemma \ref{equi}, we consolidate Lemmas \ref{hmmtype1},~\ref{hmmtype2} and Theorem \ref{ncon} into Theorem \ref{hmmtype}, restated below.
\begin{hmmtype}
Let $\Gamma=\Cay(\mathbb{Z}_{2m}, S)$ be a circulant graph.
Then $\Gamma$ is unstable if any of the following conditions is true:
\begin{enumerate}
  \item $(S\setminus K_o)+H=S\setminus K_o$ where $K$ is an even order subgroup of $\mathbb{Z}_{2m}$, $K_o=K\setminus 2K$ and $H$ is an odd order subgroup of $K$;
  \item $\Gamma\cong\Cay(\mathbb{Z}_{2m}, S+m)$;
  \item $\Cay\big(\mathbb{Z}_{2m},(S\setminus\{m\})+m\big)$ has an automorphism fixing $0$ but moving $m$.
\end{enumerate}
\end{hmmtype}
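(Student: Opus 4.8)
The plan is to prove the theorem by a short case analysis, treating each of the three conditions separately and reducing it to an instability criterion already established in the excerpt. Conditions (ii) and (iii) require essentially no new work. Condition (ii), namely $\Gamma\cong\Cay(\mathbb{Z}_{2m},S+m)$, is word-for-word the hypothesis of Lemma \ref{hmmtype2}, and condition (iii), that $\Cay(\mathbb{Z}_{2m},(S\setminus\{m\})+m)$ admits an automorphism fixing $0$ but moving $m$, is word-for-word the hypothesis of Theorem \ref{ncon}. In each case the instability of $\Gamma$ follows immediately by quoting the corresponding result, so the only genuine content sits in condition (i).

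For condition (i) I would exhibit it as a special case of Lemma \ref{hmmtype1}(2). Assume $(S\setminus K_o)+H=S\setminus K_o$ with $K$ an even-order subgroup of $\mathbb{Z}_{2m}$, $K_o=K\setminus 2K$, and $H$ a nontrivial odd-order subgroup of $K$. The first step is to note that the stated equality is strictly stronger than the inclusion demanded by Lemma \ref{hmmtype1}(2): since $S\setminus K_o\subseteq S$, we get $(S\setminus K_o)+H=S\setminus K_o\subseteq S\subseteq S\cup K_o$. The second step is to verify the side clause ``$|H|\neq 2$ or $4\mid|K|$''; this is automatic because $|H|$ is odd and nontrivial, hence $|H|\geq 3\neq 2$. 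Finally, $K$ has even order (so it is nontrivial) and $H$ is nontrivial, so both subgroups required by Lemma \ref{hmmtype1} are available; the lemma then applies and yields that $\Gamma$ is unstable. This completes all three cases.

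The point worth stressing is that there is no hidden obstacle in the forward direction proved here: each condition collapses in one or two lines onto a cited criterion. The genuinely laborious work has been front-loaded into Lemma \ref{equi} (and the twelve-subcase verification of Lemma \ref{oldtonew} behind it), which supplies the complementary fact that the three clean conditions of Theorem \ref{hmmtype} lose nothing relative to the more cumbersome hypotheses of Lemma \ref{hmmtype1}: the four exhaustive cases of Lemma \ref{equi}, split according to the $2$-part of $|H|$ and whether $m\in H\setminus K_o$, each land in exactly one of conditions (i)--(iii), with the ambient even-order subgroup taken to be $L=K+H$ and the odd-order subgroup in condition (i) taken to be $H$ or $2H$. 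That subsumption is what justifies calling Theorem \ref{hmmtype} a consolidation and optimization of Lemmas \ref{hmmtype1} and \ref{hmmtype2} together with Theorem \ref{ncon}, although it is not logically needed to establish the instability asserted here. The one piece of technical care in writing up condition (i) is to read $H$ as a \emph{nontrivial} subgroup, since a trivial $H$ renders the equality vacuous and conveys no information.
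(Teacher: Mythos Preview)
Your proposal is correct and matches the paper's (implicit) approach: the paper offers no separate proof of Theorem~\ref{hmmtype}, merely remarking that Lemma~\ref{equi} lets one ``consolidate'' Lemmas~\ref{hmmtype1},~\ref{hmmtype2} and Theorem~\ref{ncon} into it, so your explicit reductions---(ii) to Lemma~\ref{hmmtype2}, (iii) to Theorem~\ref{ncon}, and (i) to Lemma~\ref{hmmtype1}(2) via the trivial inclusion $(S\setminus K_o)+H=S\setminus K_o\subseteq S\cup K_o$ and $|H|$ odd---are exactly what is intended. Your closing remark that $H$ must be read as nontrivial is well taken and worth flagging, since the statement as printed does not exclude $|H|=1$.
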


\section{Concluding remark}
\label{sec8}

Recently, Jiang and Zhang proposed a method for constructing unstable graphs from bipartite graphs (\cite[Construction 3.1]{JZ2025}). They applied this construction to Cayley graphs, deriving a sufficient condition for the instability of Cayley graphs of groups containing a subgroup of index $2$ (\cite[Theorem 4.1]{JZ2025}). The following proposition focuses on the case of circulant graphs, refining \cite[Theorem 4.1]{JZ2025} for this specific class.
\begin{pro}
\label{preserving}
Let $\Gamma=\Cay(\mathbb{Z}_{2m}, S)$ be a circulant graph. Let $S_o=S\setminus 2\mathbb{Z}_{2m}$ and $S_e=S\cap 2\mathbb{Z}_{2m}$. Suppose the circulant graph $\Gamma_{o}:=\Cay(\mathbb{Z}_{2m}, S_o)$ admits two distinct automorphisms, $\sigma$ and $\rho$, with the following properties:
\begin{enumerate}
  \item both $\sigma$ and $\rho$ preserve $2\mathbb{Z}_{2m}$ set-wise;
  \item $y^\rho-x^\sigma\in S_e$ for any $x,y\in \mathbb{Z}_{2m}$ such that $y-x\in S_e$.
\end{enumerate}
Then $\Gamma$ is unstable.
\end{pro}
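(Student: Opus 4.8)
The plan is to manufacture a nontrivial TF-morphism $(\alpha,\beta)$ of $\Gamma$ out of the two automorphisms $\sigma,\rho$ of $\Gamma_{o}$, and then to invoke Lemma \ref{TF}. First I would record the ambient structure: writing $A=2\mathbb{Z}_{2m}$ and $B=\mathbb{Z}_{2m}\setminus 2\mathbb{Z}_{2m}$, the set $A$ is an index-two subgroup, every element of $S_o$ is odd, so $\Gamma_{o}$ is bipartite with parts $A$ and $B$, and by hypothesis (i) both $\sigma$ and $\rho$ preserve $A$ (hence also $B$) set-wise. Moreover, since $S$ is inverse-closed and $A$ is a subgroup, both $S_e=S\cap A$ and $S_o=S\setminus A$ are inverse-closed; this last fact will be used to symmetrise hypothesis (ii).

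Next I would define $\alpha$ and $\beta$ by a ``crossed'' parity rule:
\[
x^{\alpha}=\begin{cases}x^{\sigma},&x\in A,\\ x^{\rho},&x\in B,\end{cases}
\qquad
x^{\beta}=\begin{cases}x^{\rho},&x\in A,\\ x^{\sigma},&x\in B.\end{cases}
\]
Because $\sigma$ and $\rho$ each permute $A$ and $B$ separately, $\alpha$ and $\beta$ are genuine permutations of $\mathbb{Z}_{2m}$. Since $\sigma\ne\rho$, there is some $z$ with $z^{\sigma}\ne z^{\rho}$; whether $z\in A$ or $z\in B$, the rule forces $z^{\alpha}\ne z^{\beta}$, so $\alpha\ne\beta$ and the resulting morphism will be nontrivial.

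The core of the argument is to verify that $v-u\in S$ implies $v^{\beta}-u^{\alpha}\in S$, which I would do by splitting on the parity of the edge $u\sim_{\Gamma}v$. If $v-u\in S_o$, then $u$ and $v$ have opposite parities; the crossed rule is arranged precisely so that both endpoints are moved by the \emph{same} map (both by $\sigma$ when $u\in A$, both by $\rho$ when $u\in B$), and since $\sigma,\rho\in\Aut(\Gamma_{o})$ this returns an element of $S_o\subseteq S$. If $v-u\in S_e$, then $u$ and $v$ have the same parity: when $u,v\in A$ one computes $v^{\beta}-u^{\alpha}=v^{\rho}-u^{\sigma}$, which lies in $S_e$ directly by hypothesis (ii); when $u,v\in B$ one instead gets $v^{\sigma}-u^{\rho}$, and here I would apply hypothesis (ii) to the reversed pair together with the inverse-closedness of $S_e$ to land back in $S_e\subseteq S$.

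The only genuinely delicate point is this even--even case with both endpoints in $B$: hypothesis (ii) is stated asymmetrically (with $\sigma$ on the subtracted term and $\rho$ on the other), so it does not apply verbatim; one must pass to $u-v\in S_e$, apply (ii) to obtain $u^{\rho}-v^{\sigma}\in S_e$, and negate, using that $S_e$ is inverse-closed. Everything else is bookkeeping. Having produced a nontrivial TF-morphism, Lemma \ref{TF} gives that $\Gamma$ is unstable; concretely, $(\alpha,\beta)$ induces the automorphism $(v,0)\mapsto(v^{\alpha},0)$, $(v,1)\mapsto(v^{\beta},1)$ of $\Gamma\times K_2$, which fixes both layers yet differs on them, hence lies outside $\Aut(\Gamma)\times\mathbb{Z}_2$.
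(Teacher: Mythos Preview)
Your argument is correct. The paper does not actually supply a proof of Proposition~\ref{preserving}; it states the result as the circulant specialisation of \cite[Theorem~4.1]{JZ2025} and moves on. Your construction---splicing $\sigma$ and $\rho$ along the parity partition so that odd edges are handled by a single automorphism of $\Gamma_o$ and even edges fall under hypothesis~(ii)---is precisely the expected mechanism and matches how such ``crossed'' TF-morphisms are built elsewhere in the paper (e.g.\ Proposition~\ref{car}). One small remark: Lemma~\ref{TF} is stated only for connected non-bipartite graphs, so strictly speaking it does not apply as a black box here; however, your final sentence already gives the direct argument (the induced layer-preserving automorphism of $\Gamma\times K_2$ outside $\Aut(\Gamma)\times\mathbb{Z}_2$), which is exactly how the paper itself uses nontrivial TF-morphisms in the proofs of Theorem~\ref{dbundle} and Propositions~\ref{car}--\ref{rlexi}, so there is no gap.
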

\begin{remark}
\label{rem}
Let $\Gamma=\Cay(\mathbb{Z}_{2m}, S)$ be a circulant graph satisfying the condition in Lemma \ref{preserving}. Then the Cayley graph $\Gamma_{e}:=\Cay(\mathbb{Z}_{2m}, S_e)$ decomposes into a disjoint union of $\Gamma'_{e}$ and $\Gamma''_{e}$, where $\Gamma'_{e}$ and $\Gamma''_{e}$ are the
subgraphs of $\Gamma_{e}$ induced by
$2\mathbb{Z}_{2m}$ and $1+2\mathbb{Z}_{2m}$, respectively. Clearly, $\Gamma'_{e}:=\Cay(2\mathbb{Z}_{2m}, S_e)$ and $\Gamma''_{e}\cong\Gamma'_{e}$. Since both $\sigma$ and $\rho$ preserve $2\mathbb{Z}_{2m}$, they also preserve $1+2\mathbb{Z}_{2m}$. Condition (ii) in Lemma \ref{preserving} ensures that the restriction of $(\sigma,\rho)$ to $2\mathbb{Z}_{2m}$ is a TF-morphism of $\Gamma'_{e}$, and its restriction to $1+2\mathbb{Z}_{2m}$ is a TF-morphism of $\Gamma''_{e}$. Since $\sigma\ne\rho$, we conclude that the circulant graph $\Gamma'_{e}$ has a nontrivial TF-morphism and is therefore unstable. Thus Lemma \ref{preserving} establishes a method for constructing large unstable circulant graph from smaller ones.
\end{remark}
Jiang and Zhang demonstrated the universality of \cite[Construction 3.1]{JZ2025} in \cite[Proposition 3.2]{JZ2025}. The following proposition specializes  \cite[Proposition 3.2]{JZ2025} to the case of circulant graphs.
\begin{pro}
\label{preserving2}
Let $\Gamma=\Cay(\mathbb{Z}_{2m}, S)$ be a circulant graph admitting a nontrivial TF-morphism $(\alpha,\beta)$ such that both $\alpha$ and $\beta$ preserve $2\mathbb{Z}_{2m}$  set-wise. Then $\Gamma$ satisfies the conditions in Proposition \ref{preserving}.
\end{pro}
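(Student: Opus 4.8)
The plan is to produce explicitly the two automorphisms $\sigma$ and $\rho$ of $\Gamma_{o}=\Cay(\mathbb{Z}_{2m},S_o)$ demanded by Proposition \ref{preserving}, manufacturing them out of the given TF-morphism $(\alpha,\beta)$. The starting observation is a parity bookkeeping: since both $\alpha$ and $\beta$ preserve $2\mathbb{Z}_{2m}$ set-wise they also preserve its coset $1+2\mathbb{Z}_{2m}$, so each of them fixes the parity of every vertex. Hence for all $x,y$ the difference $y^{\beta}-x^{\alpha}$ has the same parity as $y-x$. Feeding this into the equivalence $y-x\in S\Leftrightarrow y^{\beta}-x^{\alpha}\in S$ supplied by Remark \ref{rtfs} (ii) and splitting $S$ into its odd part $S_o$ and even part $S_e$, I obtain two separate equivalences: $y-x\in S_o\Leftrightarrow y^{\beta}-x^{\alpha}\in S_o$ (valid for all $x,y$) and $y-x\in S_e\Leftrightarrow y^{\beta}-x^{\alpha}\in S_e$ (valid for distinct $x,y$). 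These carry, respectively, the adjacency information of $\Gamma_{o}$ and exactly the data required by condition (ii) of Proposition \ref{preserving}.

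The central device is to notice that $\Gamma_{o}$ is \emph{bipartite}, with bipartition $\{2\mathbb{Z}_{2m},\,1+2\mathbb{Z}_{2m}\}$, because every element of $S_o$ is odd; this is what lets me turn the two-fold datum $(\alpha,\beta)$ into genuine automorphisms. Concretely I would set $\sigma$ equal to $\alpha$ on $2\mathbb{Z}_{2m}$ and to $\beta$ on $1+2\mathbb{Z}_{2m}$, and $\rho$ equal to $\beta$ on $2\mathbb{Z}_{2m}$ and to $\alpha$ on $1+2\mathbb{Z}_{2m}$. Both are permutations of $\mathbb{Z}_{2m}$ preserving $2\mathbb{Z}_{2m}$, so condition (i) of Proposition \ref{preserving} is immediate, and $\sigma\ne\rho$ follows from $\alpha\ne\beta$ by testing a vertex $v$ with $v^{\alpha}\ne v^{\beta}$ in whichever part it lies.

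To check that $\sigma\in\Aut(\Gamma_{o})$ I would verify that it preserves adjacency in one direction and then invoke the standard finiteness argument (an adjacency-preserving permutation of a finite graph is an automorphism). For an edge $\{u,w\}$ with $u\in 2\mathbb{Z}_{2m}$ and $w\in 1+2\mathbb{Z}_{2m}$, the odd-part equivalence applied with $x=u$, $y=w$ gives $w^{\beta}-u^{\alpha}\in S_o$, that is, $\sigma(u)\sim_{\Gamma_{o}}\sigma(w)$; the verification for $\rho$ is identical after swapping the two endpoints and using that $S_o$ is inverse-closed. Finally condition (ii) is read off from the even-part equivalence: if $y-x\in S_e$ then $x$ and $y$ share a part, and in each of the two cases $y^{\rho}-x^{\sigma}$ equals either $y^{\beta}-x^{\alpha}$ or $y^{\alpha}-x^{\beta}$, both of which lie in $S_e$ by that equivalence (once more using $S_e=-S_e$). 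This exhibits $\sigma,\rho$ satisfying all the hypotheses of Proposition \ref{preserving}, which is precisely the assertion.

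The step I expect to be the genuine crux is the second one: recognising that the bipartiteness of $\Gamma_{o}$ forces every edge to straddle the two parts, which is what permits the \emph{splicing} of $\alpha$ and $\beta$ across the parts into honest automorphisms; after that the remaining verifications are routine. A minor but essential point to keep straight is that the even-part equivalence may fail at $x=y$ (the difference $x^{\beta}-x^{\alpha}$ can accidentally land in $S_e$), so it must be applied only to the distinct pairs coming from $y-x\in S_e$, which is automatic since $0\notin S$.
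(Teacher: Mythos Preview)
Your argument is correct. The splicing of $\alpha$ and $\beta$ across the bipartition of $\Gamma_o$ is exactly the right device, and your verifications of conditions (i) and (ii) of Proposition~\ref{preserving} go through as written; the parity bookkeeping together with the symmetry $S_o=-S_o$, $S_e=-S_e$ covers all cases cleanly.

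As for comparison: the paper does not actually prove Proposition~\ref{preserving2} here. It simply records the statement as the specialization to circulant graphs of \cite[Proposition~3.2]{JZ2025} and defers the argument to that reference. Your write-up is therefore a self-contained direct proof, whereas the paper relies on an external citation. The construction you give---defining $\sigma$ and $\rho$ by gluing $\alpha$ and $\beta$ along the two parity classes---is the natural one and is almost certainly what underlies the cited result as well; the advantage of your version is that it makes the mechanism completely explicit within the circulant setting and requires nothing beyond Remark~\ref{rtfs}(ii) and the bipartiteness of $\Gamma_o$.
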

Proposition \ref{preserving} serves as a generalization of Lemma \ref{hmmtype3}, while Proposition \ref{preserving2}  establishes that unstable circulant graphs $\Gamma=\Cay(\mathbb{Z}_{2m}, S)$ can be categorized into two distinct types:
\begin{description}
  \item[Type I.] $\Gamma$ satisfies the conditions outlined in Proposition \ref{preserving};
  \item[Type II.] For every nontrivial TF-morphism $(\alpha,\beta)$ of $\Gamma$, either $\alpha$ or $\beta$ fails to preserve $2\mathbb{Z}_{2m}$.
\end{description}

\medskip
We propose the following recursive scheme for classifying unstable circulant graphs of order $2m$:
\begin{description}
  \item[Step 1.] Classify unstable circulant graphs of Type II;
  \item[Step 2.] Classify unstable circulant graphs of order $m$;
  \item[Step 3.] Use the graphs obtained in Step 2 to construct all graphs satisfying Proposition \ref{preserving}.
\end{description}

As an immediate consequence of Remark \ref{rem}, we obtain the following proposition.
\begin{pro}
\label{type2}
Let $\Gamma=\Cay(\mathbb{Z}_{2m}, S)$ be an unstable circulant graph. If the subgraph $\Cay(2\mathbb{Z}_{2m}, S\cap2\mathbb{Z}_{2m})$ of $\Gamma$ is stable, then $\Gamma$ is of Type II.
\end{pro}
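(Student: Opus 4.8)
The plan is to establish the statement by directly negating the defining condition of Type II. Recall that $\Gamma$ is of Type II precisely when no nontrivial TF-morphism $(\alpha,\beta)$ of $\Gamma$ has both $\alpha$ and $\beta$ preserving $2\mathbb{Z}_{2m}$ set-wise. So I would argue by contradiction: assume $\Gamma$ is \emph{not} of Type II, which by definition furnishes a nontrivial TF-morphism $(\alpha,\beta)$ of $\Gamma$ with both $\alpha$ and $\beta$ preserving $2\mathbb{Z}_{2m}$, and then show that the subgraph $\Cay(2\mathbb{Z}_{2m}, S\cap 2\mathbb{Z}_{2m})$ is forced to be unstable, contradicting the hypothesis.

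The key steps are short and follow in order. First, from the assumed nontrivial TF-morphism $(\alpha,\beta)$ with the preservation property, Proposition \ref{preserving2} immediately yields that $\Gamma$ satisfies the conditions in Proposition \ref{preserving}. Second, I would invoke Remark \ref{rem}, which treats exactly this situation: for a circulant graph satisfying the conditions of Proposition \ref{preserving}, the restriction of the associated automorphism pair $(\sigma,\rho)$ to $2\mathbb{Z}_{2m}$ is a nontrivial TF-morphism of $\Gamma'_e := \Cay(2\mathbb{Z}_{2m}, S_e)$, whence $\Gamma'_e$ is unstable. Since $S_e = S\cap 2\mathbb{Z}_{2m}$, the graph $\Gamma'_e$ is identical to the subgraph named in the statement, so its instability contradicts the standing hypothesis. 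This contradiction shows that no nontrivial TF-morphism of $\Gamma$ can preserve $2\mathbb{Z}_{2m}$ in both coordinates, i.e.\ $\Gamma$ is of Type II.

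I do not expect a genuine obstacle here, as the result is essentially a repackaging of Remark \ref{rem} together with Proposition \ref{preserving2}. The only points warranting care are bookkeeping: matching the notation $S_e = S\cap 2\mathbb{Z}_{2m}$ so that $\Gamma'_e$ coincides with the subgraph in the statement, and noting that the implication ``a nontrivial TF-morphism forces instability'' used inside Remark \ref{rem} is the unconditional direction of the TF-morphism/stability correspondence, which does not require the non-bipartiteness hypothesis of Lemma \ref{TF}. Consequently the argument remains valid even if $\Gamma'_e$ happens to be bipartite or disconnected, and the hypothesis that $\Gamma$ itself is unstable is used only to place $\Gamma$ within the Type I / Type II dichotomy.
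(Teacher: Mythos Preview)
Your proposal is correct and matches the paper's intent: the paper does not write out a proof but merely states that Proposition~\ref{type2} is ``an immediate consequence of Remark~\ref{rem}'', and your contrapositive argument (not Type~II $\Rightarrow$ Type~I via Proposition~\ref{preserving2} $\Rightarrow$ $\Gamma'_e$ unstable via Remark~\ref{rem}) is exactly the reasoning this sentence encapsulates. Your closing observations about the unconditional direction of the TF-morphism/instability implication and the role of the hypothesis that $\Gamma$ is unstable are accurate and welcome clarifications.
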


\medskip
The classification of all nontrivially unstable circulant graphs $\Gamma=\Cay(\mathbb{Z}_{2m}, S)$ hinges on identifying those of Type II. As demonstrated in \cite{JZ2025}, $\Gamma$ falls under Type I if it satisfies conditions (i), (ii), or (iii) from Lemma \ref{wtype}.  Additionally, if $m$ is even and condition (iv) of Lemma \ref{wtype} holds, $\Gamma$ is also of Type I.  It was proved in \cite{HMM2023} that when $m$ is an odd prime power, $\Gamma$ is unstable if and only if $\Gamma$ meets condition (i) of Lemma \ref{wtype} or $\Gamma\cong\Cay(\mathbb{Z}_{2m}, S+m)$.
Based on these findings, we propose the following conjecture:
\begin{conj}
Let $m$ be an odd integer. If $\Gamma=\Cay(\mathbb{Z}_{2m}, S)$ is a nontrivially unstable circulant graph of Type II, then $\Gamma\cong\Cay(\mathbb{Z}_{2m}, S+m)$.
\end{conj}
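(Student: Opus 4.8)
The plan is to reduce the conjecture to a single concrete statement about two-fold morphisms and then to a fusion question in the automorphism group of the canonical double cover. Throughout write $\tau_m$ for the translation $x\mapsto x+m$ of $\mathbb{Z}_{2m}$, and recall that for $m$ odd the element $m$ is the unique involution of $\mathbb{Z}_{2m}$, so $\tau_m$ interchanges the two cosets of $2\mathbb{Z}_{2m}$. First I would record the following equivalence, which turns the target isomorphism into an internal property of $\Gamma$: the graph $\Gamma$ is isomorphic to $\Cay(\mathbb{Z}_{2m},S+m)$ if and only if $\Gamma$ admits a nontrivial TF-morphism $(\alpha,\beta)$ with $v^{\beta}=v^{\alpha}+m$ for every $v\in\mathbb{Z}_{2m}$. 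For the ``if'' direction one checks directly that under this hypothesis $u\sim_{\Gamma}v\Leftrightarrow v^{\alpha}-u^{\alpha}\in S-m=S+m$ (using $-m=m$), so $\alpha$ is an isomorphism $\Gamma\to\Cay(\mathbb{Z}_{2m},S+m)$; conversely, given such an isomorphism $\phi$, the pair $(\phi,\phi\tau_m)$ is a TF-morphism of the required form. Since $\Gamma$ is nontrivially unstable, Lemma \ref{TF} already supplies some nontrivial TF-morphism $(\alpha,\beta)$, so the whole problem becomes: upgrade an arbitrary nontrivial TF-morphism to one whose defect $\alpha^{-1}\beta$ is exactly $\tau_m$.

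Next I would exploit $m$ odd through the canonical double cover. Writing $BX=\Gamma\times K_2=\Cay(\mathbb{Z}_{2m}\times\mathbb{Z}_2,\,S\times\{1\})$, the Sylow $2$-subgroup of the translation group is $V=\{0,m\}\times\mathbb{Z}_2\cong(\mathbb{Z}_2)^2$, and its two part-swapping involutions are $\iota_1=(0,1)$ and $\iota_2=(m,1)$. A short computation shows $BX/\langle\iota_1\rangle=\Gamma$ while $BX/\langle\iota_2\rangle=\Cay(\mathbb{Z}_{2m},S+m)$, so these two graphs are precisely the two translation quotients of the common connected bipartite cover $BX$. By the standard lifting lemma for connected double covers, an isomorphism between the quotients lifts to an automorphism of $BX$ conjugating the respective deck involutions; hence $\Gamma\cong\Cay(\mathbb{Z}_{2m},S+m)$ if and only if $\iota_1$ and $\iota_2$ are conjugate in $\Aut(BX)$. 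This is the structural heart of the argument: the conjecture is equivalent to the fusion of two specific involutions of $V$ inside $\Aut(BX)$. Moreover, a nontrivial TF-morphism $(\alpha,\beta)$ is exactly a part-preserving automorphism $g_{\alpha,\beta}$ of $BX$ off the diagonal, and one computes that $g_{\alpha,\beta}^{-1}\iota_1 g_{\alpha,\beta}=\iota_2$ precisely when $\alpha^{-1}\beta=\tau_m$, so the two reformulations coincide.

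I would then bring in the Type II hypothesis. By Proposition \ref{preserving2}, $\Gamma$ is of Type I exactly when some nontrivial TF-morphism has both components preserving $2\mathbb{Z}_{2m}$; Type II is the negation, so every nontrivial TF-morphism $(\alpha,\beta)$ has at least one of $\alpha,\beta$ reversing parity somewhere. Since $\tau_m$ reverses parity globally, a TF-morphism with $\alpha^{-1}\beta=\tau_m$ is automatically of Type II, so the hypothesis is consistent with the target and, crucially, rules out the competing Type I mechanism of Proposition \ref{preserving}. The plan here is to analyze the induced action of $\alpha$ and $\beta$ on the two cosets of $2\mathbb{Z}_{2m}$. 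Using $\alpha,\beta\in\Aut(S(\Gamma))$ (Lemma \ref{tfsk}) and the fact that $S(\Gamma)=\Cay(\mathbb{Z}_{2m},T)$ is itself circulant, I would aim to prove that each of $\alpha,\beta$ either fixes or totally interchanges these two cosets; Type II then forces exactly one of them---say $\beta$---to interchange them, after which one corrects $\beta$ by the translation $\tau_m$ and an element of $\Aut(\Gamma)$ to reach the normal form $\alpha^{-1}\beta=\tau_m$. The odd-order input enters through the even subgraph $\Cay(2\mathbb{Z}_{2m},S\cap 2\mathbb{Z}_{2m})\cong\Cay(\mathbb{Z}_m,\cdot)$, which, having odd order, is stable by the result on odd-order circulants; via Proposition \ref{type2} and Remark \ref{rem} this controls which parity patterns can occur and should drive an induction along the paper's recursive scheme (Steps 1--3).

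The main obstacle is the step asserting that $\alpha,\beta\in\Aut(S(\Gamma))$ must respect the coset partition of $2\mathbb{Z}_{2m}$ up to a global swap: this amounts to showing that $2\mathbb{Z}_{2m}$ is a block system for $\Aut(S(\Gamma))$, which is not automatic and lies at the core of the difficulty. Equivalently, in the double-cover picture, one must control the fusion of the involutions of $V$ inside $\Aut(BX)$, and establishing that $\iota_1$ and $\iota_2$ are actually fused---rather than merely that $\iota_1$ has extra conjugates, which instability alone guarantees---is exactly the hard part, where the argument interfaces with the deep and only partially understood theory of automorphism groups and isomorphisms of circulant graphs. I expect that, absent an additional hypothesis forcing block structure (or a CI-type property of $\mathbb{Z}_{2m}$), this fusion cannot be deduced from the existence of a single parity-mixing symmetry, which is precisely why the statement is proposed as a conjecture rather than a theorem.
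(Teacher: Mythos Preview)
The statement you are addressing is a \emph{conjecture} in the paper; the paper offers no proof of it, so there is nothing to compare your attempt against. Your proposal is itself not a proof but an outline that correctly isolates the structural heart of the question (fusion of $\iota_1$ and $\iota_2$ in $\Aut(BX)$, equivalently the existence of a TF-morphism with defect $\tau_m$) and honestly flags the genuine obstruction: you need $2\mathbb{Z}_{2m}$ to be a block for $\Aut(S(\Gamma))$, or equivalently control over the fusion pattern in $\Aut(BX)$, and neither the Type~II hypothesis nor Lemma~\ref{tfsk} delivers this. That gap is precisely why the paper records the statement as a conjecture rather than a theorem, and your closing paragraph identifies it accurately.
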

If a nontrivially unstable circulant graph $\Gamma=\Cay(\mathbb{Z}_{2m}, S)$ satisfies condition (i) of Theorem \ref{hmmtype}, then $\Gamma$ admits a TF-morphism $(\alpha,\beta)$ defined as follows:
\begin{equation*}
  x^\alpha=\left\{
             \begin{array}{ll}
               x+h, & \hbox{if}~x\in K_o; \\
               x, & \hbox{otherwise,}
             \end{array}
           \right.
~~~\mbox{and}~~~
 x^\beta=\left\{
             \begin{array}{ll}
               x+h, & \hbox{if}~x\in 2K; \\
                x, & \hbox{otherwise,}
             \end{array}
           \right.
\end{equation*}
where $h$ is a nonidentity element of $H$. Obviously, both $\alpha$ and $\beta$ preserve $2\mathbb{Z}_{2m}$ set-wise. By Lemma \ref{preserving2}, $\Gamma$ is of Type I.

If $\Gamma=\Cay(\mathbb{Z}_{4h}, S)$ is a nontrivially unstable circulant graph satisfying the condition in Lemma \ref{oldtonew}, then $\Gamma$ admits a
TF-morphism $(\alpha,\beta)$ defined as follows:
\begin{equation*}
  x^\alpha=\left\{
             \begin{array}{ll}
               x+h, & \hbox{if}~x\in K_o; \\
               x-h, & \hbox{if}~x\in h+K_o; \\
               x, & \hbox{otherwise,}
             \end{array}
           \right.
~~~\mbox{and}~~~
 x^\beta=\left\{
             \begin{array}{ll}
               x+h, & \hbox{if}~x\in 2K; \\
               x-h, & \hbox{if}~x\in h+2K; \\
               x, & \hbox{otherwise.}
             \end{array}
           \right.
\end{equation*}
Furthermore, if $h$ is even, then both $\alpha$ and $\beta$ preserve $2\mathbb{Z}_{4h}$ set-wise. By Lemma \ref{preserving2}, $\Gamma$ is of Type I. If $h$ is odd, the $\Gamma$ may be of  Type II, as illustrated in the following example.

\begin{exam}
\label{20II}
Let $\Gamma=\Cay(\mathbb{Z}_{20},\{\pm1,\pm4,\pm9,10\})$ and $\Gamma'_{e}=\Cay(2\mathbb{Z}_{20},\{\pm4,10\})$. Then $\Gamma$ satisfies the condition in Lemma \ref{oldtonew} and is therefore unstable. Based on the classification of unstable circulant graph of twice an odd prime
\cite[Theorem 5.1.]{HMM2021}, we conclude that $\Gamma'_{e}$ is stable. It follows from Remark \ref{rem} that $\Gamma$ is  of  Type II.
\end{exam}

We end the paper with the following two conjectures.
\begin{conj}
Let $h$ be an odd integer. If $\Gamma=\Cay(\mathbb{Z}_{4h}, S)$ is a nontrivially unstable circulant graph of Type II, then $\Cay\big(\mathbb{Z}_{4h},(S\setminus\{2h\})+2h\big)$ has an automorphism fixing $0$ but moving $2h$.
\end{conj}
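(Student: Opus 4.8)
The plan is to recast the conclusion as the existence of a single, very rigidly structured nontrivial TF-morphism of $\Gamma$, and then to try to extract such a morphism from the instability hypothesis using the Type II restriction. Throughout write $m=2h$ and let $t_m\colon x\mapsto x+m$ denote the translation by the unique involution of $\mathbb{Z}_{4h}$; note that $t_m\in\Aut(\Gamma)$ and that $t_m$ preserves the parity subgroup $2\mathbb{Z}_{4h}$ since $m$ is even. The first step is the reduction: I claim the desired conclusion is equivalent to the assertion that $\Gamma$ admits a nontrivial TF-morphism $(\alpha,\beta)$ satisfying the relation $\beta=t_m\alpha t_m$ together with $0^\alpha\neq 0^\beta$.

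To see this, run the computation in the proof of Theorem \ref{ncon} in both directions. Given an automorphism $\sigma$ of $\Cay(\mathbb{Z}_{4h},(S\setminus\{m\})+m)$ with $0^\sigma=0$, setting $\alpha=\sigma t_m$ and $\beta=t_m\sigma$ yields a TF-morphism of $\Gamma$, and a direct check gives $t_m\alpha t_m=\beta$ while $0^\alpha=0^\sigma+m=m$ and $0^\beta=m^\sigma$; hence $\sigma$ moves $m$ precisely when $0^\alpha\neq 0^\beta$. Conversely, from a TF-morphism with $\beta=t_m\alpha t_m$ one recovers $\sigma:=\alpha t_m$, which fixes $0$ once we normalise $0^\alpha=m$. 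Such a normalisation is available because translations lie in $\Aut(\Gamma)$, so a TF-morphism may be post-composed with any translation, and conjugation by a translation commutes with $t_m$, so the relation $\beta=t_m\alpha t_m$ is preserved under this normalisation. This makes the reduction clean, and the special morphism above is exactly the object I would aim to construct.

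Next I would bring in the hypotheses. Since $\Gamma$ is nontrivially unstable it is connected, non-bipartite and $R$-thin, so by Lemma \ref{TF} it carries \emph{some} nontrivial TF-morphism $(\alpha_0,\beta_0)$ with $\alpha_0\neq\beta_0$. The Type II hypothesis states that for every such pair at least one of $\alpha_0,\beta_0$ fails to preserve $2\mathbb{Z}_{4h}$. Because $h$ is odd, $\mathbb{Z}_{4h}\cong\mathbb{Z}_4\times\mathbb{Z}_h$ has cyclic Sylow $2$-subgroup $\langle h\rangle$ of order $4$ with unique involution $m$, so parity is governed by a single $\mathbb{Z}_4$-coordinate; moreover any candidate morphism with $\beta=t_m\alpha t_m$ must itself violate parity whenever $\Gamma$ is Type II, since $t_m$-conjugation preserves parity. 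I would then analyse how $(\alpha_0,\beta_0)$ acts on this $2$-local data, and in parallel examine the even part $\Gamma'_{e}=\Cay(2\mathbb{Z}_{4h},S\cap 2\mathbb{Z}_{4h})$, a circulant of twice-odd order $2h$: by Proposition \ref{type2} and Remark \ref{rem}, being Type II is tightly linked to the stability (rigidity) of $\Gamma'_{e}$, about which the classification results for circulants of twice-odd-prime order and of odd order provide leverage.

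The decisive and hardest step is to convert the parity-violating morphism supplied by Type II into one of the rigid conjugate form $\beta=t_m\alpha t_m$. The aim would be to show that, once the even part is forced to be rigid, the instability of $\Gamma$ can only be produced by a parity-exchanging symmetry of the odd connection set that is compelled to intertwine with $t_m$ in exactly the conjugate manner above. The obstruction is that no general principle guarantees an arbitrary parity-violating TF-morphism can be normalised to this form: controlling \emph{all} TF-morphisms of a circulant of order $4h$ is tantamount to describing $\Aut(\Gamma\times K_2)$ on the \emph{non-cyclic} group $\mathbb{Z}_{4h}\times\mathbb{Z}_2$, and the classification of unstable circulants of twice-odd order is itself incomplete beyond the prime case. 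It is precisely this missing structural control that I expect to be the main obstacle, and presumably the reason the statement remains only a conjecture.
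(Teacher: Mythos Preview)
The statement you are attempting is labelled a \emph{conjecture} in the paper; no proof is offered there, so there is nothing to compare your argument against on the level of a completed proof. Your proposal is, appropriately, not a proof but a strategy together with an honest diagnosis of where it stalls.

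Your reduction step is correct and is exactly the reversal of the construction in the paper's proof of Theorem~\ref{ncon}: an automorphism $\sigma$ of $\Cay(\mathbb{Z}_{4h},(S\setminus\{m\})+m)$ fixing $0$ corresponds, via $\alpha=\sigma t_m$ and $\beta=t_m\sigma$, to a TF-morphism $(\alpha,\beta)$ of $\Gamma$ with $\beta=t_m\alpha t_m$; the condition $m^\sigma\neq m$ becomes $0^\alpha\neq 0^\beta$ after the translation normalisation, and the relation $\beta=t_m\alpha t_m$ is preserved under post-composition by translations since translations commute. So the conjecture is equivalent to: every nontrivially unstable Type~II circulant of order $4h$ with $h$ odd admits a nontrivial TF-morphism of this $t_m$-conjugate form.

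The genuine gap you identify is real and is the reason the statement is left open. Type~II guarantees only that \emph{every} nontrivial TF-morphism violates parity; it does not supply any mechanism for forcing an arbitrary parity-violating TF-morphism into the rigid shape $\beta=t_m\alpha t_m$. The paper's only evidence in this direction is Lemma~\ref{oldtonew}, which shows that a specific combinatorial hypothesis on $S$ produces such a $\sigma$, together with Example~\ref{20II}, which exhibits a Type~II graph where that hypothesis holds. Bridging from ``some parity-violating TF-morphism exists'' to ``one of conjugate form exists'' would require control over $\Aut(\Gamma\times K_2)$ on the non-cyclic group $\mathbb{Z}_{4h}\times\mathbb{Z}_2$ that is not currently available, and your assessment that this is the essential obstruction is accurate.
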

\begin{conj}
Let $n$ be a positive integer divisible by $8$. Then every nontrivially unstable circulant graph $\Gamma=\Cay(\mathbb{Z}_{n}, S)$ is of Type I.
\end{conj}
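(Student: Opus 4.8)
The plan is to reduce the statement to a question about a single finite group equipped with an involutory automorphism, and then to isolate the one configuration that the hypothesis $8\mid n$ must exclude. Write $n=2m$; since $8\mid n$ we have $4\mid m$, so $m$ is even, and put $E=2\mathbb{Z}_{n}$. As $\Gamma$ is nontrivially unstable it is connected, non-bipartite and $R$-thin, so $S\cap E\neq\emptyset$ and, by Lemma \ref{TF}, $\Gamma$ carries a nontrivial TF-morphism. By Proposition \ref{preserving2} it suffices to exhibit one nontrivial TF-morphism $(\alpha,\beta)$ of $\Gamma$ with both $\alpha$ and $\beta$ fixing $E$ set-wise; any such pair forces $\Gamma$ into Type I. Thus the whole problem is to manufacture a coset-preserving nontrivial TF-morphism out of an arbitrary one.

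First I would package all TF-morphisms into a group. The set $T$ of TF-morphisms of $\Gamma$ is closed under componentwise composition and under $(\alpha,\beta)\mapsto(\beta,\alpha)$; because $\Gamma$ is $R$-thin, $(\mathrm{id},\beta)\in T$ implies $\beta=\mathrm{id}$, so the first projection $\pi_{1}\colon T\to\Sym(\mathbb{Z}_{n})$ is injective with image a group $A\geq\mathbb{Z}_{n}$ (each translation $t_{i}$ enters as the diagonal pair $(t_{i},t_{i})$). Setting $\beta=\mu(\alpha)$ whenever $(\alpha,\beta)\in T$ defines an involutory automorphism $\mu$ of $A$ whose fixed subgroup is exactly $\Aut(\Gamma)$, and $\Gamma$ is unstable precisely when $\mu\neq\mathrm{id}$. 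By Lemma \ref{tfsk} we have $A\leq\Aut(S(\Gamma))$, and since the Boolean square of a circulant is again a circulant ($B(\Gamma)=\Cay(\mathbb{Z}_{n},(S-S)\setminus\{0\})$) so is its Cartesian skeleton $S(\Gamma)$. The first genuine step is to prove, using $8\mid n$, that the partition $\{E,\,1+E\}$ into the two cosets of $2\mathbb{Z}_{n}$ is a block system for $A$; one would read this off from the $2$-adic layering $E\supset 4\mathbb{Z}_{n}$ of the circulant $S(\Gamma)$, the extra factor of $2$ being what rigidifies the even/odd split. Granting this, there is a homomorphism $\epsilon\colon A\to\mathbb{Z}_{2}$ recording whether an element swaps the two cosets, with $\epsilon(t_{1})=1$; the coset-preserving TF-morphisms are those in $Q:=\ker\epsilon\cap\mu(\ker\epsilon)$, and Type I is exactly the assertion $Q\not\subseteq\Aut(\Gamma)$.

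The group-theoretic reduction then proceeds as follows. If $\epsilon\circ\mu=\epsilon$, then $\mu(\ker\epsilon)=\ker\epsilon$, so $Q=\ker\epsilon$ has index $2$ in $A$; were $Q\subseteq\Aut(\Gamma)=\mathrm{Fix}(\mu)$, then since $t_{1}\in\mathrm{Fix}(\mu)$ and $t_{1}\notin\ker\epsilon$ we would get $\mathrm{Fix}(\mu)\supseteq\langle Q,t_{1}\rangle=A$, contradicting $\mu\neq\mathrm{id}$; hence $Q\not\subseteq\Aut(\Gamma)$ and $\Gamma$ is Type I. In the remaining ``mixed'' case $\epsilon\circ\mu\neq\epsilon$, set $M=\ker(\epsilon+\epsilon\circ\mu)$; one checks $\mathrm{Fix}(\mu)\subseteq M$, that $Q=\ker(\epsilon|_{M})$ has index $2$ in $M$, and that $t_{1}\in M\setminus Q$. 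Consequently $\Gamma$ fails to be Type I if and only if $Q\subseteq\mathrm{Fix}(\mu)$, which together with $t_{1}$ forces $\mathrm{Fix}(\mu)=\langle Q,t_{1}\rangle=M$, i.e. $[A:\Aut(\Gamma)]=2$ with $\mu$ a mixed-parity involution. Thus, modulo the block-system step, the conjecture collapses to a single statement: when $8\mid n$, a nontrivially unstable circulant on $\mathbb{Z}_{n}$ cannot have its two-fold automorphism group equal to an index-$2$ extension of $\Aut(\Gamma)$ by such a mixed-parity involution.

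This last statement is the main obstacle, and it is exactly where the hypothesis must bite: the order-$20$ graph of Example \ref{20II}, where $4\parallel n$, realizes precisely this minimal mixed instability and is Type II, so any proof must use the full strength of $8\mid n$ rather than merely $4\mid n$. My plan of attack is to exploit the regular cyclic subgroup $\mathbb{Z}_{n}\leq A$: in the putative extension $A=\Aut(\Gamma)\cup\gamma\,\Aut(\Gamma)$ the element $\mu(\gamma)\gamma^{-1}$ is an involution of $\Aut(\Gamma)$, and I would track the action of $\mu$ on the Sylow $2$-subgroup of $\mathbb{Z}_{n}$ (cyclic of order at least $8$) together with condition (ii) of Proposition \ref{preserving} restricted to $S\cap E$, aiming to show that a mixed-parity $\mu$ would force an automorphism of the even subgraph $\Cay(2\mathbb{Z}_{n},S\cap E)$ incompatible with its being stable, contradicting Proposition \ref{type2}. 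Controlling non-affine TF-morphisms through $\Aut(S(\Gamma))$ precisely enough to carry out both the block-system step and this final exclusion is the crux, and is why the statement remains conjectural.
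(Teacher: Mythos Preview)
The statement is a \emph{conjecture} in the paper, not a theorem; the paper offers no proof whatsoever, merely recording it as open at the end of Section~\ref{sec8}. There is therefore nothing to compare your attempt against.

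Your proposal is, by your own admission, not a proof either. The group-theoretic packaging (the TF-group $A$ with its involutory automorphism $\mu$, the parity character $\epsilon$, and the case split on whether $\epsilon\circ\mu=\epsilon$) is sound, and your reduction---granted the block-system claim---to the single residual configuration $[A:\Aut(\Gamma)]=2$ with a mixed-parity $\mu$ is correct. But both of the steps you flag as unproved are genuine gaps, and neither is close to being filled by what you have written:
\begin{itemize}
\item The assertion that $\{E,1+E\}$ is a block system for $A$ when $8\mid n$ is not justified. The phrase ``read this off from the $2$-adic layering of $S(\Gamma)$'' is not an argument: the Cartesian skeleton of a circulant can have a much larger automorphism group than $\Gamma$ itself, and there is no general mechanism forcing $\Aut(S(\Gamma))$ to respect the parity partition, even with the extra factor of $2$.
\item The exclusion of the index-$2$ mixed-parity case is the heart of the conjecture, and ``track the action of $\mu$ on the Sylow $2$-subgroup of $\mathbb{Z}_n$'' together with an appeal to Proposition~\ref{type2} is a hope, not a strategy. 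Proposition~\ref{type2} only gives a one-way implication (stable even subgraph $\Rightarrow$ Type~II), so it cannot by itself produce the contradiction you describe.
\end{itemize}
In short, your outline correctly isolates where the difficulty lies and is consistent with the paper's framework, but it does not go beyond restating that the conjecture is open.
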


\bigskip
\noindent {\textbf{Acknowledgements}}
\medskip

The author would like to thank Binzhou Xia for his valuable  suggestions, which  improved the clarity and presentation of this paper. This work was supported by the Natural Science Foundation of Chongqing (CSTB2022NSCQMSX1054).
{\small
\end{document}